\newcommand{\n}{\noindent}
\newcommand{\vp}{\varepsilon}
\newcommand{\bb}[1]{\mathbb{#1}}
\newcommand{\cl}[1]{\mathcal{#1}}
\newcommand{\ovl}{\overline}
\theoremstyle{plain}
\newtheorem{thm}{Theorem}[section]
\newtheorem{lem}[thm]{Lemma}
\newtheorem{pro}[thm]{Proposition}
\newtheorem{cor}[thm]{Corollary}
\theoremstyle{definition}
\newtheorem{dfn}[thm]{Definition}
\theoremstyle{remark}
\newtheorem{rem}[thm]{Remark}
\numberwithin{equation}{section}
\def\tilde{\widetilde}
\renewcommand{\tilde}{\widetilde}
\def\C{\bb  C}
\def\CC{\bb  C}
\def\E{\bb  E}
\def\F{\bb  F}
\def\d{\delta}
\def\N{\bb  N}
\def\CC{\bb  C}
\def\CC{\bb  C}
\def\E{\bb  E}
\def\F{\bb  F}
\def\d{\delta}
\def\CC{\bb  C}
\def\phi{\varphi}
\def\n{\noindent}
\def\nl{\nolimits}
\def\tr{\rm  tr}
 \def\G{\bb G}
\begin{document}
 
\title{Operator spaces with the  WEP,       the OLLP and the Gurarii property
 }

\author{by\\
Gilles  Pisier  \\
Sorbonne Universit\'e\\
and\\
Texas  A\&M  University}

\def\C{\mathscr{C}}
\def\B{\mathscr{B}}
\def\I{\cl  I}
\def\e{\cl  E}
  
  \maketitle
  
\begin{abstract}    
We  construct  non-exact operator spaces     satisfying     the  Weak  Expectation  Property  (WEP)  and  
 the Operator space version of the Local  Lifting  Property  (OLLP).
 These examples should be compared with the example we recently gave of
 a $C^*$-algebra with WEP and LLP.
The construction produces   several  new analogues among operator spaces
of the Gurarii space, extending Oikhberg's previous work. Each of our
  ``Gurarii operator spaces"  is associated to a class of finite dimensional operator spaces
  (with suitable properties). In each case
 we show the space exists and is unique up to   completely  isometric isomorphism.  
    \end{abstract}

   The main goal of this note is   the construction of  examples of
    operator spaces $X\subset B(H)$ that have both the weak expectation property (in short WEP) and the operator space analogue of the local lifting property (in short OLLP) but are not exact.
    The WEP implies that the bidual  $X^{**}$ is the range of a completely contractive projection
    on $B(H)^{**}$, and hence can be identified with a ternary ring
    of operators (TRO) (see Remark \ref{tro}).
     This construction was known to us before we obtained an example
    of a $C^*$-algebra with both WEP and LLP (see \cite{P7}).
    While the operator space case seems much less significant,
       certain features of the various spaces we obtain, in particular their Gurarii property, may be of independent interest for the theory of triple operator systems  or ternary rings of operators (in short TROs).
       
       The origin of the latter theory
       goes back to Choi and Effros  \cite{[CE3]} (for  unital projections) and  Youngson \cite{Y},
       who proved
       that the range of a completely contractive projection $P$ on a $C^*$-algebra $A$ is
       (completely isometrically) isomorphic to a
       triple subsystem of another $C^*$-algebra $B$ with the triple product
       defined on $P(A)$ by $[a,b,c]=P(ab^*c)$. A ternary ring of operators (TRO in short)
       is a closed  subspace of $B(H,K)$ ($H,K$ being Hilbert spaces)
       stable by the triple product $(a,b,c)\mapsto ab^*c$. 
       A typical example
       of   TRO is the subspace $V=pAq \subset A$  where $p,q$ are (self-adjoint) projections in a $C^*$-algebra $A$. Then $V$ is a left (resp. right) submodule
       with respect to the $C^*$-subalgebras $pAp$ (resp. $qAq$).
   TROs, which were introduced by Hestenes in 1961,
       now have a rather developed theory, we refer to the paper \cite{KR} and the book \cite{BLM} for a detailed exposition including  history and references. 
       
       An operator space $X\subset B(H)$ has the WEP if and only if there is a completely contractive
       projection $P: B(H)^{**} \to X^{**}$ onto $X^{**}$, and  $P$ can always be chosen normal.
       Thus in this case $X^{**}$ is always completely isometric to a TRO by Youngson's theorem.
         We will produce examples of such  spaces with several surprising properties.
         The main one being the ``Gurarii property". In Banach space theory
         the Gurarii space is a separable Banach space $X$
         with the property that for any finite dimensional (f.d. in short)
         Banach space $F$ and any subspace $E\subset F$, for any $\vp>0$
         any linear embedding $f: E\to X$ such that $\|f\|\|{f^{-1}}_{|f(E)}\| <1+\vp$
         admits an extension $\tilde f: F \to X$ still satisfying the strict inequality
         $\|\tilde f\|\|{\tilde f^{-1}}_{|\tilde f(F)} \|<1+\vp$.
         Equivalently, assuming $\|f\|\|{f^{-1}}_{|f(E)}\| \le 1+\vp$, for any $\d>0$
         we can find an extension $\tilde f: F \to X$  satisfying  
                  $\|\tilde f\|\|{\tilde f^{-1}}_{|\tilde f(F)} \|\le (1+\vp)(1+\d)$.
   Gurarii \cite{Gu} showed that such a space exists and is unique up to
   $\vp$-isometric isomorphism, whence the terminology ``Gurarii space". 
   We will denote this space by $\bb G$. Later on Lusky \cite{Lus} showed its uniqueness
   up to isometric isomorphism. 
   More recently in \cite{Kus} Kubi\'s and Solecki (see also \cite{GK})
     gave a much simpler proof
   that greatly influenced \S \ref{uniq} of  the present paper.
   
   We refer to \cite{GK,ACGM} for more  recent results and references
   on generalizations of    Gurarii spaces, which Gurarii originally called
   spaces of almost universal disposition.
   
    In \cite{Oi} Oikhberg
   investigated a non-commutative version of the Gurarii space
   in the operator space framework.
   Operator spaces are defined as closed linear subspaces of $B(H)$,
   between which the completely bounded maps
   are the natural  morphisms,   in contrast with the bounded ones  used  in classical
   Banach space theory (see the books \cite{ER, P4} for details).
   The immediate difficulty that arises in this context is that in sharp contrast with the Banach space case
   the metric space of finite dimensional operator spaces (o.s. in short)
   is not separable for the natural analogue of the Banach-Mazur distance.
   In particular there is no separable o.s. $X$ containing,
   for any $\vp>0$, a completely $(1+\vp)$-isomorphic copy
   of any f.d.o.s. $F$, and this is even impossible  when we restrict to $\dim(F)\le 3$.
   This phenomenon was discovered by Junge and the author in \cite{JP} to disprove  a conjecture of Kirchberg.
    In sharp contract, the characteristic property of $\bb G$  
    obviously implies that it contains a $(1+\vp)$-isomorphic copy of any f.d. Banach space $F$.
     Observing this difficulty,   Oikhberg chose to restrict to the class of $1$-exact o.s.
     that is those o.s. $F$
     admitting for any $\d>0$ a completely $(1+\d)$-isomorphic embedding into
     $B(H)$ for some \emph{finite dimensional} $H$, or equivalently into $M_N$
       for some $N<\infty$. The latter class is clearly separable.
       Oikhberg then proved the analogue of Gurarii's result and Lupini later on
       proved the analogue of Lusky's. Actually Lupini \cite{L0,L1}
       (following Henson and Ben Yaacov \cite{BY})
       developed a much more general theory in which spaces with a Gurarii
       property are described as Fra\"\i ss\'e limits. 
       The examples we present in this paper can be viewed
       as new illustrations of this idea.
       We consider a class of f.d.o.s. $\cl E$
       that is stable  under the operations 
       of passing to the quotient (in the o.s. sense)
       and of  direct sum $(F_1,F_2)\mapsto F_1\oplus_1 F_2$.
       We also assume that any space completely isometric to
       one in $\cl E$ is also in $\cl E$. (Such a class is what we call   a ``league".)
       Lastly, we assume that  $\cl E$ is separable for the
       distance $d_{cb}$. For any such $\cl E$
       we show that there is a \emph{unique} (up to completely isometric isomorphism)
      separable  o.s. 
       $X$ satisfying the cb-analogue of the Gurarii property
       restricted to $F$'s in $\cl E$ and also such that 
       $X$ locally embeds in $\cl E$ in the following sense: for any f.d.
       $E\subset X$  and any $\vp>0$ there is $F\in \cl E$
       containing $E$ completely $(1+\vp)$-isometrically.
       We will denote this unique $X$ by ${\bb G}_\cl E$.
   We show that any separable o.s. $Y$ that   locally embeds in $\cl E$
   embeds completely isometrically in ${\bb G}_\cl E$.
   
   While there are many possibilities,
   our interest focuses on 3 main examples of such $\cl E$'s.\\
   The first one is the class ${\cl E}_{\max}$   of all the f.d. ``maximal" o.s.
   in the sense of \cite{BP} (see also \cite{ER,P4}).
   This is the class of those f.d.  $F\subset B(H)$ such that $\|u\|_{cb}=\|u\|$
   for any $u: F \to Y$ where $Y$ is an arbitrary o.s. This means that 
   $F$ is a f.d. quotient of $\ell_1$ itself equipped with its maximal o.s. structure.
   Equivalently, in place of ${\cl E}_{\max}$ we may consider the class $ \cup_n Q\ell_1^n$
   where $Q\ell_1^n$ denotes the class of all quotients of $\ell_1^n$.
   Indeed, the latter class is suitably dense in ${\cl E}_{\max}$.
            \\    The second example is the class $\cl E_1=\cup_n QS_1^n$
    where $QS_1^n$ denotes the class of all quotients of $S_1^n$ and $S_1^n=M_n^*$
    (o.s. dual). 
    Note that  in sharp contrast with $\ell_1$ for   ${\cl E}_{\max}$ any f.d.o.s. is a quotient of $S_1$
    (see e.g. \cite[p. 27]{BLM}), so
    here we cannot replace $\{S_1^n\mid n\ge 1\}$ by $S_1$.
   \\ The third one is the class $\cl E_{S\C}$ formed of all the f.d. subspaces of the $C^*$-algebra
    $
    \C=C^*(\F_\infty)$ of the free group $\F_\infty$ (with countably infinitely many generators).
   Its stability under $\oplus_1$, under quotients as well
   as under duality
    were observed in \cite{JP}.
    In the latter two cases the space ${\bb G}_\cl E \subset B(H)$ has the WEP,
    whence a  
       projection $P$ from $ B(H)^{**} $ onto $\G^{**}_\cl E$ with $\|P\|_{cb}=1$,
    while for $\cl E =\cl E_{\max}$ we only obtain a $P$ with  $\|P\|=1$.
    In the case $ \cl E=\cl E_1 $, the space ${\bb G}_\cl E$ has the o.s. version of the LLP
    (in short OLLP), which is extensively studied  by Ozawa in \cite{Ozllp}.
    No infinite dimensional $C^*$-algebra has the OLLP.
    Thus ${\bb G}_{{\cl E}_1}$ is an o.s. analogue of the $C^*$-algebra  in \cite{P7}.
    
    The space $\G_{ {\cl E}_{\max} }$  and its bidual  might be relevant to tackle some important open questions
    about maximal operator spaces that we describe in Remark \ref{iq}.
    
    For completeness, we should add to the preceding list   the class ${\cl E}_{\min}$ formed of all f.d.  o.s. that are ``minimal" in the sense of \cite{BP} (see also \cite{ER,P4}).
    This is the class of those f.d.  $F\subset B(H)$ such that $\|u\|_{cb}=\|u\|$
   for any $u: Y \to F$ where $Y$ is an arbitrary o.s. But this example  goes no further than
    the classical case. Indeed, it turns out (see Remark \ref{bex'})
    that the space $\G_{ {\cl E}_{\min} }$ is completely isometric 
    (and a fortiori isometric) to the classical $\G$ equipped with its minimal o.s. structure.
    More interesting variations on the same theme
    are presented in  \S \ref{oik} using Lehner's generalization
    of the minimal/maximal notions \cite{Le}.
     
    We now briefly  describe the contents of this paper.
    In \S \ref{wep} (resp. \S \ref{llp}) we review the WEP (resp. the OLLP).
    In \S \ref{mt} we describe the operator space variant of the so-called ``push out"
    construction. Our main results are in \S \ref{gpos}
    where we produce various examples of o.s. with the Gurarii property,
    in particular a non-exact one with both the WEP and the OLLP.
    In \S \ref{uniq}  we prove the uniqueness (up to complete isometry) of the  Gurarii space  
     associated to any given league. The proof is a simple adaptation of the proof
     given by Kubi\'s and Solecki in \cite{Kus} for the classical Gurarii case.
 In \S \ref{oik} we relate our non-exact examples to Oikhberg's exact Gurarii space.
 
    \medskip
    
    \n{\bf Notation and general background}
    
      \medskip
    
   \n As often, we abbreviate completely bounded by c.b.,
    completely positive by c.p. and completely contractive by c.c.
    We will also abbreviate operator space (or operator spaces) by o.s.
    and finite dimensional by f.d.\\
    An o.s. is a closed subspace of a $C^*$-algebra or of $B(H)$.
    The duality of o.s. is a consequence of Ruan's characterization
    (see e.g.  \cite{ER}) of the sequences of norms on $(M_n(E))_{n\ge 1}$ (where $M_n(E)$ denotes the space of $n \times n$-matrices with entries in a vector space $E$) that come from 
    an embedding of $E$ into $B(H)$ for some $H$.
Given an o.s. $E \subset B(H)$ with (Banach space sense) dual $E^*$  there is an $\cl H$ and an isometric embedding
$j: E^*\to B(\cl H)$  that induces 
isometric isomorphisms $M_n(j(E^*) )\simeq CB(E,M_n)$ for all $n$.
The embedding $j$ allows one to consider $E^*$ as an o.s.
This is what is called the dual o.s. structure on $E^*$.
We will often refer to it as the o.s. dual of $E$.
    We refer to \cite{ER,P4,BLM} for more background on operator spaces.
    Let $(E_i)_{i\in I} $ be a family of o.s. Assuming $E_i\subset B(H_i)$,
    the direct sum in the $\ell_\infty$-sense of $(E_i)_{i\in I} $
    can be realized as embedded ``block diagonally"
    in $B(\oplus_{i\in I} H_i)$. 
    The resulting operator space with be denoted
    by $(\oplus\sum_{i\in I} E_i)_\infty$, and if all the $E_i$'s coincide with a single space $E$ we denote it by $\ell_\infty(I; E)$.
    When $(E_i)_{i\in I} $ is a sequence of spaces $(E_n)$ we use   the lighter notation
    $\ell_\infty(\{E_n\}) $ instead of $(\oplus\sum_{n\in \N} E_n)_\infty$.
    Moreover, we denote by $c_0(\{E_n\}) $ the subspace
    formed of the sequences $x=(x_n)\in \ell_\infty(\{E_n\})$ such that
    $ \lim\|x_n\|=0$.
    We will use the analogous notion of direct sum
    with respect to $\ell_1$.
    The space $(\oplus\sum_{i\in I} E_i)_1$ is the space of families
    $(x_i)_{i\in I} \in \prod_{i\in I} E_i$ with $\sum \|x_i\|_{E_i}<\infty$, equipped with the o.s. structure
    associated to the embedding
    $J: (\oplus\sum_{i\in I} E_i)_1 \subset B(H)$
  defined 
  by
  $$J((x_i)_{i\in I})=  \oplus_{u\in C}\sum\nl_{i\in I} u_i(x_i)$$
  where $C$ is the collection of all $u=(u_i)_{i\in I} \in \prod_{i\in I} \beta_i$
  with 
  $\beta_i$ denoting the unit ball of 
   $  CB(E_i, \cl H)$, and $\cl H$ being 
   a suitably large Hilbert space (say with dimension
   equal to the cardinal of $(\oplus\sum_{i\in I} E_i)_1$).
   Note that
    \begin{equation}\label{pde9}\| J((x_i)_{i\in I})\|=\sup_{\|u_i\|_{cb}\le 1}\|\sum\nl_{i\in I} u_i(x_i)\|.\end{equation}
Therefore
    $\| J((x_i)_{i\in I})\|=\sum\nl_{i\in I} \|x_i\|$
    so that $J$ is an \emph{isometric} embedding of 
    the $\ell_1$-sense (Banach space theoretic) direct sum of $(E_i)_{i\in I}$
    into $B(\cl H)$.
   Similarly for any $(x_i)_{i\in I} \in   M_N((\oplus\sum_{i\in I} E_i)_1)$ we have
    \begin{equation}\label{de9}
    \| (Id_{M_N} \otimes J)((x_i)_{i\in I})\|_{M_N(B(H))}=\sup_{\|u_i\|_{cb}\le 1}\|\sum\nl_{i\in I} (Id_{M_N} \otimes u_i)(x_i)\|_{M_N(B(\cl H))},\end{equation}
    but the latter supremum is less easy to describe than in the case $N=1$.\\
    When $(E_i)_{i\in I} =\{E ,F\}$ we denote 
    $(\oplus\sum_{i\in I} E_i)_1$ simply by $E  \oplus_1 F$.

    When $E_i=\CC$ for any $i\in I$ we set
    $\ell_1(I)=(\oplus\sum_{i\in I} E_i)_1$.
    In that case it is easy to see that
    $\|u\|_{cb}=\|u\|$ for any $u: \ell_1(I) \to B(H)$,
    which means that $\ell_1(I)$ is a maximal o.s.
    
    In particular,   we set
    $$\ell_1^n= \ell_1(\{1,\dots,n\}).$$
    Thus for us $\ell_1^n$ is an o.s.
    
    It is a well known fact that the dual o.s. of
    $(\oplus\sum_{i\in I} E_i)_1$ can be identified
    completely isometrically  with
    $(\oplus\sum_{i\in I} E_i^*)_\infty$.
    In particular we have completely isometrically 
    $${\ell_1^n}^*={\ell_\infty^n}.$$
    
    Let $S^n_1$ (resp. $S_1$) be the trace class on $\ell^n_2$ (resp. $\ell_2$).
      Viewing  $S^n_1$ (resp. $S_1$) as a space formed of $n \times n$-matrices (resp. bi-infinite matrices), we have
      natural embeddings $S^n_1\subset S^{n+1}_1 \subset S_1$. 
    Thus $S_1$ appears as the closure of $\cup_n S_1^n$. Recall $\|x\|_{S_1}={\rm tr}(|x|)$ for all
    $x\in S_1$. Let $E$ be an o.s.
    In \cite{87.b.} a vector valued (meaning here $E$-valued) version of the trace class is considered.
   The space $S^n_1[E]$ is defined as the completion
   of the algebraic tensor product $S^n_1 \otimes E$
   equipped with the  norm defined as follows 
   $$ \forall x\in S^n_1 \otimes E \quad 
   \|x\|_{S^n_1[E]}= \inf\{  \|a\|_{S_2^n} \|y\|_{M_n(E)}\|b\|_{S_2^n} \mid x= (a\otimes I) \cdot y\cdot (b\otimes I)\},$$
   where the ${S_2^n}$-norm    is the Hilbert-Schmidt norm and ${M_n(E)}$ is viewed
   as ${M_n\otimes E}$.
   One can check  that $S^n_1[E] \subset  S^{n+1}_1[E]$ isometrically for any $n$
   and hence we may define $S_1[E]$ simply by setting
   \begin{equation}\label{S1}
   S_1[E]=\ovl{\cup_n S_1^n[E]}.
   \end{equation}
 This space coincides isometrically 
 with the o.s.projective tensor product
 of $S_1$ and $E$    when $S_1$ is equipped with
 its o.s. structure as the dual of the space $K$ of all compact operators on $\ell_2$,
 with the duality defined by $x(k)= \tr ( {}^t x k)$ for $x\in S_1,k\in K$
 (see \cite[\S 4]{P4} \cite[p. 140]{P4} for more details).

    A mapping $u:E \to F$ between o.s. is called completely isometric if it is injective and
    if $\|u\|_{cb}=\|u^{-1}_{|u(E)}\|_{cb}=1$. More generally, let $\vp>0$, we will say that
    $u$ is completely $(1+\vp)$-isometric if $\|u\|_{cb}\le 1+\vp$ and $\|u^{-1}_{|u(E)}\|_{cb}\le1+\vp$.
    In \S \ref{gpos}, for short we  call such maps {\it $\vp$-embeddings}. By {\it an embedding} 
    we mean just a linear embedding, or equivalently an injective map
    between f.d. spaces.
    
    Let $E,F$ be completely isomorphic o.s. We denote by $d_{cb}(E,F)$ the ``multiplicative distance" defined by 
    \begin{equation}\label{dcb} d_{cb}(E,F)=\inf\{\|u\|_{cb} \|u^{-1}\|_{cb}   \}\end{equation}
    where the infimum runs over all isomorphisms $u: E \to F$.
    We set $d_{cb}(E,F)=\infty$ if $E,F$ are not completely isomorphic.
    
    Let $c\ge 1$.
    An operator space $X$ is called $c$-exact if
    for any $\vp>0$ and any f.d. $E \subset X$ there is an integer $N$ and 
    $F\subset M_N$ such that $d_{cb}(E,F)\le c+\vp$.
    Any exact  (in particular any nuclear) $C^*$-algebra is $1$-exact.
    See \cite[p. 285]{P4}
    for more on exact o.s.
    
     We refer the reader to the books \cite{ER,P4}
    for the standard notions of o.s. theory, in particular the notions of dual
    and quotient o.s. that we use freely throughout this paper.

    We end this section with two technical points that are very useful when dealing 
with local (meaning finite dimensional) questions.

\begin{rem}[About perturbation]\label{pertu} We  refer repeatedly to perturbation arguments.
By this we mean the following easily verified assertion
(see \cite[p. 69]{P4}).
Let $\vp>0$. Assume $x_1,\cdots,x_n$ are linearly independent vectors in an o.s.
$X\subset B(H)$. Let    $y_1,\cdots,y_n\in X$
be   such that $\sup_j\|x_j-y_j\|\le \vp$. Then for all small enough $\vp>0$   there is a complete  isomorphism
$w: X \to X$ such that $w(x_j)=y_j$ and such that
$\|w\|_{cb}\|w^{-1}\|_{cb} \le 1+\d(\vp)$
with $\lim_{\vp\to 0}\d(\vp)=0$.
In fact $w$ is a perturbation of the identity on $X$. (Indeed, if $\xi_j\in X^*$ is biorthogonal to $(x_j)$ we may take $w=Id_X+ \sum \xi_j\otimes (y_j-x_j)$.)
 \end{rem}
 \begin{rem}[About inductive limits]\label{ind} 
 Let $(\d_n)$ be a summable sequence of positive numbers.
 Consider a sequence $(E_n)$ of o.s. given together with a sequence
 of embeddings $\phi_n :E_n \to E_{n+1}$ such that
 $\|\phi_n\|_{cb} \le 1+\d_n$ and $\|{\phi^{-1}_n}_{|\phi_n(E_n)}\|_{cb} \le 1+\d_n$.
 The inductive limit $\cl L(\{E_n,\varphi_n\})$ of the system $(E_n ,\phi_n)$ is defined as follows.\\
 Let $\cl L=\ell_\infty(\{E_n\})/c_0(\{E_n\})$ and let $j_n: E_n \to \cl L$ be defined for $x\in E_n$ 
 by $j_n(x)= Q( ( x_k) )$ where $Q: \ell_\infty(\{E_n\})  \to \cl L$ is the quotient map
 and where $(x_k)$ is defined by $x_k=0 $ for all $k<n$, $x_n=x$
 and
 $x_k=\phi_{k-1} \cdots \phi_n(x)$ when $k> n$.
 Note that in the latter case 
 $$((1+\d_{k-1}) \cdots (1+\d_n))^{-1} \|x\| \le \|x_k\| \le (1+\d_{k-1}) \cdots (1+\d_n) \|x\|,$$
 and $\|j_n(x)\|= \limsup_k\|x_k\|$.
  Since we assume $(\d_n)$ summable  $j_n: E_n \to X$ is
  completely $(1+\d_n')$-isometric for some $\d_n'\to 0$
  and
  $d_{cb}(E_n, j_n(E_n))\to 1$. Note $ j_n(E_n) \subset  j_{n+1}(E_{n+1}) \subset \cl L$ for all $n$.
\\
  We then define the inductive limit as $$\cl L(\{E_n,\varphi_n\})=\ovl{\cup j_n(E_n)} \subset \cl L.$$
    The construction shows that the following diagram commutes.
  
  $$\xymatrix{&E_{n+1}\ar[r]^{ j_{n+1} \ \ }& { \ j_{n+1} }(E_{n+1}) \\
& E_n\ \ \ar[u]^{\phi_n} 
 \ar[r]^{j_n} & j_n(E_n)\ar@{^{(}->}[u] }$$
 
 Let $\cl L(\{F_n,\psi_n\})$ be another similar inductive limit associated to
 maps $\psi_n : F_n \to F_{n+1}$.
 Suppose given for each $n$ a map $v_n : F_n \to E_n$ with $\sup\|v_n\|<\infty$ (resp. $\sup\|v_n\|_{cb}<\infty$). 
 The sequence $(v_n)$ defines a bounded (resp. c.b.) ``diagonal" map
 from $\ell_\infty(\{F_n\}) $ to $\ell_\infty(\{E_n\}) $,
 and hence also
 from $\ell_\infty(\{F_n\})/c_0(\{F_n\}) $ to $\ell_\infty(\{E_n\}) /c_0(\{E_n\})$.
 The latter map has norm (resp. c.b. norm) $\le \limsup\|v_n\|$ (resp. $\le \limsup\|v_n\|_{cb}$).
 Moreover if $(v_n)$ is such that
 $v_{n+1} \psi_n= \varphi_n v_n $  for all $n$ (``intertwining"),
 then $(v_n)$ defines a bounded (resp. c.b.)   map
 from $\cl L(\{F_n,\psi_n\}) $ to $\cl L(\{E_n,\varphi_n\}) $.\\
 Lastly, if each $v_n$ is isometric (resp. completely isometric)
 then so is the latter map.

   \end{rem}

       \section{The WEP}\label{wep}
    
  The weak expectation property (in short WEP) for $C^*$-algebras
  was introduced by Lance who gave several equivalent characterizations
  for it. One of them matches 
  the following natural generalization    to operator spaces, which
 was explicitly considered in \cite[p. 270]{BLM}.
  \begin{dfn} An operator space $X\subset B(H)$ has the WEP if there is 
  $T: B(H) \to X^{**}$ with $\|T\|_{cb}=1$ such that $T_{|X}= i_X: X \to X^{**}$, where
    $i_X: X \to X^{**}$ denotes the canonical inclusion.
  \end{dfn}
  Equivalently, the bitransposed embedding $X^{**}\subset B(H)^{**}$ admits a
  projection $P:  B(H)^{**} \to X^{**}$ onto $X^{**}$ such that $\|P\|_{cb}=1$.
  
  This notion does not depend on the (completely  isometric) embedding
  $X\subset B(H)$, as well as its analogue
  for merely contractive projections  considered in the next statement.

   \begin{pro}\label{p1}  
   Let $X\subset B(H)$ be an      o.s. 
   The following are equivalent.
     \item{(i)} There is  a map $T: B(H) \to X^{**}$ with $\|T\|\le 1$
     such that $T(x)=x$ for any $x\in X$.
      \item{(ii)} There is a  contractive projection $P: B(H)^{**} \to X^{**}$ onto $X^{**}$.
      \item{(ii)'} Same as (ii) with   
     a weak* to  weak* continuous projection $P: B(H)^{**} \to X^{**}$.
         \item{(iii)} 
      For  any  $n\ge  1$  and  any  subspace  $S\subset  \ell_1^n$,
          any  linear  map  $u:S  \to  X$  admits
          for  each  $\vp>0$    an  extension
          $\tilde  u:  \ell_1^n\to  X$  with
          $$\|      \tilde  u\|_{cb}  \le  (1+\vp)\|          u\|_{cb}.$$  
              \end{pro}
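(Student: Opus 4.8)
The plan is to prove the cycle $(ii)'\Rightarrow(ii)\Rightarrow(i)\Rightarrow(ii)'$ together with the equivalence $(i)\Leftrightarrow(iii)$. The implication $(ii)'\Rightarrow(ii)$ is immediate. For $(ii)\Rightarrow(i)$ I would set $T=P\circ\kappa$, where $\kappa\colon B(H)\to B(H)^{**}$ is the canonical inclusion; since $X\subset B(H)$ induces an isometric embedding $X^{**}\subset B(H)^{**}$ and the naturality square commutes, one gets $\|T\|\le\|P\|\le1$ and $T(x)=P(\kappa(x))=x$ for $x\in X$. For $(i)\Rightarrow(ii)'$ I would take the bitranspose $T^{**}\colon B(H)^{**}\to X^{****}$ (automatically weak*-to-weak* continuous) and compose it with the canonical contractive weak*-continuous projection $Q=(i_{X^*})^{*}\colon X^{****}\to X^{**}$. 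To check that $P:=Q\circ T^{**}$ restricts to the identity on $X^{**}\subset B(H)^{**}$, the point is that $(T\circ\iota)^{**}=(i_X)^{**}$ (where $\iota\colon X\hookrightarrow B(H)$ is the inclusion, so $T\circ\iota=i_X$) while $Q\circ(i_X)^{**}$ agrees with the identity on $X^{**}$ because $(i_X)^{*}\circ i_{X^*}=\mathrm{id}_{X^*}$; here one must be careful not to invoke the \emph{false} identity $(i_X)^{**}=i_{X^{**}}$. Then $P$ is a weak*-continuous norm-one projection of $B(H)^{**}$ onto $X^{**}$.

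For $(i)\Rightarrow(iii)$: given $u\colon S\to X$ with $S\subset\ell_1^n$, normalize $\|u\|_{cb}=1$. Viewing $u$ as a map into $B(H)$, use the injectivity of $B(H)$ as an operator space to extend it to $\hat u\colon\ell_1^n\to B(H)$ with $\|\hat u\|_{cb}\le1$, and put $\tilde u_0:=T\circ\hat u\colon\ell_1^n\to X^{**}$. Then $\|\tilde u_0\|\le1$ (and $\|\tilde u_0\|_{cb}=\|\tilde u_0\|$ since $\ell_1^n$ is maximal), and $\tilde u_0|_S=T\circ u=u$ because $T|_X=i_X$. Now $\tilde u_0(\ell_1^n)$ is a finite-dimensional subspace of $X^{**}$ containing $u(S)\subset X$, so the classical Banach-space principle of local reflexivity supplies, for any $\delta>0$, an isomorphism $\lambda$ of $\tilde u_0(\ell_1^n)$ into $X$ with $\|\lambda\|\le1+\delta$ that fixes $\tilde u_0(\ell_1^n)\cap X$, hence fixes $u(S)$. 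Then $\tilde u:=\lambda\circ\tilde u_0\colon\ell_1^n\to X$ extends $u$ and satisfies $\|\tilde u\|_{cb}=\|\tilde u\|\le1+\delta$, which is exactly what $(iii)$ asks for.

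The implication $(iii)\Rightarrow(i)$ is where the real work lies. First I would reduce it to a local statement: it is enough to produce, for every finite-dimensional $F\subset B(H)$ and every $\varepsilon>0$, a map $v\colon F\to X$ with $\|v\|\le1+\varepsilon$ and $v|_{F\cap X}=\mathrm{incl}$; an ultrafilter refining the order on the directed set of pairs $(F,\varepsilon)$ (with $F\subseteq F'$, $\varepsilon\ge\varepsilon'$), together with weak*-compactness of balls in $X^{**}$, then glues the $v$'s into a linear $T\colon B(H)\to X^{**}$ with $\|T\|\le1$ and $T|_X=i_X$ (the last because eventually $x\in F$, so $v(x)=x$). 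To build $v$, choose a fine net $y_1,\dots,y_N$ of the unit sphere of $F$ and let $q\colon\ell_1^N\to F$, $q(e_i)=y_i$; this $q$ is completely contractive (as $\ell_1^N$ is maximal) and an approximate metric quotient, so the induced bijection $\bar q\colon\ell_1^N/\ker q\to F$ has $\|\bar q\|,\|\bar q^{-1}\|$ arbitrarily close to $1$. Let $S:=q^{-1}(F\cap X)$; then $q|_S\colon S\to F\cap X\subset X$ has $\|q|_S\|_{cb}\le\|q\|_{cb}\le1$, so $(iii)$ gives an extension $\tilde u\colon\ell_1^N\to X$ of $q|_S$ with $\|\tilde u\|\le\|\tilde u\|_{cb}\le1+\varepsilon'$. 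Since $\ker q\subset S$ and $q$ vanishes there, $\tilde u$ vanishes on $\ker q$ and hence descends to $\hat v\colon\ell_1^N/\ker q\to X$; then $v:=\hat v\circ\bar q^{-1}\colon F\to X$ works, with $v|_{F\cap X}=\mathrm{incl}$ because $\tilde u$ and $q$ agree on $S$. The main obstacles I anticipate are the multiplicative bookkeeping needed to keep every constant close to $1$, and the structural observation that only the \emph{norm} of the resulting $T$ (not its cb-norm) is controlled in this direction — which is precisely why $(i)$ and $(ii)$ are stated with merely contractive, rather than completely contractive, maps.
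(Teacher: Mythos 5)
Your proposal is correct, and its skeleton (the cycle $(ii)'\Rightarrow(ii)\Rightarrow(i)\Rightarrow(ii)'$ plus $(i)\Leftrightarrow(iii)$) coincides with the paper's; your formula $Q\circ T^{**}=(i_{X^*})^*\circ T^{**}$ is literally the paper's $({T^*}_{|X^*})^*$. The differences are in the two substantive implications. For $(i)\Rightarrow(iii)$ the paper, after the same injectivity-of-$B(H)$ extension into $X^{**}$, only invokes Goldstine (weak* density of $B_X$ in $B_{X^{**}}$, using that the unit ball of $B(\ell_1^n,X^{**})$ is just a product of $n$ copies of $B_{X^{**}}$), then Mazur plus perturbation; you instead invoke the full principle of local reflexivity with the clause that the $(1+\delta)$-isomorphism fixes $E\cap X$ — a stronger tool than needed, but the conclusion is the same and the argument is sound. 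For $(iii)\Rightarrow(i)$ the paper simply cites a Hahn--Banach argument (the one spelled out for the cb-version in Proposition 1.3: the hypothesis forces $X^*\otimes_{\wedge}X\subset X^*\otimes_{\wedge}B(H)$ isometrically, and one extends the trace functional); your route — discretize a f.d.\ $F\subset B(H)$ by an approximate metric surjection $q:\ell_1^N\to F$, apply $(iii)$ to $q$ restricted to $q^{-1}(F\cap X)$, descend modulo $\ker q$, and glue the resulting almost-contractions $v_{(F,\varepsilon)}$ by a weak*-ultrafilter limit in $X^{**}$ — is a genuinely different, self-contained compactness argument. It trades the slick duality for explicit local extensions; what it buys is that no tensor-norm machinery is needed, and it makes visible exactly where the maximality of $\ell_1^N$ enters (namely, $\|q\|_{cb}=\|q\|$ and $\|\tilde u\|_{cb}=\|\tilde u\|$) and why only the plain norm of $T$ is controlled. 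All the small verifications you would still need to write out ($\ker q\subset q^{-1}(F\cap X)$, the successive-approximation bound $\|\bar q^{-1}\|\le (1-\delta)^{-1}$, linearity of the ultrafilter limit) do go through.
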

               \begin{proof}  (i) $\Rightarrow$ (ii)' follows by considering
               $P=({T^*}_{|X^*} )^*$, and  (ii)' $\Rightarrow$ (ii) is trivial.
                Note that (ii) $\Rightarrow$ (i). Assume (i). Let $u$ be as in (iii). Using the injectivity of $B(H)$ we find
                 $\tilde  u:  \ell_1^n\to  X^{**}$ extending $u$ with 
                 $\|      \tilde  u\|_{cb}  =\|          u\|_{cb}.$
               By the local reflexivity principle 
               (which here boils down to the weak* density of
               $B_X$ in $B_{X^{**}}$),  there is a net
               of maps $\tilde  u(i):  \ell_1^n\to  X $ with $\|          \tilde  u(i)\|_{cb}\le 1$
               tending weak* to $\tilde  u$ so that
               $ {\tilde  u (i)}_{|S}$ tends to $u$ for $\sigma(X,X^*)$.
               By Mazur's theorem, passing to  convex combinations
               we get a similar net such that ${\tilde  u(i)}_{|S}$ converges pointwise in norm to $u$. Then by perturbation (see Remark \ref{pertu}), (iii) follows.
                (iii) $\Rightarrow$ (i)  is  a  well  known  application  of      Hahn-Banach. See e.g. \cite[Prop. 2.1]{P7}.
              \end{proof}
              In the o.s. context we have to distinguish between
              contractive  and completely contractive   projections $P: B(H)^{**} \to X^{**}$,
              as follows. \\
              Let $S_1^n=M_n^*$ be the operator space dual of $M_n$, which is  a non-commutative analogue of $ \ell_1^n $.
              
               \begin{pro}\label{p1w}  
   Let $X\subset B(H)$ be an      o.s. 
   The following are equivalent.
     \item{(i)} There is  a map $T: B(H) \to X^{**}$ with $\|T\|_{cb}\le 1$
     such that $T(x)=x$ for any $x\in X$.
      \item{(ii)} There is a  completely contractive projection $P: B(H)^{**} \to X^{**}$ onto $X^{**}$.
      \item{(ii)'} Same as (ii) with   
     a weak* to  weak* continuous projection $P: B(H)^{**} \to X^{**}$.
         \item{(iii)} 
      For  any  $n\ge  1$  and  any  subspace  $S\subset  S_1^n$,
          any  linear  map  $u:S  \to  X$  admits
          for  each  $\vp>0$    an  extension
          $\tilde  u:  S_1^n\to  X$  with
          $$\|      \tilde  u\|_{cb}  \le  (1+\vp)\|          u\|_{cb}.$$  
              \end{pro}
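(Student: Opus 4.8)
The plan is to follow the scheme of the proof of Proposition~\ref{p1}, systematically replacing $\ell_1^n$ by $S_1^n$ and the norm by the cb-norm, and substituting at the two decisive points: (a) the Arveson--Wittstock complete Hahn--Banach theorem (injectivity of $B(H)$ for c.b.\ maps) in place of the scalar Hahn--Banach theorem, and (b) the (completely) isometric identification $CB(S_1^n,Y)=M_n(Y)$, valid for every operator space $Y$ (a standard fact, see e.g.\ \cite{ER,P4}), in place of $B(\ell_1^n,Y)=\ell_\infty^n(Y)$. With these in hand, $(i)\Rightarrow(ii)'$ is proved exactly as in Proposition~\ref{p1} by taking $P=({T^*}_{|X^*})^*$: this is an adjoint map, hence weak* continuous, it is completely contractive since $\|P\|_{cb}\le\|T\|_{cb}\le1$, and it is a projection onto $X^{**}$; $(ii)'\Rightarrow(ii)$ is trivial, and $(ii)\Rightarrow(i)$ follows by restricting $P$ to $B(H)\subset B(H)^{**}$.

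For $(i)\Rightarrow(iii)$, given $S\subset S_1^n$ and $u\colon S\to X$, I would first use the injectivity of $B(H)$ to extend $u\colon S\to X\subset B(H)$ to $\hat u\colon S_1^n\to B(H)$ with $\|\hat u\|_{cb}=\|u\|_{cb}$, and then compose with $T$ from $(i)$ to get $\tilde u:=T\hat u\colon S_1^n\to X^{**}$ with $\tilde u_{|S}=i_X u$ and $\|\tilde u\|_{cb}\le\|u\|_{cb}$. Since $M_n$ is finite dimensional, $CB(S_1^n,X^{**})=M_n(X^{**})=\bigl(M_n(X)\bigr)^{**}=\bigl(CB(S_1^n,X)\bigr)^{**}$, so Goldstine's theorem yields a net $\tilde u(i)\colon S_1^n\to X$ with $\|\tilde u(i)\|_{cb}\le\|u\|_{cb}$ converging to $\tilde u$ for the point--weak* topology; in particular $\tilde u(i)_{|S}\to u$ pointwise for $\sigma(X,X^*)$. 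Passing to convex combinations (Mazur's theorem) we may assume $\tilde u(i)_{|S}\to u$ pointwise in norm, and then one application of the perturbation argument of Remark~\ref{pertu} (pick, for $i$ large, a small perturbation $w$ of $\mathrm{Id}_X$ with $w\tilde u(i)(s_j)=u(s_j)$ on a basis $(s_j)$ of $S$; then $\|w\|_{cb}\le1+\d$ with $\d\to0$) turns $w\tilde u(i)$ into an exact extension of $u$ with $\|w\tilde u(i)\|_{cb}\le(1+\vp)\|u\|_{cb}$, which is $(iii)$.

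For $(iii)\Rightarrow(i)$ I would argue, as in the Banach space case, through finite dimensional subspaces of $B(H)$, using that every finite dimensional operator space is, for every $\vp>0$, completely $(1+\vp)$-isometric to a quotient of some $S_1^n$ (the finite-dimensional refinement of the fact that every f.d.o.s.\ is a quotient of $S_1$; cf.\ \cite[p.~27]{BLM}). Fix a finite dimensional $E\subset B(H)$ and $\vp>0$; choose $q\colon S_1^n\to E$ with $\|q\|_{cb}\le1$ inducing a complete $(1+\vp)$-isomorphism $S_1^n/\ker q\to E$; set $S=q^{-1}(E\cap X)$ and let $u\colon S\to X$ be $q$ viewed as valued in $E\cap X\subset X$, so $\|u\|_{cb}\le1$. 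By $(iii)$ there is $\tilde u\colon S_1^n\to X$ extending $u$ with $\|\tilde u\|_{cb}\le1+\vp$; since $\ker q\subset S$ and $u$ vanishes on $\ker q$, so does $\tilde u$, whence $\tilde u$ factors as $T_E q$ for some $T_E\colon E\to X$ with $\|T_E\|_{cb}\le(1+\vp)^2$, and $T_E$ restricts to the inclusion $E\cap X\hookrightarrow X$. Letting $E$ run over the finite dimensional subspaces of $B(H)$ and $\vp\downarrow0$, and taking the point--weak* limit of the maps $T_E$ along an ultrafilter refining this directed set (using weak*-compactness of the balls of $CB(E_0,X^{**})$ for each fixed $E_0$), one gets a consistent family gluing to a map $T\colon B(H)\to X^{**}$ with $\|T\|_{cb}\le1$ and $T_{|X}=i_X$, that is $(i)$.

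The only genuinely new feature compared with Proposition~\ref{p1} --- and the point I would be most careful about --- is that \emph{full} operator-space local reflexivity is unavailable, which is precisely why $(iii)$ is formulated for subspaces of $S_1^n$ rather than of an arbitrary finite dimensional operator space. The weaker statement actually used --- weak* density of the unit ball of $CB(S_1^n,X)$ in that of $CB(S_1^n,X^{**})$ --- costs nothing once one invokes $CB(S_1^n,Y)=M_n(Y)$ together with plain Goldstine, and this is the only place where $S_1^n$ (as opposed to a general f.d.o.s.) plays a special role; the companion technical point deserving attention is the quotient representation $E=S_1^n/\ker q$ with a \emph{finite} $n$ used in $(iii)\Rightarrow(i)$.
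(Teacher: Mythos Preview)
Your treatment of $(i)\Leftrightarrow(ii)\Leftrightarrow(ii)'$ and of $(i)\Rightarrow(iii)$ is correct and matches the paper; in particular, making explicit that $CB(S_1^n,X^{**})=M_n(X^{**})=M_n(X)^{**}$ together with Goldstine is exactly the right substitute for local reflexivity in this setting.

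Your argument for $(iii)\Rightarrow(i)$, however, rests on a false claim. Saying that every f.d.o.s.\ $E$ is, for each $\vp>0$, completely $(1+\vp)$-isometric to a quotient of some $S_1^n$ is equivalent by duality (since $(S_1^n)^*=M_n$ and $d_{cb}(E,F)=d_{cb}(E^*,F^*)$ for f.d.\ $E,F$) to saying that $E^*$ is $1$-exact; since $E\mapsto E^*$ is a bijection on f.d.o.s., this would mean every f.d.o.s.\ is $1$-exact, which is precisely what \cite{JP} disproves. The true fact ``every f.d.o.s.\ is a quotient of $S_1$'' dualizes merely to ``every f.d.o.s.\ embeds completely isometrically in $S_1^*=B(\ell_2)$'', and the impossibility of truncating such an embedding to $M_n$ is exactly non-exactness. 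In the paper's own language (Remark~\ref{rmax}), the $d_{cb}$-closure of $\cl E_1=\cup_n QS_1^n$ is the class of f.d.\ $E$ with $E^*$ $1$-exact --- a \emph{proper} subclass of $OS_{fd}$. Since your construction of $T$ requires $E$ to range over all f.d.\ subspaces of $B(H)$, the argument cannot be salvaged by restricting $E$.

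The paper's route for $(iii)\Rightarrow(i)$ sidesteps this entirely: rather than approximating $E\subset B(H)$, one shows that $(iii)$ forces the canonical map $Y^*\otimes_\wedge X\to Y^*\otimes_\wedge B(H)$ (operator space projective tensor product) to be isometric for every o.s.\ $Y$; taking $Y=X$, the trace functional $t\mapsto\tr(t)$ on $X^*\otimes X$ then has norm $\le 1$ for the norm induced by $X^*\otimes_\wedge B(H)$, and a single scalar Hahn--Banach extension of this functional yields an element of $(X^*\otimes_\wedge B(H))^*=CB(B(H),X^{**})$, which is the required $T$. This argument (the paper refers to \cite[Prop.~2.1]{P7}) uses $(iii)$ only through the structure of the projective norm and never asks an arbitrary f.d.\ $E\subset B(H)$ to be close to a quotient of $S_1^n$.
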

              
              \begin{proof} The proofs that  (i) $\Rightarrow$ (ii)'  (again with
               $P=({T^*}_{|X^*} )^*$),   (ii)' $\Rightarrow$ (ii)  
               $\Rightarrow$ (i) $\Rightarrow$ (iii) are essentially identical to the  ones for the preceding proposition.
                (iii) $\Rightarrow$ (i)   also  follows by    Hahn-Banach
                using the o.s. version of the projective tensor product norm (see e.g. \cite[Prop. 2.1]{P7}). 
                We give the details  for the reader's convenience. 
                Given two o.s. $Y,X$ we denote by
                $Y \otimes_{\wedge} X$ their projective tensor product in  the o.s. sense (see e.g. \cite{ER,P4}).
                Assume (iii).   We claim   that for any o.s. $Y$ 
                the o.s. projective norm
                of the space $ Y^* \otimes B(H)$ induces on its subspace
                 $Y^* \otimes X$ the o.s. projective norm of $Y^* \otimes X$ (i.e.  the norm of $Y^* \otimes_{\wedge} X$). In other words, we claim that
                we have
an {\it isometric} inclusion
$$Y^* \otimes_{\wedge} X  \subset Y^* \otimes_{\wedge} B(H).$$
Let  $t \in Y^* \otimes B(H)$
with $\|t\|_{\wedge}<1$.
We will show that if $t\in Y^* \otimes  X$ then
$\|t\|_{Y^* \otimes_{\wedge} X}<1 $. Since the converse is obvious this will prove our claim that the above inclusion is isometric.\\
 Indeed, $\|t \|_{\wedge}<1$ implies 
(equivalently) that the associated map $v: Y \to B(H)$ admits
for some $n$ a factorization as  $v=ba$ as follows
$$v:  Y {\buildrel a \over \rightarrow S_1^n}
{\buildrel b \over \rightarrow  } B(H)$$ where 
$\|b\|_{cb} \le 1$ and  the tensor $t_a \in Y^* \otimes S_1^n $ associated
to $a$  satisfies
$\|t_a\|_{ Y^* \otimes_{\wedge} S_1^n} <1$. Since $v(Y)\subset X$  
we have $b( a(Y)) \subset X$. Let $S=a(Y)\subset S_1^n$ and let $u=b_{|S}:S \to X$. Let $\tilde  u:  S_1^n\to  X$ be the map given by (iii) (for an $\vp$ to be specified). 
 Clearly $v=\tilde u a$ or equivalently
 $t= ( Id_{Y^*}  \otimes \tilde  u ) ( t_a)$ and hence
  $\|t\|_{Y^* \otimes_{\wedge} X}\le  \|\tilde  u\|_{cb} \|t_a\|_{ Y^* \otimes_{\wedge} S_1^n}  \le (1+\vp) \|t_a\|_{ Y^* \otimes_{\wedge} S_1^n}  $,
  and for $\vp$ chosen small enough this is $<1$, as announced. This proves our claim.
 
When we apply this  to $Y=X$,   the linear form $t\mapsto {\tr}(t)$
is   of unit norm on ${X^* \otimes_{\wedge} X}$, and hence the latter claim implies that
$$\forall t\in X^* \otimes X   \quad |{\tr}(t)| \le \| t\|_{X^* \otimes_{\wedge} B(H)}.$$
Then any Hahn-Banach extension of the linear form
$t\mapsto {\tr}(t)$   of unit norm on ${X^* \otimes_{\wedge} B(H)}$,
defines a complete contraction $T: B(H) \to X^{**}$ such that
$T_{|X} =i_X$. This proves (iii) $\Rightarrow$ (i).   \end{proof}
              
     \begin{cor}\label{cp1}  An o.s.        $X $  has  the  WEP
      if  (and  only  if)  
      for  any  $n\ge  1$  and  any  subspace  $S\subset S_1^n$
          any  linear  map  $u:S  \to  X$  admits
          for  each  $\vp>0$    an  extension
          $\tilde  u:  S_1^n\to  X$  with
          $$\|      \tilde  u\|_{cb}  \le  (1+\vp)\|          u\|_{cb}.$$
              \end{cor}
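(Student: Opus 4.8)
The plan is simply to read this off Proposition~\ref{p1w}. First I would note that the hypothesis appearing in the Corollary is, word for word, condition (iii) of Proposition~\ref{p1w}: for every $n\ge 1$ and every subspace $S\subset S_1^n$, every linear $u:S\to X$ admits for each $\vp>0$ an extension $\tilde u:S_1^n\to X$ with $\|\tilde u\|_{cb}\le (1+\vp)\|u\|_{cb}$. On the other hand, $X\subset B(H)$ having the WEP means, by the Definition, that there is $T:B(H)\to X^{**}$ with $\|T\|_{cb}=1$ and $T_{|X}=i_X$; since any such $T$ extends the complete isometry $i_X:X\to X^{**}$, one automatically has $\|T\|_{cb}\ge 1$, so requiring $\|T\|_{cb}=1$ is the same as requiring $\|T\|_{cb}\le 1$, i.e.\ it is exactly condition (i) of Proposition~\ref{p1w}. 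I would also recall, for well-posedness of the statement, that neither the WEP nor condition (iii) depends on the particular completely isometric embedding $X\subset B(H)$, as was observed just after the Definition.

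With these identifications in place, the Corollary is nothing but the equivalence (i)~$\Leftrightarrow$~(iii) already established in Proposition~\ref{p1w}, so there is nothing further to do. If one wishes to locate where the actual content sits, it is the implication (iii)~$\Rightarrow$~(i): one uses (iii) to obtain, for every o.s.\ $Y$, the isometric inclusion $Y^*\otimes_{\wedge} X\subset Y^*\otimes_{\wedge} B(H)$ (via the factorization of finite-rank maps $Y\to X$ of small projective-tensor norm, together with $S_1^n=M_n^*$), specializes to $Y=X$ to get $|\tr(t)|\le \|t\|_{X^*\otimes_{\wedge} B(H)}$ for all $t\in X^*\otimes X$, and applies Hahn-Banach to the unit-norm functional $t\mapsto \tr(t)$ to produce the complete contraction $T:B(H)\to X^{**}$ with $T_{|X}=i_X$. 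The converse implication (i)~$\Rightarrow$~(iii) uses the injectivity of $B(H)$ to extend $u$ into $X^{**}$, and then local reflexivity together with Mazur's theorem and a perturbation argument (Remark~\ref{pertu}) to push the extension back into $X$ at the cost of a factor $(1+\vp)$.

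Thus the only ``step'' here is the bookkeeping of matching the Corollary's statement with conditions (i) and (iii) of Proposition~\ref{p1w}; there is no separate obstacle, the real work having already been carried out in that proposition.
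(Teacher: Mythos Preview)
Your proposal is correct and matches the paper's approach: the Corollary is stated without proof in the paper, being an immediate restatement of the equivalence (i)~$\Leftrightarrow$~(iii) of Proposition~\ref{p1w}. Your additional recap of why those implications hold is accurate but goes beyond what the paper itself spells out here.
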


             \section{The OLLP}\label{llp}
             
  \begin{dfn} 
     We  define  the  LLP  for  a  $C^*$-algebra  $A$
  by  the  equality        $A\otimes_{\min}  {\mathscr{B}}=A\otimes_{\max}  {\mathscr{B}}$,  where
  ${\mathscr{B}}=B(\ell_2)$.  
  \end{dfn}
  Kirchberg    showed  that  this  property  is  equivalent  to  a  
  certain  local  lifting  property  (analogous  to  that  of  $L_1$  in  Banach  space  theory),
  which  has  several  equivalent  forms,  one  of  which  as  follows:
  
  \begin{pro}\label{p3}  A  $C^*$-algebra  $A$  satisfies  $A\otimes_{\min}  {\mathscr{B}}=A\otimes_{\max}  {\mathscr{B}}$
  iff  for  any  $*$-homomorphism  $\pi:  A  \to  C/\I$  into  a  quotient  $C^*$-algebra,
  for  any  f.d.  subspace  $E  \subset  A$  and  any  $\vp>0$  the  restriction  $\pi_{|E}$  admits  a  lifting
  $v:  E  \to  C$  with  $\|v\|_{cb}\le  (1+\vp)$.
  \end{pro}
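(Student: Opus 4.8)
The plan is to prove both implications of this theorem of Kirchberg, the whole point being to produce liftings whose completely bounded norm is $1+\vp$ rather than some fixed absolute constant. Write $\B = B(\ell_2)$. I will use the following standard facts: that the maximal tensor product is exact with respect to quotients, i.e. $A\otimes_{\max}(C/\I) = (A\otimes_{\max}C)/(A\otimes_{\max}\I)$ for every ideal $\I\lhd C$; that the minimal tensor product is injective and functorial for completely positive maps; that for a finite dimensional operator space $E$ one has $CB(E,Z) = E^*\otimes_{\min}Z$ isometrically for every operator space $Z$, so that lifting a completely bounded map out of $E$ is the same problem as lifting a tensor; that $\B$ is injective; that $K(\ell_2)$ is nuclear; and the perturbation principle of Remark \ref{pertu}.

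\emph{From the tensor identity to the lifting property.} Assume $A\otimes_{\min}\B = A\otimes_{\max}\B$, and let $\pi\colon A\to C/\I$, a finite dimensional $E\subset A$ and $\vp>0$ be given. First reduce to $C$ separable, by replacing $C$ by a separable $C^*$-subalgebra $C_0$ with $q(C_0)\supseteq\pi(E)$, where $q\colon C\to C/\I$ is the quotient map; a lift of $\pi_{|E}$ into $C_0$ then gives one into $C$. Next, $q^{**}\colon C^{**}\to(C/\I)^{**}$ is a surjective normal $*$-homomorphism whose kernel is a central summand, so $(C/\I)^{**}$ is a corner of the von Neumann algebra $C^{**}$; since $\pi_{|E}$ is completely contractive (being the restriction of a $*$-homomorphism), it therefore lifts canonically and completely isometrically to $V\colon E\to C^{**}$ with $q^{**}V = \pi_{|E}$. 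It remains to approximate $V$, in $cb$-norm and to within a factor $1+\vp$, by a map with values in $C$ that still lifts $\pi_{|E}$. This is a relative local reflexivity statement, and it is here that the hypothesis enters: translating via $CB(E,\cdot) = E^*\otimes_{\min}(\cdot)$ and the quotient-exactness of $\otimes_{\max}$, the obstruction to such an approximation is measured by the gap between the minimal and maximal norms on a finite tensor in $A\otimes\B$ — the space $\B$ appearing because completely contractive maps out of a finite dimensional space are governed by $\B$-valued tensors, and because after passing modulo $K(\ell_2)$ the nuclearity of $K(\ell_2)$ makes the two tensor norms agree on the ideal — and this gap vanishes by assumption. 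A final weak${}^*$-density argument together with Remark \ref{pertu} upgrades the resulting approximate lift to a genuine $v\colon E\to C$ with $\|v\|_{cb}\le 1+\vp$.

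\emph{From the lifting property to the tensor identity.} Conversely, assume the local lifting property; I must show $\|z\|_{A\otimes_{\max}\B}\le\|z\|_{A\otimes_{\min}\B}$ for $z$ in the algebraic tensor product, the reverse inequality being automatic. Realise the maximal norm through a pair of representations $\pi\colon A\to B(H)$ and $\sigma\colon\B\to B(H)$ with commuting ranges that nearly attains $\|z\|_{A\otimes_{\max}\B}$, and split $\sigma$ into its normal and singular parts. On the normal summand the description of the normal representations of $\B$ shows that $\sigma$ is an amplification of the identity with commutant $1\otimes B(\cl H')$, so $\pi$ factors there through a representation of $A$ on $\cl H'$ and, by functoriality of $\otimes_{\min}$, the contribution of this summand is at most $\|z\|_{A\otimes_{\min}\B}$; the injectivity of $\B$ alone suffices here and no lifting is used. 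On the singular summand $\sigma$ annihilates $K(\ell_2)$ and hence factors through the Calkin algebra $Q = \B/K(\ell_2)$; combining the quotient-exactness of $\otimes_{\max}$ with the nuclearity of $K(\ell_2)$ (so that $A\otimes_{\min}K(\ell_2) = A\otimes_{\max}K(\ell_2)$), the local lifting property — applied to the finite dimensional subspaces of $A$ through the quotient map onto $Q$ — bounds this singular contribution by $\|z\|_{A\otimes_{\min}\B}$ as well, and the two estimates together finish the proof. The hard part throughout is to keep the constant at $1+\vp$ rather than some fixed quantity such as an exactness constant of $E^*$; pinning down that control — equivalently, the translation of the lifting obstruction into the min--max gap of the previous paragraph — is the crux.
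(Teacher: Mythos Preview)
The paper does not prove Proposition~\ref{p3}; it is stated as a known theorem of Kirchberg, with the remark afterwards pointing to \cite[Th.~9.38]{P6} for details. So there is no ``paper's proof'' to compare against, and your attempt must stand on its own.

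Your sketch has a genuine gap in the backward direction (lifting $\Rightarrow$ tensor identity). After the normal/singular split of $\sigma$, the singular piece gives commuting representations $\pi_s:A\to B(H_s)$ and $\bar\sigma:Q\to B(H_s)$, i.e.\ a representation of $A\otimes_{\max}Q$. You then invoke ``the local lifting property --- applied to the finite dimensional subspaces of $A$ through the quotient map onto $Q$.'' But the LLP of $A$ produces liftings of maps \emph{from} finite dimensional pieces of $A$ \emph{into} a quotient $C/\cl I$; here there is no map from $A$ into $Q$, only a representation of $Q$ commuting with $\pi_s(A)$. Nothing in your outline explains how to bound $\|\bar z\|_{A\otimes_{\max}Q}$ by $\|z\|_{A\otimes_{\min}\B}$. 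In fact, a direct argument along these lines would yield a proof of Kirchberg's fundamental theorem $C^*(\F_\infty)\otimes_{\min}\B=C^*(\F_\infty)\otimes_{\max}\B$ (since $C^*(\F_\infty)$ has the LP), and no such short route is known. The standard proof of this implication writes $A$ as a quotient of $C^*(\F_I)$, uses the LLP of $A$ to lift $E\hookrightarrow A$ to $v:E\to C^*(\F_I)$ with $\|v\|_{cb}\le 1+\vp$, and then applies Kirchberg's theorem on $C^*(\F_I)\otimes\B$ to pass from $\min$ to $\max$; you never invoke that theorem.

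The forward direction is closer in spirit to a correct argument, but the paragraph explaining ``where the hypothesis enters'' is muddled: you speak of passing modulo $K(\ell_2)$ and of $\B$-valued tensors, but the tensor you need to lift lives in $E^*\otimes_{\min}(C/\cl I)$, and the relevant obstruction is whether $E^*\otimes_{\min}C\to E^*\otimes_{\min}(C/\cl I)$ is (almost) a quotient map. How the identity $A\otimes_{\min}\B=A\otimes_{\max}\B$ forces this --- the link between the hypothesis on $A$ and a statement about $E^*$ for $E\subset A$ --- is exactly the substance of the proof, and your sketch does not supply it.
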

  \begin{rem} Actually  when  $A$  has  the  LLP  the  preceding  local  lifting  even  holds  with  $\vp=0$. Moreover,
  the conclusion holds for any decomposable $\pi:  A  \to  C/\I$
  with dec-norm equal to $1$ (see \cite[Th. 9.38]{P6}). 
   \end{rem}
  \begin{rem}
 In Kirchberg's original definition in \cite{Kiuhf}, which is equivalent to the above one, 
  a unital $A$ has the LLP if   for any unital c.p. map $\pi: A \to C/\I$
  into  a  quotient  $C^*$-algebra, and any operator system $E  \subset  A$   there is a unital c.p.  map $v:  E  \to  C$ that lifts
  $ \pi_{|E} $.
\\
Similarly,  by his definition $A$  has  the lifting  property  (LP)
    if  any unital c.p. map
      $\pi:  A  \to  C/\I$     admits  a   (``global")  unital  c.p.  lifting.  
  If $A$ is not unital, by definition $A$ has the LLP or the LP if its unitization does.
 Clearly    the LP implies the LLP.
 \\
  Kirchberg \cite{Kiuhf} proved that  the (full) $C^*$-algebra of any countable free group $C^*(\F)$
  has the LP (and a fortiori the LLP). Since any separable $C^*$-algebra $A$ is a quotient of $C^*(\F)$, i.e. we have $A=C^*(\F)/\I$ for some $\I$, 
  the fact that $C^*(\F)$ has the LP (resp. LLP) in Kirchberg's sense
  implies that to check the LP (resp. LLP) for $A$ it suffices to check 
  the liftability (resp. local liftability) of a single bijective $*$-homomorphism
   $\pi: A \to C^*(\F)/\I$. Indeed, the latter implies that the identity of
   $A$ factors (resp. locally factors) via unital c.p. maps through $C^*(\F)$, and hence inherits
   the LP (resp. LLP) from $C^*(\F)$.
    \end{rem}
    
Let $X$ be an operator space, $C$ a $C^*$-algebra, ${\cl I}\subset C$ a (closed,
self-adjoint, 2-sided) ideal in $A$,  with quotient $C^*$-algebra  $C/{\cl I}$.
Consider a completely contractive map  $\varphi\colon X\to C/{\cl I}$. 
We say that $\varphi$ locally lifts (or admits a local lifting)
if for any finite 
dimensional (f.d. in short) subspace $E\subset X$, there exists a completely contractive map 
$\psi\colon E\to C$  such that 
$q\circ\psi =\varphi |_E$, where $q: C \to C/{\cl I}$ is the quotient map.
The OLLP was defined by  Ozawa in \cite{Ozllp}:
\begin{dfn} An o.s. $X$ has the  OLLP if  
any complete contraction $\varphi\colon X\to C/{\cl I}$ into an arbitrary  quotient $C^*$-algebra
locally lifts.
  We say $X$ has the 
 OLP if one can take $E=X$ in the above situation. 
\end{dfn} 
 
Let $C^*\langle X\rangle$ denote the universal $C^*$-algebra of
$X$, characterized by the fact that it contains $X$ as a generating subspace
and any complete contraction on $X$ into $B(H)$ uniquely extends
to a $*$-homomorphism on $C^*\langle X\rangle$ (see e.g. \cite{P4}).
\begin{pro}[\cite{Ozllp}] \label{cor}
The operator space $X$ has OLLP (resp. OLP and is separable) 
if and only if $C^*\langle X\rangle$ has LLP (resp. LP and is separable). 
\end{pro}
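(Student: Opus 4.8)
The plan is to reduce the statement to the defining universal property of $C^*\langle X\rangle$ together with Kirchberg's local lifting characterization recalled in Proposition~\ref{p3}. First I would establish the easy direction: suppose $C^*\langle X\rangle$ has the LLP (resp.\ the LP and is separable). Given a complete contraction $\varphi\colon X\to C/\cl I$, by the universal property $\varphi$ extends uniquely to a $*$-homomorphism $\hat\varphi\colon C^*\langle X\rangle\to C/\cl I$. If $X$ is separable then so is $C^*\langle X\rangle$ (it is generated as a $C^*$-algebra by a separable subspace); conversely separability of $C^*\langle X\rangle$ trivially gives separability of $X$. For a f.d.\ subspace $E\subset X$, applying Proposition~\ref{p3} to $\hat\varphi$ and to the f.d.\ subspace $E\subset C^*\langle X\rangle$ yields a c.c.\ lifting $v\colon E\to C$ with $q\circ v=\hat\varphi_{|E}=\varphi_{|E}$; by the Remark following Proposition~\ref{p3} one can even take $\vp=0$. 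Hence $X$ has the OLLP. In the LP case one instead uses the global unital c.p.\ lifting of the unitization to produce a complete contraction on $X$ lifting $\varphi$, so $X$ has the OLP.

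For the converse, suppose $X$ has the OLLP; I want to show $C^*\langle X\rangle$ has the LLP, equivalently (Proposition~\ref{p3}) that every $*$-homomorphism $\pi\colon C^*\langle X\rangle\to C/\cl I$ locally lifts with arbitrarily small $\vp$. The key observation is that a f.d.\ subspace of $C^*\langle X\rangle$ is, up to an arbitrarily small perturbation, contained in the linear span of finitely many words $x_{i_1}\cdots x_{i_k}$ in elements of $X$; and a c.b.\ (indeed c.c.)\ lifting on the ``degree one'' part $X$ propagates multiplicatively. Concretely: the complete contraction $\pi_{|X}\colon X\to C/\cl I$ is, by the OLLP of $X$, locally liftable; but to lift a whole f.d.\ subspace of $C^*\langle X\rangle$ I need a lifting defined on a fixed f.d.\ $E_0\subset X$ that is multiplicative enough. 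The standard device is to lift $\pi_{|E_0}$ to a complete contraction $\psi\colon E_0\to C$, extend $\psi$ to a $*$-homomorphism on the universal $C^*$-algebra $C^*\langle E_0\rangle$ of the f.d.\ space $E_0$, and then factor the given f.d.\ subspace of $C^*\langle X\rangle$ through (a perturbation of) the image of $C^*\langle E_0\rangle$. Since $C^*\langle E_0\rangle\to C/\cl I$ and its lift $C^*\langle E_0\rangle\to C$ are genuine $*$-homomorphisms, words in $E_0$ are lifted exactly, and one absorbs the perturbation error into $\vp$ via Remark~\ref{pertu}.

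I expect the main obstacle to be exactly this passage from a lifting of the generating subspace $X$ to a lifting of an arbitrary f.d.\ subspace of the universal $C^*$-algebra: one must check that for any f.d.\ $F\subset C^*\langle X\rangle$ and any $\vp>0$ there is a f.d.\ $E_0\subset X$ and an embedding of $F$ (up to $1+\vp$) into the range of the canonical $*$-homomorphism $C^*\langle E_0\rangle\to C^*\langle X\rangle$, so that a c.c.\ lifting of $\pi_{|E_0}$ induces, via the functoriality of $C^*\langle\cdot\rangle$, a c.c.\ lifting of $\pi_{|F}$. This is where the multiplicativity of $*$-homomorphisms does the real work, and where Ozawa's argument in \cite{Ozllp} (which I would follow) uses the universal property decisively. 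The separability bookkeeping in the OLP/LP statement is then routine: $X$ separable $\iff$ $C^*\langle X\rangle$ separable, and the unitization of $C^*\langle X\rangle$ is $C^*\langle X\rangle$ with a unit adjoined, so Kirchberg's LP for $C^*$-algebras transfers without difficulty.
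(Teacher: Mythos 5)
The paper does not actually prove this proposition; it is quoted from Ozawa's paper \cite{Ozllp}, so I am comparing your plan against the standard argument rather than against a proof in the text. Your easy direction is fine. In the converse direction there is a genuine gap, and it sits exactly at the step you yourself flag as the main obstacle. You lift $\pi_{|E_0}$ to a complete contraction $\psi\colon E_0\to C$, extend to a $*$-homomorphism $\hat\psi\colon C^*\langle E_0\rangle\to C$, and then propose to ``factor the given f.d.\ subspace $F$ through (a perturbation of) the image of $C^*\langle E_0\rangle$.'' But that image is $C^*\langle E_0\rangle/\ker\iota$, where $\iota\colon C^*\langle E_0\rangle\to C^*\langle X\rangle$ is the canonical $*$-homomorphism, and composing $\hat\psi$ with a right inverse of $\iota$ on $F$ is itself a local lifting problem for the quotient map $C^*\langle E_0\rangle\to C^*\langle E_0\rangle/\ker\iota$ --- precisely the kind of problem you are in the middle of solving, so as written the step is circular. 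The missing ingredient is that $\iota$ is \emph{injective}, hence a $*$-isomorphism onto the $C^*$-subalgebra $D$ generated by $E_0$: by Arveson--Wittstock every complete contraction $E_0\to B(K)$ extends to a complete contraction $X\to B(K)$, so every representation of $C^*\langle E_0\rangle$ factors through $\iota$ and $\ker\iota=\{0\}$. Once this is in place, $q\hat\psi$ and $\pi\iota$ are $*$-homomorphisms agreeing on the generating subspace $E_0$, hence equal, and $\hat\psi\circ(\iota^{-1})_{|F}$ is an honest completely contractive lifting of $\pi_{|F}$ for $F\subset D$; the passage from an arbitrary f.d.\ $F\subset C^*\langle X\rangle$ to one inside the span of words in some $E_0$ is then absorbed by Remark \ref{pertu} together with the metric surjectivity of $q$, as you indicate.

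Two further remarks. First, Ozawa's own proof of the nontrivial direction does not proceed by word-by-word lifting at all: he works with the tensor-norm characterization of the LLP, showing that the OLLP of $X$ forces the min and max norms to agree on $X\otimes B(\ell_2)$ inside $C^*\langle X\rangle\otimes_{\max}B(\ell_2)$, and then uses the universal property (a complete contraction of $X$ into the commutant of a representation of $B(\ell_2)$ extends to a $*$-homomorphism of $C^*\langle X\rangle$ still landing in that commutant) to propagate the equality to all of $C^*\langle X\rangle\otimes B(\ell_2)$. Your direct argument, once repaired as above, is a legitimate alternative. Second, in the OLP/LP direction the work is not only separability bookkeeping: from a \emph{global} completely contractive lifting $\psi\colon X\to C$ of $\pi_{|X}$ you must pass to the $*$-homomorphism $\hat\psi\colon C^*\langle X\rangle\to C$ it generates and check $q\hat\psi=\pi$ on all of $C^*\langle X\rangle$ (again because both are $*$-homomorphisms agreeing on generators) before unitizing; that is where the global statement is actually proved.
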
 

      \begin{rem}
By  \cite[Prop. 2.9]{Ozllp} a separable o.s. $X$ has the OLLP if and only if
any complete contraction into the Calkin algebra
admits a completely contractive lifting.
 \end{rem}
 
  \begin{rem}
The question whether any separable maximal operator space $X$
has the OLP is equivalent to the well known open
problem 
whether any separable quotient $C^*$-algebra $C/{\cl I}$ admits
a contractive lifting $C/{\cl I} \to C$. It is known (see \cite{TBA}) that this holds
if $X$ has the metric approximation property. See \cite{157} for a more recent
account on this topic. A fortiori,
all maximal operator spaces with the metric approximation property 
have the OLLP.
       Whether this holds without the metric approximation property is an important open
       question put forward   by Ozawa,
       who shows  in  \cite[Prop. 3.1]{Ozllp} that this question is equivalent to asking whether any maximal o.s. is locally reflexive
       (see also \cite{[O3]} and \cite{JO}).
\end{rem}
 
  \begin{rem} Examples of o.s. with the OLLP
      include all preduals of von Neumann algebras (the so-called non-commutative
      $L_1$-spaces). When separable these have the OLP. 
        Moreover the Hardy space $H_1$, 
        and the space $H_1(M_*)$ for any von Neumann algebra $M$, 
        have the OLLP (see \cite[Th.4.2(i)]{JLM}).
 \end{rem}
   
   We will use the following characterization of the OLLP
   from \cite{Ozllp}.

   \begin{thm}\label{op1} An operator space $X\subset B(H)$ has the OLLP iff 
   for any $\vp>0$ and any f.d. subspace $E\subset X$ there are  an integer $n$, a quotient $F$ of $S_1^n$ and a factorization $E {\buildrel v\over
\longrightarrow }  F {\buildrel w\over
\longrightarrow }  X$
   of the inclusion $E\subset X$ with 
   $\|v\|_{cb}\|w\|_{cb}<1+\vp$.  
    \end{thm}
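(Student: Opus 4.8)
The plan is to prove the two implications separately, relying on three standard inputs. First, $S_1^{n}=M_{n}^{*}$, being the predual of a von Neumann algebra, has the OLP: since $CB(S_1^{n},Y)=M_{n}(Y)$ completely isometrically for any operator space $Y$, and $M_{n}(C/\mathcal I)=M_{n}(C)/M_{n}(\mathcal I)$, every complete contraction $S_1^{n}\to C/\mathcal I$ lifts, for each $\vp>0$, to a map $S_1^{n}\to C$ of cb‑norm $\le 1+\vp$. Second, $X$ has the OLLP iff $C^{*}\langle X\rangle$ has the LLP (Proposition~\ref{cor}), which by Kirchberg's criterion is a local lifting statement for $*$‑homomorphisms. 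Third, the perturbation and inductive‑limit techniques of Remarks~\ref{pertu} and \ref{ind}. The implication ``$\Leftarrow$'' is the substantial one.

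$(\Rightarrow)$ Suppose $X$ has the OLLP and fix a f.d.\ $E\subset X$ and $\vp>0$. By Proposition~\ref{cor}, $C^{*}\langle X\rangle\otimes_{\min}\mathscr B=C^{*}\langle X\rangle\otimes_{\max}\mathscr B$ with $\mathscr B=B(\ell_{2})$. Restricting both norms to $X\otimes M_{n}\subset X\otimes\mathscr B$ and dualizing by trace duality, $(M_{n}(X))^{*}=S_1^{n}[X^{*}]$ together with $(S_1^{n}[Y])^{*}=M_{n}(Y^{*})$, the resulting equality of norms on each $M_{n}(X)$ says, after a Hahn--Banach argument of the kind used in the proof of Proposition~\ref{p1w}, that the identity of $E$ factors as $E\to F\to X$ through a quotient $F$ of some $S_1^{n}$ with product of cb‑norms $<1+\vp$. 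Carrying out this dualization carefully is the only nonroutine point of this direction.

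$(\Leftarrow)$ Assume the factorization property. It is equivalent to saying that every f.d.\ $E\subset X$ is $(1+\vp)$‑cb‑isomorphic to a member of $\mathcal E_{1}=\cup_{n}QS_1^{n}$: indeed, in a factorization $E\xrightarrow{v}F\xrightarrow{w}X$ of $\iota_{E}$ the map $v$ is injective (as $wv=\iota_{E}$ is), $v(E)$ lies in $\mathcal E_{1}$ because $\mathcal E_{1}$ is stable under passing to subspaces, and $d_{cb}(E,v(E))\le\|v\|_{cb}\|w\|_{cb}<1+\vp$ since $w|_{v(E)}$ is inverse to $v$; conversely such an isomorphism yields a factorization. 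Since the OLLP depends only on the f.d.\ subspaces of $X$, we may assume $X$ separable; exhausting it by f.d.\ subspaces, replacing each by a $(1+\d_{k})$‑cb‑isomorphic member $F_{k}\in\mathcal E_{1}$ with $\sum_{k}\d_{k}<\infty$, and fitting together the resulting near‑isometries, we present $X$ completely isometrically as an inductive limit $\mathcal L(\{F_{k},\psi_{k}\})$ as in Remark~\ref{ind}. If every $F\in\mathcal E_{1}$ has the OLLP, then so does this limit: a f.d.\ subspace of $\mathcal L(\{F_{k},\psi_{k}\})$ sits $(1+\vp)$‑cb inside some $j_{k}(F_{k})$, a complete contraction of the limit into a quotient $C^{*}$‑algebra restricts to a complete contraction on that copy of $F_{k}$, which lifts by the OLLP of $F_{k}$, and restricting the lift and correcting by Remark~\ref{pertu} lifts the original. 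Thus everything reduces to: every $F\in QS_1^{n}$ has the OLLP, equivalently (being f.d.) the OLP.

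The plan for showing $F=S_1^{n}/\mathcal J$ has the OLP is: given a complete contraction $\eta\colon F\to C/\mathcal I$, compose with the quotient map $q\colon S_1^{n}\to F$, lift $\eta q$ by the OLP of $S_1^{n}$ to $\Psi\colon S_1^{n}\to C$ of cb‑norm $\le 1+\vp$ with $q_{C}\Psi=\eta q$ (so $\Psi(\mathcal J)\subset\mathcal I$), and then remove the ``error'' $\Psi|_{\mathcal J}$. Biduals make the error removable in principle: $\mathcal I^{\perp\perp}$ is a weak$^{*}$‑closed central ideal, hence a direct summand, of the von Neumann algebra $C^{**}$, so $C^{**}\to(C/\mathcal I)^{**}$ splits completely positively and one easily produces a complete contraction $F\to C^{**}$ lifting the canonical image of $\eta$; local reflexivity of the $C^{*}$‑algebra $C$ then brings the lift back down to $C$, after which Remark~\ref{pertu} makes it exact. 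The delicate point, which I expect to be the main obstacle, is that all of this must be done with \emph{completely bounded}, not merely bounded, norm control: plain local reflexivity of a $C^{*}$‑algebra only controls the Banach‑space norm, so one must exploit the specific structure of $F$ as a quotient of $S_1^{n}=M_{n}^{*}$ — in particular the injectivity of $M_{n}$, which is what separates the members of $\mathcal E_{1}$ from arbitrary finite‑dimensional operator spaces. (One cannot circumvent this by lifting $v\colon E\to F$ through $S_1^{n}$ and composing with $\Psi$: a $(1+\vp)$‑lifting of $v$ would force $E^{*}$ to be completely $(1+\vp)$‑complemented in a matrix algebra, which fails in general even when $E^{*}$ is $1$‑exact.) Marshalling the inductive‑limit reduction, the OLP of $S_1^{n}$, the bidual splitting, local reflexivity, and the perturbation lemma into one argument, with the cb‑bookkeeping carried out carefully, is the technical heart of the proof.
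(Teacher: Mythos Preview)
First, note that the paper does \emph{not} prove this theorem: it is quoted from Ozawa \cite{Ozllp} (``We will use the following characterization of the OLLP from \cite{Ozllp}''), so there is no in-paper proof to compare against. Your proposal must therefore stand on its own, and it has real gaps.

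In the $(\Leftarrow)$ direction you assert that ``$\mathcal E_{1}$ is stable under passing to subspaces.'' This is false (or at least unproved and unlikely): $S\subset F\in QS_1^{n}$ gives $S^{*}\simeq F^{*}/S^{\perp}$, a \emph{quotient} of the subspace $F^{*}\subset M_{n}$, and such quotients need not embed completely isometrically in any $M_{m}$. The paper itself carefully distinguishes $\mathcal E_{1}$ from $\ovl{\mathcal{S E}_{1}}$ for exactly this reason (see Remark~\ref{2r1} and Definition~\ref{rd9}). This breaks your reformulation of the hypothesis as ``$E$ is $(1+\vp)$-cb-isomorphic to a member of $\mathcal E_{1}$,'' and with it the inductive-limit reduction.

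More importantly, the whole detour through inductive limits, biduals, and local reflexivity is unnecessary and makes the argument fragile. The clean route is direct: given a complete contraction $\varphi\colon X\to C/\mathcal I$ and $E\subset X$ f.d., take the factorization $E\xrightarrow{v}F\xrightarrow{w}X$ with $F=S_1^{n}/N$ and $\|v\|_{cb}\|w\|_{cb}<1+\vp$, and lift $\varphi w\colon F\to C/\mathcal I$. The point is that $F^{*}=N^{\perp}\subset M_{n}$ is $1$-exact, whence
\[
CB(F,C/\mathcal I)\;=\;F^{*}\otimes_{\min}(C/\mathcal I)\;=\;(F^{*}\otimes_{\min}C)\big/(F^{*}\otimes_{\min}\mathcal I)\;=\;CB(F,C)/CB(F,\mathcal I)
\]
isometrically; so $\varphi w$ lifts to $\psi\colon F\to C$ with $\|\psi\|_{cb}\le(1+\delta)\|w\|_{cb}$, and $\psi v$ is the desired local lifting of $\varphi_{|E}$. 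This is the ``injectivity of $M_{n}$'' you allude to, but used as \emph{exactness of $F^{*}$}, which is exactly what makes the cb-bookkeeping automatic---no bidual splitting or local reflexivity needed.

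Your $(\Rightarrow)$ sketch is too vague to assess: ``dualizing by trace duality\ldots after a Hahn--Banach argument'' does not isolate why a factorization through a \emph{quotient of $S_1^{n}$} (rather than merely through $S_1^{n}$ itself, or through some space whose dual is exact) must appear. Ozawa's argument in \cite{Ozllp} uses the min$=$max equality for $C^{*}\langle X\rangle\otimes\mathscr B$ together with a specific finite-dimensional approximation to produce the intermediate $F$; you would need to supply that step explicitly.
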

    
    \begin{dfn} An o.s. $X$ has the strong OLLP
if   for any $\vp>0$ and any f.d. subspace $E\subset X$ there 
is a   f.d. subspace $E_1$  with $E\subset E_1\subset X$ 
such that there are  an integer $N$ and a quotient $F$ of $S_1^N$  with 
   $d_{cb} (E_1,F)<1+\vp$.  
\end{dfn}

By perturbation arguments (see Remark \ref{pertu}), Theorem \ref{op1} implies:
 
\begin{cor}\label{c1} Assume $X=\ovl{\cup E_n}$ where $(E_n)$ is an increasing sequence of f.d. subspaces, such that all the inclusions $E_n \to X$ satisfy the   factorization in Theorem \ref{op1}.
Then $X$ has the OLLP.
In particular, this holds if, for each $n$, $E_n$ is completely isometric
to a quotient of $S_1^N$ for some $N=N(n)$. In the latter case, $X$ has the strong OLLP.
\end{cor}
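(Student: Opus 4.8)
The plan is to deduce this directly from the criterion in Theorem~\ref{op1}. So I fix an arbitrary f.d.\ subspace $E\subset X$ and $\vp>0$, and I must exhibit an integer $m$, a quotient $F$ of $S_1^m$, and a factorization $E\xrightarrow{v}F\xrightarrow{w}X$ of the inclusion with $\|v\|_{cb}\|w\|_{cb}<1+\vp$. The key idea is that after applying a complete automorphism of $X$ which is a small perturbation of the identity, $E$ can be moved inside one of the $E_n$, and there the hypothesis furnishes the desired factorization; the perturbation estimates of Remark~\ref{pertu} then control the loss.

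Concretely, I would first choose a basis $x_1,\dots,x_k$ of $E$ together with biorthogonal functionals $\xi_1,\dots,\xi_k\in X^*$. Since $\cup_nE_n$ is dense in $X$ and $(E_n)$ is increasing, for a parameter $\vp'>0$ to be fixed later there are an index $n$ and vectors $y_1,\dots,y_k\in E_n$ with $\max_j\|x_j-y_j\|\le\vp'$. Remark~\ref{pertu} then supplies, for $\vp'$ small enough (depending on the fixed data $E,\xi_j$), a complete isomorphism $w_0=Id_X+\sum_j\xi_j\otimes(y_j-x_j)\colon X\to X$ with $w_0(x_j)=y_j$ and $\|w_0\|_{cb}\|w_0^{-1}\|_{cb}\le1+\d(\vp')$, where $\d(\vp')\to0$ as $\vp'\to0$; in particular $w_0(E)=\mathrm{span}\{y_1,\dots,y_k\}\subset E_n$. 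By hypothesis the inclusion $\iota_n\colon E_n\to X$ factors as $E_n\xrightarrow{a}F\xrightarrow{b}X$ through a quotient $F$ of some $S_1^m$ with $\|a\|_{cb}\|b\|_{cb}<1+\vp''$. Taking $v=a\circ w_0|_E\colon E\to F$ and $w=w_0^{-1}\circ b\colon F\to X$, one checks that $w\circ v$ is exactly the inclusion $E\hookrightarrow X$: for $x\in E$ we have $w_0(x)\in E_n$, hence $b(a(w_0(x)))=\iota_n(w_0(x))=w_0(x)$ and $w_0^{-1}(w_0(x))=x$. Meanwhile $\|v\|_{cb}\|w\|_{cb}\le\|w_0\|_{cb}\|w_0^{-1}\|_{cb}\,\|a\|_{cb}\|b\|_{cb}\le(1+\d(\vp'))(1+\vp'')$, which is $<1+\vp$ once $\vp'$ and $\vp''$ are chosen small. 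By Theorem~\ref{op1} this gives the OLLP.

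For the remaining two assertions: if each $E_n$ is completely isometric to a quotient of $S_1^{N(n)}$, then the inclusion $E_n\to X$ trivially admits the factorization of Theorem~\ref{op1} (take $F=E_n$ and $v$ the identity, so the product of cb-norms equals $1$), hence $X$ has the OLLP by the first part. For the strong OLLP, given $E\subset X$ and $\vp>0$ I would run the same perturbation step to obtain $n$ and $w_0$ with $w_0(E)\subset E_n$ and $\|w_0\|_{cb}\|w_0^{-1}\|_{cb}<1+\vp$, then set $E_1:=w_0^{-1}(E_n)$; then $E\subset E_1\subset X$, $E_1$ is f.d., and $w_0|_{E_1}\colon E_1\to E_n$ yields $d_{cb}(E_1,E_n)\le\|w_0\|_{cb}\|w_0^{-1}\|_{cb}<1+\vp$, so since $E_n$ is completely isometric to a quotient $F$ of $S_1^{N(n)}$ we get $d_{cb}(E_1,F)<1+\vp$, as required. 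The only genuinely delicate point I anticipate is ensuring that the map $w_0$ from Remark~\ref{pertu} is a complete automorphism of \emph{all} of $X$ (which needs the $\xi_j$ to be extended to $X^*$ by Hahn--Banach), so that $w_0^{-1}$ can legitimately be pre- and post-composed with the factorization of $\iota_n$; the rest is routine bookkeeping with cb-norms.
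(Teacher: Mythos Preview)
Your argument is correct and is precisely the perturbation argument the paper leaves implicit (the paper merely says ``By perturbation arguments (see Remark~\ref{pertu}), Theorem~\ref{op1} implies'' and states the corollary without further detail). Your worry about $w_0$ being a global automorphism of $X$ is already handled by Remark~\ref{pertu} itself, which produces $w_0=Id_X+\sum_j\xi_j\otimes(y_j-x_j)$ with $\xi_j\in X^*$, so there is nothing delicate left.
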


We do not know whether the OLLP implies the strong OLLP.

It seems worthwhile to enlarge the preceding definitions like this:

\begin{dfn}\label{D1} A map $u: X \to Y$ between o.s. will be called
OLLP if for any $\vp>0$ and any f.d. subspace $E\subset X$ there are  an integer $n$, a quotient $F$ of $S_1^n$ and a factorization $E {\buildrel v\over
\longrightarrow }  F {\buildrel w\over
\longrightarrow }  Y$
   of the inclusion $u_{|E}$ with 
   $\|v\|_{cb}\|w\|_{cb}<1+\vp$.  (When $u=Id_X$ this means that $X$ has the OLLP.)
\end{dfn}

\begin{rem}
It is easy to see  that if a map $u: X \to Y$ is OLLP then for any complete contraction
$\varphi: Y \to C/\cl I$ (into an arbitrary quotient $C^*$-algebra) the composition $\varphi u: X \to C/\cl I$ is locally liftable.
The converse can also be checked by the same argument as the one used by Ozawa in
\cite{Ozllp} for the case $u=Id_X$.
\end{rem}

Let us denote by $QS_1^n$ the class of o.s. quotients of $S_1^n$.
We denote  $$\cl E_1=\cup_n QS_1^n.$$

\begin{rem}\label{2r1} Let $X\subset B(H)$ be an o.s.   Then the inclusion   $X\subset B(H)$
(or any completely isometric embedding into $B(H)$)
is an OLLP map if and only if  for any f.d. subspace $E\subset X$
there are  an integer $n$ and   $F\in \cup_n QS_1^n$
such that $E$ embeds completely $ (1+\vp)$-isometrically into $F$.
Indeed, this follows from the injectivity of $B(H)$.
\end{rem}
 Anticipating a bit over Definition \ref{rd9},
  we will say that $X$   locally embeds in  $\cl E_1=\cup_n QS^n_1$ if it satisfies
the property in Remark \ref{2r1}.
 
\begin{pro}\label{op2} For an o.s. $X$ that  locally embeds in  $\cl E_1$,
  the WEP implies  the OLLP.
\end{pro}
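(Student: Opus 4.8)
The plan is to combine the local-embedding hypothesis with the WEP characterization from Corollary~\ref{cp1} and the OLLP characterization from Theorem~\ref{op1}. Fix an o.s.\ $X$ that locally embeds in $\cl E_1$, assume $X$ has the WEP, and let $E\subset X$ be a f.d.\ subspace and $\vp>0$. By the local-embedding property (Remark~\ref{2r1}), there are an integer $n$ and a quotient $F$ of $S_1^n$ together with a complete $(1+\vp)$-isometric embedding $j\colon E\to F$. So we have the inclusion $E\subset X$ and a second copy of $E$ sitting inside $F=S_1^n/S$ for some subspace $S\subset S_1^n$. The map $w_0\colon j(E)\to X$ given by $w_0=(\text{inclusion}_{E\subset X})\circ j^{-1}$ has $\|w_0\|_{cb}\le 1+\vp$.

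The key step is to extend $w_0$ from $j(E)\subset F$ to all of $F$ with control on the cb-norm; this is exactly where WEP enters. Since $F=S_1^n/S$, giving a complete contraction (up to $1+\vp$) on $F$ is the same as giving a complete contraction on $S_1^n$ that annihilates $S$. Consider instead the composition $S_1^n \to F \supset j(E) \xrightarrow{w_0} X$, defined on the preimage of $j(E)$ in $S_1^n$, i.e.\ on the subspace $\widetilde S = q^{-1}(j(E))\subset S_1^n$ where $q\colon S_1^n\to F$ is the quotient map. This is a linear map $u\colon \widetilde S\to X$ with $\|u\|_{cb}\le 1+\vp$ (note $\|q\|_{cb}\le 1$). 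By Corollary~\ref{cp1} (the WEP of $X$), for each $\d>0$ the map $u$ extends to $\tilde u\colon S_1^n\to X$ with $\|\tilde u\|_{cb}\le(1+\vp)(1+\d)$. Now $\tilde u$ kills $S$ (because $S\subset \widetilde S$ and $u$ kills $S$, since $q$ does and $u$ factors through $q$ on $\widetilde S$; alternatively one replaces $S$ by $\widetilde S\cap S = S$), hence $\tilde u$ descends to $w\colon F\to X$ with $\|w\|_{cb}\le(1+\vp)(1+\d)$. Setting $v=j\colon E\to F$ we obtain the factorization $E\xrightarrow{v}F\xrightarrow{w}X$ of the inclusion $E\subset X$ with $\|v\|_{cb}\|w\|_{cb}\le(1+\vp)^2(1+\d)$, which for suitable $\vp,\d$ is $<1+\vp'$ for any prescribed $\vp'$. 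By Theorem~\ref{op1}, $X$ has the OLLP.

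I expect the main obstacle to be the bookkeeping around the subspace $S$: one must be careful that the map $u$ defined on $\widetilde S$ genuinely annihilates $S$ so that the WEP-extension $\tilde u$ still annihilates $S$ and therefore factors through the quotient $F=S_1^n/S$. This is automatic because $u$ was \emph{defined} as $w_0\circ q$ on $\widetilde S$ and $q(S)=0$; but it requires observing that extending $u$ (rather than directly extending $w_0$) is the right move, since $S_1^n$—not its quotient $F$—is the space for which WEP supplies extensions (Corollary~\ref{cp1} is stated for subspaces of $S_1^n$). A secondary technical point is that $\tilde u$ restricted to $\widetilde S$ equals $u$ exactly, not merely approximately, so the descended map $w$ restricted to $j(E)$ equals $w_0$ and hence $w\circ v$ is genuinely the inclusion $E\subset X$, not just a perturbation of it; if one only had approximate agreement, a final application of the perturbation lemma (Remark~\ref{pertu}) would repair it, absorbed into the $(1+\d)$ factor.
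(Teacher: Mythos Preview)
Your proof is correct, and it takes a genuinely different route from the paper's. The paper extends the inclusion $E\subset X$ to a map $F\to X^{**}$ using the WEP directly (Proposition~\ref{p1w}), then exploits the $1$-exactness of $F^*$ to identify $F^*\otimes_{\min}X^{**}=(F^*\otimes_{\min}X)^{**}$, obtains a net of maps $F\to X$ approximating the extension weak*, passes to convex combinations to get norm convergence on $E$, and finishes with a perturbation argument (Remark~\ref{pertu}). Your argument instead pulls $j(E)\subset F=S_1^n/S$ back to $\widetilde S=q^{-1}(j(E))\subset S_1^n$ and applies Corollary~\ref{cp1} as a black box to extend within $X$ itself; since the extension agrees \emph{exactly} on $\widetilde S$ and hence vanishes on $S$, it descends to $F$ and no perturbation is needed. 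Your route is more elementary in that it avoids the bidual and the local reflexivity/weak* density step entirely; the paper's route, on the other hand, makes transparent why the hypothesis $F\in\cl E_1$ (equivalently $F^*$ $1$-exact) is the natural one, and would adapt more readily to variants where the WEP extension lands only in $X^{**}$.
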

\begin{proof}  Let $E\subset X$ be a f.d. subspace. Let us  assume $E\subset F$
   (completely  isometrically)  for some  $F\in \cl E_1$.
   If $X$ has the WEP
   the inclusion $v: E\subset X$ extends to $\tilde v: F \to X^{**}$
   with $\|\tilde v\|_{cb} \le 1 $.
   Since $F^*\subset M_n$ for some $n$, (in particular $F^*$ is 1-exact),
   this guarantees that $F^*\otimes_{\min}  X^{**}=(F^*\otimes_{\min}  X)^{**}$ isometrically.
   It follows that there is a net of maps $v_i: F \to X$
   with $\|v_i\|_{cb} \le 1 $ that tends weak* to $\tilde v$. 
   Recall that $\tilde v(E)\subset X$. Passing to convex suitable convex combinations
   we find a similar net such that ${v_i}_{|E}$   tends in norm to  ${\tilde v}_{|E}=v$.
   By norm-perturbation (see Remark \ref{pertu}), we obtain  $w: F \to X$ with $\|w\|_{cb}\le 1+\vp $ such that ${w}_{|E}=u$.
   Whence a factorization $E{\buildrel v\over
\longrightarrow } F {\buildrel w\over
\longrightarrow } X$ as required in Theorem \ref{op1}.
Here we simplified slightly by assuming $v: E \to F$ is completely  isometric,
but it is easy to adapt the argument to cover the completely $ (1+\vp)$-isometric case.
\end{proof}

The preceding statement is analogous to the fact that for 
a $C^*$-algebra that locally embeds in $\C=C^*(\F_\infty)$
(or equivalently locally embeds in the class $\cl E_{S\C}$ appearing in Remark \ref{3/4}) the WEP implies the 
LLP (see \cite[Prop. 3.7]{P7}).

\section{Main tools}\label{mt}

The following key statement   is called the ``push out lemma"
in \cite{Kus}. It seems to have deep roots in category theory, as emphasized in Pedersen's   \cite{Ped}.
In the Banach space context, this same construction 
was first used by Kisliakov \cite{Kis} (without any name for it), and later became the basic building block for the author's construction in \cite{P0} (see also  \cite{BoP}).
It was adapted to the operator space context
by Oikhberg  in \cite{Oi}. Instead of reproducing Oikhberg's argument in \cite{Oi}
 (which uses \eqref{de9})  we choose  a  presentation
 based on the space $S_1[E] $ described  in   \eqref{S1} that (although less self-contained) emphasizes more clearly
the parallelism between the Banach and operator space cases.

\begin{lem}\label{l1} Let $E,L$ be o.s.
 Let $S\subset L$ be a subspace   and let
 $u: S \to E$ be a c.b. map. Let $G_{u}=\{ (s,-us)\mid s\in S\}$.
 Let $E_1=[L \oplus_1 E] / G_{u}$, let $Q: [L \oplus_1 E]   \to [L \oplus_1 E] / G_{u}$
 denote the quotient map and let $j: E \to E_1$ and $\tilde u: L \to E_1$
 be defined by
 $$\forall e\in E\quad j(e)=Q(0\oplus e) \quad\text{   and   }\quad \forall x\in L\quad \tilde u(x)=Q(x\oplus 0).$$
Then $j$ is injective and $\tilde u$ extends $u$ in the sense that $\tilde u_{|S}= j u$ and
$\|\tilde u\|_{cb}\le 1$.  \\
Moreover, if $\|u\|_{cb}\le 1 $   then    $j$ is a complete isometry.\\
Lastly, if $u$ is completely isometric then so is $\tilde u$.\\
More precisely, if  $u$ is injective with $\|u\|_{cb}\le 1$
 then
\begin{equation}\label{e1}   \| {{\tilde {u}}^{-1}\,}_{|\tilde u(L)} \|_{cb}= \| {u^{-1}}_{|u(S)} \|_{cb}    .\end{equation}
\end{lem}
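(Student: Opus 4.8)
The plan is to deduce everything from two universal properties together with one soft fact. The universal properties are: (a) that of the quotient operator space $E_1=[L\oplus_1 E]/G_{u}$, namely that a c.b. map out of $E_1$ is exactly a c.b. map out of $L\oplus_1 E$ that annihilates $G_{u}$, with the same cb-norm; and (b) that of the $\ell_1$-direct sum, namely (by \eqref{pde9}--\eqref{de9}) that a linear map $L\oplus_1 E\to Y$ is completely contractive if and only if its two coordinate restrictions (to $L$ and to $E$) are. The soft fact is the injectivity of $B(H)$. I note first that the ``moreover'' and ``lastly'' assertions of the lemma follow formally once we have \eqref{e1} together with $\|\tilde u\|_{cb}\le1$ and injectivity of $\tilde u$: for instance if $u$ is completely isometric then $c:=\|u^{-1}_{|u(S)}\|_{cb}=1$, so \eqref{e1} gives $\|\tilde u^{-1}_{|\tilde u(L)}\|_{cb}=1$, which with $\|\tilde u\|_{cb}\le1$ and $\tilde u$ injective forces $\tilde u$ completely isometric; and ``$\|u\|_{cb}\le1\Rightarrow j$ completely isometric'' will be proved as a lemma on the way to \eqref{e1}. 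So it suffices to treat the elementary assertions and then \eqref{e1}.

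For the elementary part: $G_{u}$ is closed since $S$ is closed and $u$ bounded, so the quotient makes sense. The identity $(s\oplus0)-(0\oplus u(s))=(s,-u(s))\in G_{u}$ shows $\tilde u(s)=Q(s\oplus0)=Q(0\oplus u(s))=j(u(s))$, i.e.\ $\tilde u_{|S}=ju$. Writing $\iota_L\colon L\to L\oplus_1 E$ and $\iota_E\colon E\to L\oplus_1 E$ for the canonical inclusions, \eqref{de9} applied with the other coordinate set to $0$ shows $\iota_L$ and $\iota_E$ are completely isometric; since $\tilde u=Q\iota_L$ and $j=Q\iota_E$ are composites of complete contractions, $\|\tilde u\|_{cb}\le1$ and $\|j\|_{cb}\le1$. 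Injectivity of $j$ is immediate, as $Q(0\oplus e)=0$ means $(0,e)\in G_{u}$, which forces $e=0$; and if $u$ is injective the same reasoning gives $\tilde u$ injective.

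Next comes the complete isometry of $j$ when $\|u\|_{cb}\le1$, which also yields the lower bound in \eqref{e1}. Embed $E\subset B(H)$ completely isometrically; by injectivity of $B(H)$ extend $u\colon S\to E\subset B(H)$ to $\bar u\colon L\to B(H)$ with $\|\bar u\|_{cb}\le1$. By (b) the map $(\xi,\eta)\mapsto\bar u(\xi)+\eta$ is a complete contraction $L\oplus_1 E\to B(H)$, and it annihilates $G_{u}$ (because $\bar u(s)-u(s)=0$), so by (a) it descends to $r\colon E_1\to B(H)$ with $\|r\|_{cb}\le1$ and $rj=(E\hookrightarrow B(H))$; hence $j$ is completely isometric. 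Then from $\tilde u_{|S}=ju$ and $j$ completely isometric we get $\|(\tilde u_{|S})^{-1}\|_{cb}=\|u^{-1}_{|u(S)}\|_{cb}=c$, and restricting $\tilde u^{-1}_{|\tilde u(L)}$ to the subspace $\tilde u(S)\subset\tilde u(L)$ gives the lower bound $\|\tilde u^{-1}_{|\tilde u(L)}\|_{cb}\ge c$.

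The upper bound $\|\tilde u^{-1}_{|\tilde u(L)}\|_{cb}\le c$ is the step I expect to be the main obstacle, and the whole point is a correct ``budget split''. Embed $L\subset B(\cl K)$ completely isometrically; the map $\tfrac1c u^{-1}_{|u(S)}\colon u(S)\to S\subset L\subset B(\cl K)$ has cb-norm $\le1$, so by injectivity of $B(\cl K)$ it extends to a complete contraction $\theta\colon E\to B(\cl K)$. Pair $\theta$ with $\tfrac1c\iota_L\colon L\to B(\cl K)$: by (b) this gives a complete contraction $g\colon L\oplus_1 E\to B(\cl K)$, $g(\xi,\eta)=\tfrac1c\xi+\theta(\eta)$, and the \emph{same} scalar $\tfrac1c$ makes $g$ annihilate $G_{u}$, since $g(s,-u(s))=\tfrac1c s-\theta(u(s))=\tfrac1c s-\tfrac1c s=0$. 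By (a), $g$ descends to $h\colon E_1\to B(\cl K)$ with $\|h\|_{cb}\le1$ and $h\tilde u(\xi)=g(\xi\oplus0)=\tfrac1c\xi$, i.e.\ $h\tilde u=\tfrac1c(L\hookrightarrow B(\cl K))$. Hence for every $n$ and every $x\in M_n(L)$, $\|x\|_{M_n(L)}=c\,\|(Id_{M_n}\otimes h\tilde u)(x)\|\le c\,\|(Id_{M_n}\otimes\tilde u)(x)\|_{M_n(E_1)}$, which is precisely $\|\tilde u^{-1}_{|\tilde u(L)}\|_{cb}\le c$. Combining with the lower bound proves \eqref{e1}; the delicate point, as indicated, is the bookkeeping forcing the $1/c$ in front of $\iota_L$ and the $1/c$ inside $\theta$ to be the same scalar, so that the unit cb-budget on each $\oplus_1$-coordinate is respected while $G_{u}$ is still killed. (If $c=\infty$ the upper bound is vacuous and \eqref{e1} reads $\infty=\infty$, already given by the lower bound.)
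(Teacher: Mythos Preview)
Your proof is correct and takes a genuinely different route from the paper's. The paper first establishes the Banach space version by direct computation of the quotient norm (e.g.\ $\|je\|=\inf_{s\in S}\{\|s\|+\|e-us\|\}$ and $\|\tilde u(x)\|=\inf_{s\in S}\{\|x+s\|+\|us\|\}$, together with the elementary chain \eqref{FF}), and then transfers the result to cb-norms via the functor $X\mapsto S_1[X]$, using that $\|u\|_{cb}=\|Id_{S_1}\otimes u\|$ and that $S_1[\,\cdot\,]$ commutes with $\oplus_1$ and with quotients. You instead work with cb-norms throughout, exploiting the universal properties of the $\oplus_1$-sum and of the operator space quotient, and produce explicit one-sided inverses by extending $u$ (resp.\ $\tfrac{1}{c}u^{-1}_{|u(S)}$) to all of $L$ (resp.\ $E$) using the injectivity of $B(H)$.

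What each buys: the paper's reduction to the Banach case highlights the complete parallelism between the two theories and packages the o.s.\ passage into a single appeal to known $S_1[\,\cdot\,]$-identities; your argument is more categorical and avoids introducing $S_1[\,\cdot\,]$, but trades that for the Arveson--Wittstock extension theorem (injectivity of $B(H)$), a nontrivial input the paper's proof does not need. Two small presentational remarks: the implication $c\ge 1$ (needed so that $\tfrac{1}{c}\iota_L$ is a complete contraction) follows from $\|u\|_{cb}\le 1$ and $\|u^{-1}\|_{cb}\|u\|_{cb}\ge 1$, and is worth stating; and the universal property (b) you invoke is not \emph{literally} \eqref{de9} but its well-known consequence $\|v\|_{cb}=\max\{\|v_{|L}\|_{cb},\|v_{|E}\|_{cb}\}$, obtained by embedding the target completely isometrically into some $B(\cl H)$.
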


\begin{proof} 
We start by quickly reviewing the Banach space case,
by which we mean the same statement but with
ordinary norms  instead of the cb-ones everywhere.
$$\xymatrix{&L\ar[r]^{ \tilde u }& E_1 \\
& S\ \ \ar@{^{(}->}[u] 
 \ar[r]^{u} & E\ar@{^{(}->}[u]_{j} }$$
The injectivity of $j$ and the assertions $\tilde u_{|S}=u$
and $\|\tilde u\|\le 1$ are immediate. For $e\in E$
we have
$\|je\|=\inf_{s\in S}\{\|s\|+\|e-us\|\}$ and hence (take $s=0$)
$\|je\|\le \|e\|$, but also $\|s\|+\|e-us\|\ge \|s\|+\|e\|-\|us\|$ and hence
$\|je\|\ge \|e\|$ if $\|u\|\le 1$. Thus $j$ is isometric.
We have
$\|\tilde u(x)\|=\inf_{s\in S}\{\|x+s\|+\|us\|\}$  for all $x\in L$.
Assuming $u$ injective, let $c=\| {u^{-1}}_{|u(E)} \| $. Assume $\|u\|\le 1$ so that $c\ge 1$.
Then,
\begin{equation}\label{FF}
\|x+s\|+\|us\|\ge \|x+s\|+c^{-1}\|s\|\ge c^{-1}(\|x+s\|+ \|s\|)\ge c^{-1}  \|x\|.
\end{equation} 
Therefore $\|\tilde u(x)\|\ge c^{-1}  \|x\|$. 
Equivalently   $\| {{\tilde {u}}^{-1}\,}_{|\tilde u(L)} \| \le c$.
Since  $  u$ is the restriction   of $\tilde u$ 
(meaning $\tilde u_{|S}=j u$) we must have $c\le\| {{\tilde {u}}^{-1}\,}_{|\tilde u(L)} \| $.
This gives us  \eqref{e1} for the norms.  \\
To deduce from this Lemma \ref{l1} as stated above
it will be convenient to use the ``vector valued trace class"
defined in \cite{87.b.}, as follows.
Let  $K$ denote the space of compact operators on $\ell_2$.
Let $S_1=K^*$ (the trace class) equipped with its dual o.s. structure.
For any o.s. $X$ we denote $S_1[X]$ the (o.s. sense) projective tensor
product of $S_1$ with $X$, described in \eqref{S1}.
We will use
the following facts.
 Let $u: X \to Y$ be a map between o.s.
Then $\|u\|_{cb}= \|Id \otimes u : S_1[X] \to S_1[Y] \|$. Moreover 
$u$ is completely isometric if and only if 
  $Id \otimes u : S_1[X] \to S_1[Y] $ is isometric.
We have $S_1[X \oplus_1 Y]=S_1[X] \oplus_1 S_1[Y] $ (isometrically)
and for any subspace $S \subset X$ we have
$S_1[X /S]= S_1[X] /S_1[S]$ (isometrically). 
Actually these identities are all completely isometric but we do not need that here.
For all these facts 
we refer to our ast\'erisque memoir \cite{87.b.} or \cite[p. 142]{P4}.

Using this it suffices to apply the first part of the proof  
to $Id \otimes u : S_1[S] \to S_1[E]$ with $ S_1[S] \subset S_1[L]$. 
Observe that by the preceding  facts $[S_1[L] \oplus_1 S_1[E] ]/ S_1[G_{u}]$
can be identified naturally with $S_1[E_1]$. Then
  $\tilde{Id \otimes u}: S_1[L] \to S_1[E_1]$  
 can  be identified with $Id \otimes \tilde{ u}$ and
 the associated isometric embedding  $S_1[E] \to S_1[E_1]$
 can  be identified with $Id \otimes j$.
 With  these identifications, Lemma \ref{l1} becomes immediate by the Banach space case.
\end{proof}

\begin{rem} The space $E_1$  
is the solution of a universal problem.
Given $S\subset L$ and $u: S \to E$, assume that we have an o.s.  space $Z$ and operators
$v: L \to Z$ and $w: E \to Z$ such that 
$v_{|S}=wu$.  
Then there is a \emph{unique} operator $T: E_1\to Z$ such that
 $v=T\tilde u$ and $w=Tj$; moreover,
 we have $\|T\|_{cb} \le \max\{ \|v\|_{cb},\|w\|_{cb}  \}$.
 It is easy to see by diagram chasing that if 
 there exists an $E_1$ satisfying this, then it
  is unique up to complete isometry, and it does exist since the space $E_1$ in
  Lemma \ref{l1} clearly has this universal property.
\end{rem}

\begin{rem}[$E_1/E \simeq L/S  $]
As a supplement to Lemma \ref{l1}: the space  $E_1/E$ is completely isometrically isomorphic to $L/S$. More precisely,
there is a completely isometric isomorphism $\check u: L/S \to E_1/E$ induced by the map
$\tilde u: L \to E_1$. The verification is an easy exercise. The point is that the possible metric lifting  properties of $L\to L/S$ are all inherited by $E_1\to E_1/E$.
\end{rem}
Lemma \ref{l1} admits the following immediate
generalization:

\begin{lem}\label{l1'} Let $E$ be an o.s.
Let $ (L_i)_{i\in I} $ be a family of o.s. with subspaces
   $S_i\subset L_i$  and c.b. maps
 $u_i: S_i \to E$. 
 There is an o.s. $E_1$ and a completely isometric embedding $j : E \subset E_1$
 such that
 for all $i\in I$ there is an operator $\tilde u_i : L_i \to E_1$
 with  $\|{\tilde u_i}\|_{cb}= \|u_i\|_{cb} $  such that that
  $  {\tilde {u}_i\ }_{|S_i}= j u_i$
 for all $i\in I$. Moreover, whenever $u_i$ is injective, so is $\tilde u_i$ and we have
\begin{equation}\label{GG}  \| {{\tilde {u_i}}^{-1}\,}_{|{\tilde u_i}(L_i)} \|_{cb}= \| {u_i^{-1}}_{|u_i(S_i)} \|_{cb}.\end{equation}
In addition, $E_1/E$ is completely isometrically isomorphic to $(\oplus\sum{L_i/S_i})_1$.
 \end{lem}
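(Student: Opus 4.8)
The plan is to mimic the single-index construction of Lemma~\ref{l1}, replacing the role of $L$ by the $\ell_1$-direct sum $(\oplus\sum_{i\in I} L_i)_1$ and the role of $S\subset L$ by the subspace $(\oplus\sum_{i\in I} S_i)_1$. Concretely, set $\mathcal L=(\oplus\sum_{i\in I} L_i)_1$, $\mathcal S=(\oplus\sum_{i\in I} S_i)_1\subset\mathcal L$, and let $u:\mathcal S\to (\oplus\sum_{i\in I} E)_1$ be the ``diagonal'' map $u=(\oplus\sum_i u_i)$ acting coordinatewise from $S_i$ to the $i$-th copy of $E$ in $(\oplus\sum_{i\in I} E)_1$. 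One checks $\|u\|_{cb}=\sup_i\|u_i\|_{cb}$ (using \eqref{pde9}, \eqref{de9} and the definition of $\oplus_1$). Then $E_1$ is built by a single push-out: one takes a ``folding'' map, or more directly uses the universal-property description of $E_1$ in the Remark following Lemma~\ref{l1}, to collapse the diagonal copies of $E$ into one. Alternatively, and perhaps most cleanly, one just applies the construction of Lemma~\ref{l1} itself $|I|$ times in parallel by setting $E_1=[\,\mathcal L\oplus_1 E\,]/G$ where $G=\{\,((s_i)_{i\in I},-\sum_i u_is_i)\mid (s_i)\in\mathcal S\,\}$, $Q:\mathcal L\oplus_1 E\to E_1$ the quotient map, $j(e)=Q(0\oplus e)$, and $\tilde u_i(x)=Q(\iota_i(x)\oplus 0)$ for $x\in L_i$, where $\iota_i:L_i\hookrightarrow\mathcal L$ is the canonical inclusion.

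First I would record the basic facts needed: the identities $S_1[\mathcal L\oplus_1 E]=S_1[\mathcal L]\oplus_1 S_1[E]$, $S_1[\mathcal L/G]=S_1[\mathcal L]/S_1[G]$, and — crucially — that the $S_1[\cdot]$-functor commutes with $\ell_1$-direct sums, i.e. $S_1[(\oplus\sum_i L_i)_1]=(\oplus\sum_i S_1[L_i])_1$ isometrically. These are all in \cite{87.b.} (or \cite[p.~142]{P4}); the last is the $\ell_1$-analogue of the additivity of the projective tensor norm. As in the proof of Lemma~\ref{l1}, applying $S_1[\cdot]$ reduces everything to a purely Banach-space computation, so it suffices to verify the statement with ordinary norms in place of cb-norms, with all the $L_i,S_i,E$ replaced by $S_1[L_i],S_1[S_i],S_1[E]$.

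Next I would carry out that Banach-space computation. Injectivity of $j$ and the relation $\tilde u_i{}_{|S_i}=ju_i$ are formal from the definition of $G$. For $\|je\|$: by definition $\|Q(0\oplus e)\|=\inf\{\sum_i\|s_i\|+\|e-\sum_i u_is_i\|\mid (s_i)\in\mathcal S\}$; taking all $s_i=0$ gives $\le\|e\|$, and the triangle inequality $\sum_i\|s_i\|+\|e-\sum_i u_is_i\|\ge \sum_i\|s_i\|+\|e\|-\sum_i\|u_is_i\|\ge\|e\|$ (using $\|u_i\|\le1$, which one may assume after rescaling each $u_i$ — note the general statement only asserts $\|\tilde u_i\|_{cb}=\|u_i\|_{cb}$, handled by homogeneity) gives $\ge\|e\|$; so $j$ is isometric. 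For the lower bound on $\tilde u_i$ one estimates $\|\tilde u_i(x)\|=\inf\{\|x+s_i\|_{L_i}+\sum_{k\ne i}\|s_k\|+\|\sum_k u_ks_k\|\}$; the point is that the extra $\ell_1$-terms $\sum_{k\ne i}\|s_k\|$ and the $E$-coordinate only help, so the chain of inequalities \eqref{FF} from the proof of Lemma~\ref{l1} goes through verbatim with $c_i=\|u_i^{-1}{}_{|u_i(S_i)}\|$, yielding $\|\tilde u_i(x)\|\ge c_i^{-1}\|x\|$, hence $\le$ in \eqref{GG}; the reverse inequality is automatic since $u_i$ is a restriction of $\tilde u_i$. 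Finally the identification $E_1/E\cong(\oplus\sum_i L_i/S_i)_1$: the map $\tilde u:\mathcal L\to E_1$ (the diagonal assembly of the $\tilde u_i$, i.e. $\tilde u((x_i))=\sum_i\tilde u_i(x_i)=Q((x_i)\oplus 0)$) is a complete metric surjection onto $E_1/\ker$, it kills exactly $\mathcal S$ modulo $E$, and as in the Remark $E_1/E$ ($E_1/jE$) identifies completely isometrically with $\mathcal L/\mathcal S=(\oplus\sum_i L_i)_1/(\oplus\sum_i S_i)_1=(\oplus\sum_i L_i/S_i)_1$, the last equality being the standard fact that $\ell_1$-sums commute with quotients (apply \S on dual o.s.\ and quotients, or \cite{ER,P4}).

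The main obstacle I anticipate is purely bookkeeping rather than conceptual: checking that $S_1[\cdot]$ genuinely commutes with arbitrary (possibly infinite) $\ell_1$-direct sums isometrically, and that the three functorial identities compose correctly so that $S_1[E_1]$ really is $[\,S_1[\mathcal L]\oplus_1 S_1[E]\,]/S_1[G]$ with $S_1[G]$ matching the ``graph'' built from the $S_1[u_i]$. Once those identifications are in place, the cb-statement is immediate from the Banach-space case exactly as in Lemma~\ref{l1}, and one need not manipulate matrix norms directly at all. It is worth remarking that the universal property (the Remark after Lemma~\ref{l1}, extended to a cocone over the family) gives an alternative, diagram-chasing derivation of \eqref{GG} and of $E_1/E\cong(\oplus\sum_i L_i/S_i)_1$, which I would include as a one-line aside.
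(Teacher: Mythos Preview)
Your approach is essentially the same as the paper's: normalize so that $\|u_i\|_{cb}\le 1$, form the single map $u:(\oplus\sum_i S_i)_1\to E$ by $(s_i)\mapsto\sum_i u_is_i$, apply the push-out of Lemma~\ref{l1} to this $u$, and recover each $\tilde u_i$ as the restriction of $\tilde u$ to the $i$-th summand (rescaled back). The paper invokes Lemma~\ref{l1} as a black box while you unpack the quotient explicitly, but the lower-bound computation (your inequality $\sum_{k\ne i}\|s_k\|+\|\sum_k u_ks_k\|\ge\|u_is_i\|$ is exactly the paper's) and the passage to cb-norms via $S_1[\cdot]$ are identical.
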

 \begin{proof}
 Let $u'_i=\|  u_i\|^{-1}_{cb}u_i$.  Let $S=(\oplus\sum_{i\in I} {S_i})_1$, $L=(\oplus\sum_{i\in I}{L_i})_1$
 and let $u: S \to E$ be defined 
 for all $(s_i)  \in S$ by $u((s_i))=\sum  u'_i( s_i)$,
 so that $\|u\|_{cb}=1$. Let  $E_1$, $j:E \to E_1$  (completely isometric) and $\tilde u: L \to E_1$
 be as in Lemma \ref{l1}. 
 Let $v_i: L_i \to E_1$ be the restriction of 
 $\tilde u$ on the $i$-th factor. 
 We have $ \| \tilde u \|_{cb} =1$ and hence  $ \| v_i \|_{cb} \le 1$ for all $i\in I$.
 Let $\tilde u_i= \|  u_i\|_{cb} v_i : L_i \to E_1$ and hence $\| \tilde u_i  \|_{cb} \le \|  u_i\|_{cb}$.
 Note $  {\tilde u_i\ }_{|S_i}= j u_i$ so that $\| \tilde u_i  \|_{cb} = \|  u_i\|_{cb}$.
  To check \eqref{GG} we essentially
 repeat the argument in \eqref{FF}:
 for $x_i\in L_i$ and $s \in S  $  we have 
 $$\|x_i +s_i\|+\sum\nl_{j\not = i} \|s_j\| + \|u'_is_i + \sum\nl_{j\not = i} u'_j s_j \| \ge  \|x_i +s_i\| + \|u'_is_i \| $$ and hence $\|v_ix_i\|\ge  \|{{u'_i}^{-1}}_{|u'_i(S_i)}\|^{-1} \|x_i\|   =
  \|  u_i\|^{-1}_{cb} \|{{u_i}^{-1}}_{|u_i(S_i)}\|^{-1}  \|x_i\| $.
 This implies $ \|{{\tilde u_i\ }^{-1}}_{|\tilde u_i(S_i)}\| \le     \|{{u_i}^{-1}}_{|u_i(S_i)}\|$, and since
 $\tilde u_i$ extends $u_i$ we obtain \eqref{GG} for the norms. Using the spaces
  $S_1[E] $
 as before, we obtain the same with c.b. norms.
 \end{proof}
 
 \begin{rem} One can also deduce Lemma \ref{l1'}
 from Lemma \ref{l1} by iterated applications of
 the latter (as in Remark \ref{iter}) and using transfinite induction after well ordering the set $I$.
  \end{rem}

By homogeneity, Lemma \ref{l1} implies the following:

 \begin{lem}\label{l12/3}  Let $S\subset L$, $E$   be as in     Lemma \ref{l1}.
 Let $v: S\to E$ be an injective c.b. map 
 with $\|  {v^{-1}}_{|  v(S)} \|_{cb} <\infty$ and let $u=v\|v\|^{-1}_{cb}$.
 Let $E_1$ and $j: E \to E_1$ 
  (completely isometric)
 be  associated to $u$ as in Lemma \ref{l1}.
 There is $\tilde v: L \to E_1$ extending $v$ such that
 \begin{equation}\label{12/3}
 \|\tilde v \|_{cb} \|{\tilde v^{-1}}_{|\tilde v(L)} \|_{cb} =  
 \|  v \|_{cb} \|  {v^{-1}}_{|  v(S)} \|_{cb} .\end{equation}
 \end{lem}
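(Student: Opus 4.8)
The plan is to reduce Lemma~\ref{l12/3} to Lemma~\ref{l1} by rescaling, exactly as the phrase ``by homogeneity'' suggests. Write $c=\|v\|_{cb}$ and $u=c^{-1}v$, so that $\|u\|_{cb}=1$ and $u$ is injective with $\|u^{-1}_{|u(S)}\|_{cb}=c\,\|v^{-1}_{|v(S)}\|_{cb}$ (scaling $v$ by a positive constant scales the inverse by the reciprocal). Apply Lemma~\ref{l1} to this normalized $u$: we obtain $E_1$, a completely isometric embedding $j:E\to E_1$, and an extension $\tilde u:L\to E_1$ with $\tilde u_{|S}=ju$, $\|\tilde u\|_{cb}\le 1$, and, by \eqref{e1},
\begin{equation*}
\|\tilde u^{-1}_{|\tilde u(L)}\|_{cb}=\|u^{-1}_{|u(S)}\|_{cb}.
\end{equation*}

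Now set $\tilde v = c\,\tilde u : L\to E_1$. Then $\tilde v_{|S}=c\,\tilde u_{|S}=c\,ju=j(cu)=jv$, so (identifying $E$ with $j(E)$ inside $E_1$) $\tilde v$ extends $v$. Since $\tilde u$ has $\|\tilde u\|_{cb}\le 1$ and extends the isometry-up-to-scaling $u$, in fact $\|\tilde u\|_{cb}=1$ (the restriction $u$ already has cb-norm $1$), hence $\|\tilde v\|_{cb}=c=\|v\|_{cb}$. Likewise $\tilde v^{-1}_{|\tilde v(L)}=c^{-1}\tilde u^{-1}_{|\tilde u(L)}$, so
\begin{equation*}
\|\tilde v^{-1}_{|\tilde v(L)}\|_{cb}=c^{-1}\|\tilde u^{-1}_{|\tilde u(L)}\|_{cb}=c^{-1}\|u^{-1}_{|u(S)}\|_{cb}=\|v^{-1}_{|v(S)}\|_{cb}.
\end{equation*}
Multiplying the last two displayed equalities gives
\begin{equation*}
\|\tilde v\|_{cb}\,\|\tilde v^{-1}_{|\tilde v(L)}\|_{cb}=\|v\|_{cb}\,\|v^{-1}_{|v(S)}\|_{cb},
\end{equation*}
which is \eqref{12/3}.

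There is really no hard part here: the only things to be careful about are (i) that $E_1$ and $j$ in the statement are by hypothesis those attached to $u=v\|v\|^{-1}_{cb}$, so no choice is being made and the rescaling is consistent; (ii) that Lemma~\ref{l1}'s equality \eqref{e1} requires $\|u\|_{cb}\le 1$ and $u$ injective, both of which hold after normalization since $\|v^{-1}_{|v(S)}\|_{cb}<\infty$ forces injectivity of $v$ and hence of $u$; and (iii) the elementary fact that for a scalar $\lambda>0$ one has $\|\lambda v\|_{cb}=\lambda\|v\|_{cb}$ and $\|(\lambda v)^{-1}_{|(\lambda v)(S)}\|_{cb}=\lambda^{-1}\|v^{-1}_{|v(S)}\|_{cb}$, so that the product $\|v\|_{cb}\|v^{-1}_{|v(S)}\|_{cb}$ is scale-invariant. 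If one wants $E_1$ to depend only on $S\subset L$ and on $v$ up to the scaling already fixed in the statement, one simply records that all the maps $\tilde v$ obtained for different positive rescalings of $v$ land in the same $E_1$, differing only by a scalar.
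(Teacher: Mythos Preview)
Your argument is correct and follows exactly the paper's approach: normalize $v$ to $u=\|v\|_{cb}^{-1}v$, apply Lemma~\ref{l1} (using \eqref{e1}), and rescale the resulting $\tilde u$ back to $\tilde v=\|v\|_{cb}\,\tilde u$. The paper's own proof is the one-line version of what you wrote, so there is nothing to add.
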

 \begin{proof}
 Indeed, we have $\|  \tilde u\|_{cb} =\|  u\|_{cb}=1$, so that if
 we  set  $\tilde v= \tilde u \|v\|_{cb}$ then $\tilde v$ extends $v$ and  
 \eqref{12/3} follows from  \eqref{e1}.
\end{proof}

 \section{The Gurarii property for operator spaces}\label{gpos}

 Let $OS_{fd}$ denote the (complete) metric  space of all f.d. operator spaces
 equipped with the cb-analogue of the Banach-Mazur distance $d_{cb}$
 defined by \eqref{dcb}.
  Two elements $E,F\in OS_{fd}$ are considered the same  if they are completely isometrically isomorphic. This holds if and only if $d_{cb}(E,F)=1$ (or equivalently $\log d_{cb}(E,F)=0$).
  We will often identify an $E\in OS_{fd}$ with a ``concrete" representative $E\subset B(H)$.
  Although slightly abusive these conventions should not lead to any confusion.
  
 Let $\cl E \subset OS_{fd}$ be a set (or ``a class") of finite dimensional o.s.  We will always assume implicitly that   $\cl E $ is non-void and 
 $\cl E\not=\{  \{0\}  \}$. 
 
 A class $\cl E$ will be called separable if it is so with
   respect to the metric $d_{cb}$, i.e. if there is a countable $\cl E'\subset \cl E$
 such that for any $E\in \cl E$ and any $\vp>0$ there is $E'\in \cl E'$ with
 $d_{cb}(E,E')<1+\vp$.
 
For any f.d.o.s. $E\subset B(H)$ we denote
  \begin{equation}\label{e33} d_{cb}(E,\cl E)=\inf \{ d_{cb}(E,F)\mid F \in \cl E\}.\end{equation}
 
   Let $X$ be an o.s. We denote
\begin{equation}\label{e34}d_{SX}(E)=\inf\{ d_{cb}(E,F)\mid F \subset X\}.\end{equation}
 
   \n{\bf Notation:} Let $u :S \to E$ be a map that defines a complete isomorphism 
 into its image. We define the ``distortion" $D(u)$ of $u$  by setting
   \begin{equation}\label{dD}
   D(u)=
   \|  u \|_{cb} \|{ u^{-1}}_{|  u(S)} \|_{cb}.\end{equation}
   Obviously 
   if $v: E \to F$ is a complete isomorphism  from $E$ to $v(E)$
   we have
     \begin{equation}\label{dD'}
   D(vu)\le D(v) D(u) \text{  and also  } D(u)= D( { u^{-1}}_{|  u(S)} ).\end{equation}
Note  $D(t u)=D(u)$ for all $t>0$. Let us denote  
    \begin{equation}\label{21.12}
    D'(u)=
     \max\{ \|  u \|_{cb}  ,  \|{  u^{-1}}_{|  u(S)} \|_{cb}  \},\end{equation}
     so that
\begin{equation}\label{dD''}
  D'(vu)\le D'(v) D'(u) \text{  and also  } D'(u)= D'( { u^{-1}}_{|  u(S)} ).\end{equation}
     Note $D( u) \le    D'(u)^2$. Note also that $D'(u)=D(u)= \|{ u^{-1}}_{|  u(S)} \|_{cb}$ when $\|u\|_{cb}=1$.
 
  \begin{dfn} A map $u$ as above such that 
   $\max\{ \|  u \|_{cb} , \|{ u^{-1}}_{|  u(S)} \|_{cb} \}  \le 1+\vp$
    (i.e. $D'(u) \le 1+\vp$) will be called an $\vp$-embedding.\\
    When $u$ is surjective we say that $u$ is an  $\vp$-isomorphism.
   \end{dfn}
   
 \begin{dfn}\label{d5/2}  Let $\cl E$ be a class of f.d. operator spaces.
 We will say that $\cl E$ is a Gurarii class or simply is Gurarii
 if it satisfies the following property.
 \\
 Consider an injective operator $u: S \to E$
 where $S\subset L$ and  $L,E \in \cl E$.
 Then for any $\d>0$ there is a space $E_1\in \cl E$,
  a completely isometric embedding $\phi: E\to  E_1$ and a map 
  $\tilde u: L \to E_1$ extending $u$ (i.e.   ${\tilde u}_{|S}= \phi u$) such that
 \begin{equation}\label{4/2}
 D(\tilde u )
 \le (1+\d) D(u)
.\end{equation}
 $$\xymatrix{&L\ar[r]^{ \tilde u }& E_1 \\
& S\ \ \ar@{^{(}->}[u] 
 \ar[r]^{u} & E\ar@{^{(}->}[u]_{\phi} }$$
   We will say that $\cl E$ is tightly Gurarii if this holds with $\d=0$.\\
   We will say that $\cl E$ is loosely Gurarii if for any $\vp>0$ this holds for some 
 $\vp$-embedding $\phi: E\to E_1$.
 \end{dfn}
Obviously: tightly Gurarii $\Rightarrow$   Gurarii $\Rightarrow$ loosely Gurarii.
 
  \begin{rem} Lemma \ref{l1} shows that the class of all f.d.  
  operator spaces  is tightly Gurarii, just like the class of all f.d. Banach spaces in the Banach space setting.
 \end{rem}
   \begin{rem}[On iteration]\label{iter}
   Let $\cl E$ be a Gurarii class. Fix $E\in \cl E$.
   The Gurarii property behaves nicely with respect to iteration.
   Indeed, suppose given 
   a finite set of   injective maps $u_i: S_i \to E$ with $S_i\subset L_i$ ($1\le i\le n$).
   We claim that there is $E_n\in \cl E$ and a complete isometry $\Phi_n: E \to E_n$
   for which there are maps ${\tilde u}_i$   such that ${\tilde {u_i}}_{|S_i}$=$\Phi_n u_i$
   for any $1\le i\le n$ and \eqref{4/2} holds for any $u\in \{ u_1,\cdots,u_n\}$.\\
   Indeed, let $E_1$ and $\phi_1: E \to E_1$
     be as in Definition \ref{d5/2} applied to $u=u_1$, and let $\tilde u :L_1 \to E_1$ be the corresponding map. We may reapply the property in Definition \ref{d5/2}
     this time with $E$ replaced by $E_1$ and 
     $u$ replaced by $\phi_1 u_2: S_2 \to E_1$.
   Thus we find $E_2$, a complete isometry  $\phi_2: E_1\to E_2$ 
   and $\tilde{u_2} : L_2 \to E_2$   such that ${\tilde{u_2}}_{|S_2}   =\phi_2\phi_1 u_2$.
   We set $\tilde u_1= \phi_2\tilde u : L_1 \to E_2$. Then
   the map $\Phi_2=\phi_2\phi_1$ satisfies the claim for $n=2$.
   Moreover, \eqref{4/2} holds for any $u\in \{ u_1, u_2\}$.
   Continuing in this way, we obtain complete isometries $\phi_i: E_{i-1}\to E_i$
   so that $\Phi=\phi_n\cdots \phi_1$ satisfies the announced property.
 \end{rem}
   
     \begin{rem}\label{r11/3} We have trivially $\|  u \|_{cb} \le  \|\tilde u \|_{cb}$
  and $\|  {u^{-1}}_{|  u(E)} \|_{cb} \le \|{\tilde u^{-1}}_{|\tilde u(F)} \|_{cb}$
      (since ${\tilde u}_{|S}=u$). Therefore
  \eqref{4/2} implies
 \begin{equation}\label{22.12} 
      \|  \tilde u \|_{cb}\le (1+\d)\|  u \|_{cb} \text{ and }
     \|  {u^{-1}}_{|  u(S)} \|_{cb}\le  (1+\d)\|{\tilde u^{-1}}_{|\tilde u(L)} \|_{cb} .\end{equation}
Conversely, \eqref{22.12} implies \eqref{4/2} with $(1+\d)^2$ in place of $(1+\d)$.
  A fortiori  
        \begin{equation}\label{21.12'} 
     D(\tilde u) \le   (1+\d)D(u) \Rightarrow D'(\tilde u) \le   (1+\d)D'(u).
      \end{equation} 
   \end{rem}
   
  The goal of the next lemma is to show that
  we may apply the Gurarii extension property
  even when the inclusion  
  $S\subset L$ is replaced by  an $\vp$-isometric map $T: Y\to L$.
   
      \begin{lem}\label{18.12} 
     Let $L,E,E_1$ be f.d. operator spaces. Let   $\d>0$.
     Let $\phi: E \to E_1$ be an $\vp$-embedding for some $\vp>0$.
     Assume that for any f.d. subspace $S\subset L$
     and for any injective $u: S \to E$ there is $\tilde u: L \to E_1$
     injective extending $u$ in the sense that $\tilde u_{|S}= \phi u$ and 
     such that $ D'(\tilde u )
 \le (1+\d) D'(u)$. Then for any f.d. Y and 
 any injective map $T: Y \to L $  the following generalization holds:\\
 For any injective $v: Y \to E$ there is $\tilde u: L \to E_1$
     injective extending $v$ in the sense that $\tilde u T= \phi v$ and 
     such that $ D'(\tilde u )
 \le (1+\d) D'(v) D'(T)$. 
  
         \end{lem}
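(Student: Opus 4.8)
The plan is to reduce the statement to the hypothesis by replacing the abstract injection $T\colon Y\to L$ with an honest subspace inclusion, and then to track the distortion through the composition and inversion rules \eqref{dD''} for $D'$. In other words, this is a change-of-variables argument in the same spirit as Lemma \ref{l12/3}.

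First I would put $S=T(Y)\subset L$, which is finite dimensional since $Y$ is. Because $Y$ and $S$ are finite dimensional, the corestriction $T_0\colon Y\to S$ of $T$ is a complete isomorphism, and $D'(T)=D'(T_0)$, since corestriction to the subspace $S\subset L$ changes neither the cb-norm nor the inverse; in particular $D'(T)<\infty$. Next I would transport $v$ through $T$: define the injective map $u\colon S\to E$ by $u=v\,T_0^{-1}$, so that $uT=v$ on $Y$. Then by \eqref{dD''},
\begin{equation*}
D'(u)=D'(v\,T_0^{-1})\le D'(v)\,D'(T_0^{-1})=D'(v)\,D'(T_0)=D'(v)\,D'(T).
\end{equation*}

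Now I would apply the hypothesis of the lemma to the inclusion $S\subset L$ and to the injective map $u\colon S\to E$. This produces an injective $\tilde u\colon L\to E_1$ with $\tilde u_{|S}=\phi u$ and $D'(\tilde u)\le(1+\d)D'(u)$. Two checks then remain. For the intertwining: since $T$ maps $Y$ into $S$, for $y\in Y$ we get $\tilde u T y=\phi u(Ty)=\phi v(T_0^{-1}T_0\,y)=\phi v\,y$, i.e. $\tilde u T=\phi v$. For the distortion: combining with the previous display, $D'(\tilde u)\le(1+\d)D'(u)\le(1+\d)D'(v)D'(T)$, which is the asserted bound.

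There is essentially no hard step here. The one point deserving care is the bookkeeping around $T_0^{-1}$: one must choose $u$ so that $uT=v$ (rather than $Tu$ or $uT^{-1}$), and invoke the identity $D'(T_0)=D'(T_0^{-1})$ from \eqref{dD''} so that the factor $D'(T)$, and not $D'(T^{-1})$, appears in the final estimate. One should also keep in mind that the hypothesis delivers a genuinely injective $\tilde u$ with $\tilde u_{|S}$ exactly equal to $\phi u$, which is precisely what both conclusions require.
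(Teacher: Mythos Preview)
Your proof is correct and follows exactly the same approach as the paper's: set $S=T(Y)$, define $u=v\,T_0^{-1}\colon S\to E$, apply the hypothesis, and bound $D'(u)$ via \eqref{dD''}. The paper's proof is essentially a two-line version of what you wrote; your expansion of the bookkeeping around $T_0^{-1}$ and the verification that $\tilde u T=\phi v$ simply makes explicit what the paper leaves implicit.
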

   
    \begin{proof} Consider $S=T(Y)\subset L$ and
    let $u= v {T^{-1}}_{|S}: S \to E$.
    Then by \eqref{dD''} we have $ D'(\tilde u ) \le (1+\d) D'(v {T^{-1}}_{|S})
 \le (1+\d) D'(v) D'(T)$.
 
 \centerline{$\xymatrix{&&L\ar[r]^{ \tilde u }& E_1 \\
&T(Y)\ar@/_1pc/[rr]_{u}\ar@ /^2pc/[ur]\ar[r]^{{T^{-1}}_{|S}} & Y \ar[u]^{T} 
 \ar[r]^{v} & E\ar@{^{(}->}[u]_{\phi} }$}
    \end{proof}
     
 The next definition is a simple modification of a notion considered by Oikhberg in \cite{Oi} to extend the  Gurarii space
 from the Banach space framework to the operator space one.
 
 \begin{dfn}\label{d1} We will say that an operator space $X$ has the $\cl E$-Gurarii property
 if 
 \begin{itemize}\item[{}] for any $\vp>0$ and for any pair of spaces $S\subset L$ with $L\in \cl E$
 the following holds:\\
 for any injective linear map $u: S \to X$
 there is an injective map $\tilde u: L \to X$ extending $u$  such that
 \begin{equation}\label{e32}  D(\tilde u) \le (1+\vp) D(u) .\end{equation}
 \end{itemize} 
 Consequently, if $S\subset L$ with $L\in \cl E$, for any  $\vp,\d>0$ any $\vp$-embedding $u: S \to X$ extends to
 an $(\vp+\d)$-embedding $\tilde u: L \to X$.
 \end{dfn}

  \begin{dfn}\label{rd1} We will say that an o.s. $X$  is   $\cl E$-injective if  
   for any $\vp>0$, any $E\in \cl E$ and  any $S\subset E$, any map $u: S \to X$
   admits an extension $\tilde u: E \to X$ with
   $\|\tilde u\|_{cb}\le (1+\vp)\|u\|_{cb}$.
    \end{dfn}
    
  \begin{rem} 
  For instance when $\cl E$ is the class (denoted below by $\cl E_{S\C}$)
  of all the f.d. subspaces of $C^*(\F_\infty)$, then any WEP o.s. $X$
  is  $\cl E$-injective (see \cite[Remark 21.5]{P6}).
    \end{rem}
 
  \begin{lem}\label{dl5}  
  For an operator space $X$, the $\cl E$-Gurarii property
implies $\cl E$-injectivity.
  \end{lem}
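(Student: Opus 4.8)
The plan is to deduce $\cl E$-injectivity from the $\cl E$-Gurarii property by reducing the general extension problem to the special case where the map being extended is injective. So suppose $E\in\cl E$, $S\subset E$, and $u:S\to X$ is an arbitrary (possibly non-injective) map; fix $\vp>0$. First I would normalize, assuming $\|u\|_{cb}\le 1$, and also assume $u\neq 0$ (the zero map extends trivially). The key trick is to make $u$ injective by enlarging the target: I will replace $X$ by an auxiliary direct sum and then compose with a projection back onto $X$.

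More concretely, set $W = X \oplus_\infty S$ and consider the map $u' : S \to W$ defined by $u'(s) = (u(s), \eta s)$ for a small $\eta>0$; this is injective since its second coordinate is. Its cb-norm is at most $(1+\eta)$, and the cb-norm of its inverse on $u'(S)$ is at most $\eta^{-1}$, so $D(u') = D'(u')^2$ is finite. The issue is that $W$ need not have the $\cl E$-Gurarii property, so this particular device as stated does not directly apply — instead I would use the pushout of Lemma \ref{l1}. Given $S\subset E$ with $E\in\cl E$ and $u:S\to X$ with $\|u\|_{cb}\le 1$, form the pushout space $E_1 = [E\oplus_1 X]/G_u$ with the completely isometric inclusion $j:X\to E_1$ and the extension $\tilde u : E \to X$... but this lands in $E_1$, not $X$. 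So the real route is the reverse: I would observe that the $\cl E$-Gurarii property lets us extend an \emph{injective} $\vp$-embedding out of a subspace of an $\cl E$-space, and the content of the lemma is that we can drop "injective" and drop "$\vp$-embedding" (i.e. allow arbitrary cb-norm of the inverse, including $\infty$).

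Here is the cleaner approach I would actually write. Let $E\in\cl E$, $S\subset E$, $u:S\to X$ with $\|u\|_{cb}\le 1$. Pick a basis and consider, for small $\eta>0$, an injective map $w : S \to E'$ into some $E'\in\cl E$... no — rather, I will use that $X$ with the $\cl E$-Gurarii property can absorb the pushout. Apply Lemma \ref{l1} with $L = E$, the given $S\subset E$, and the map $u : S\to X$ (so here the roles are $E\rightsquigarrow X$, $E_1\rightsquigarrow E_1$): we get $E_1$, a complete isometry $j:X\to E_1$ and $\tilde u : E\to E_1$ extending $u$ with $\|\tilde u\|_{cb}\le 1$. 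Now $E_1/jX \simeq E/S$ by the remark following Lemma \ref{l1}, which is in $\cl E$ if $\cl E$ is closed under quotients — but the lemma as stated does not assume that. So instead I treat $jX\oplus$ a complement. The honest plan: define $v : S \to X$ to already be handled when it is injective with finite distortion via the $\cl E$-Gurarii property (taking $L=E$ there, noting $D(v)$ need only be finite); then for the general $u$ with $\|u\|_{cb}\le1$, set $v_\eta = $ the map $S\to X$ which is $u$ (injectivity is not actually needed if we first pass through a faithful auxiliary embedding of $S$ into $X$). Concretely: since $X\neq\{0\}$ and $\cl E\neq\{\{0\}\}$, embed a complement of...

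The main obstacle, and the reason I would spend most of the write-up here, is handling the lack of injectivity of $u$. The trick that works: given arbitrary $u:S\to X$ with $\|u\|_{cb}\le 1$, use the pushout to build an \emph{injective} map into an auxiliary $\cl E$-space that $X$ can then absorb. Precisely, let $Z = S\oplus_1 (\ker u)$-type construction — or, most efficiently, apply Lemma \ref{l12/3}/Lemma \ref{l1'} to split off the kernel. I expect the final argument to run: factor $u$ as $S \twoheadrightarrow S/\ker u \hookrightarrow$ (completely isometrically into some space) and handle the quotient map by the lifting inherent in the pushout, then apply the $\cl E$-Gurarii property to the injective part. Since $S/\ker u$ need not lie in $\cl E$, the cleanest fix is: choose $\eta>0$ small, let $u_\eta : S\to X\oplus_1 \cl K$ (for a suitable f.d. $\cl K$) be $u\oplus \eta\,\mathrm{id}$ which is injective with $D'(u_\eta)\le 1+\eta$ over $\eta^{-1}$; extend $u_\eta$ to $\tilde u_\eta : E\to X\oplus_1\cl K$ using the $\cl E$-Gurarii property of $X\oplus_1\cl K$ — but that requires $X\oplus_1\cl K$ to have the property, which we don't know. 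Therefore the genuinely correct route, which I will write, is to use Lemma \ref{l1} directly to produce an extension $\tilde u$ of $u$ with $\|\tilde u\|_{cb}\le 1$ into the pushout $E_1$, then use the $\cl E$-Gurarii property of $X$ to find a complete $(1+\vp)$-embedding of $E_1$ back into $X$ fixing $jX$: indeed $jX\subset E_1$ is completely isometric and $E_1 = j\tilde u(E) + jX$, so $E_1$ is itself a finite-dimensional extension of $jX$; writing $E_1 = [E\oplus_1 X]/G_u$ and using that $E\in\cl E$ together with the pushout's universal property, one applies the $\cl E$-Gurarii extension to the embedding $j^{-1}|_{jX}: jX\to X$ along $jX\subset E_1$, getting $\Psi : E_1\to X$ with $\|\Psi\|_{cb}\le 1+\vp$ and $\Psi j = \mathrm{id}_X$; then $\tilde{\tilde u} = \Psi\tilde u : E\to X$ extends $u$ with $\|\tilde{\tilde u}\|_{cb}\le 1+\vp$, which is exactly $\cl E$-injectivity. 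The subtlety to verify carefully is that $E_1$, as a one-sided finite-dimensional extension of $X$ built from $E\in\cl E$ via $\oplus_1$ and a quotient, is "reachable" by the $\cl E$-Gurarii extension property — this is where I'd need to check that the relevant $L$ in Definition \ref{d1} can be taken to be (a space completely isometric to) $E_1$, i.e. that $E_1\in\cl E$ is not actually required, only that $E_1$ arises as such a pushout; re-reading Definition \ref{d1} shows it demands $L\in\cl E$, so I would instead invoke Lemma \ref{l1} with the roles arranged so that the space playing "$L$" is $E$ itself (which is in $\cl E$) and the space playing the target/"$X$" is $X$, extending $j^{-1}$ — making this bookkeeping correct is the crux.
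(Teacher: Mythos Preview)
Your proposal never closes the gap you yourself identify at the end. The pushout route produces $E_1=[E\oplus_1 X]/G_u$ with $j:X\hookrightarrow E_1$ completely isometric and $\tilde u:E\to E_1$ extending $u$, and you then want a map $\Psi:E_1\to X$ with $\Psi j=\mathrm{id}_X$ and $\|\Psi\|_{cb}\le 1+\vp$. But Definition~\ref{d1} only lets you extend an injective map $S\to X$ along an inclusion $S\subset L$ with $L\in\cl E$; here $E_1$ is infinite-dimensional (it contains $jX$) and certainly not in $\cl E$, and the subspace $jX$ from which you want to extend is not finite-dimensional either. None of your earlier attempts (perturbing $u$ by $\eta\,\mathrm{id}$ into $X\oplus_\infty S$ or $X\oplus_1\cl K$, factoring through $S/\ker u$, etc.) avoids this: each one needs the $\cl E$-Gurarii property for some auxiliary space that is not $X$, or for some $L\notin\cl E$.

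The paper's argument bypasses all of this with a two-line observation you are missing. First, for \emph{injective} $u:S\to X$ the $\cl E$-Gurarii property already gives an extension $\tilde u:E\to X$ with $D(\tilde u)\le(1+\vp)D(u)$, and Remark~\ref{r11/3} then yields $\|\tilde u\|_{cb}\le(1+\vp)\|u\|_{cb}$ --- so $\cl E$-injectivity holds on the injective maps. Second, since $\dim S<\infty$ (and $X$ contains completely isomorphic copies of every $E\in\cl E$, so $\dim X\ge\dim S$), the injective maps are dense in $CB(S,X)$. The restriction map $CB(E,X)\to CB(S,X)$, $\tilde u\mapsto\tilde u_{|S}$, is a bounded linear map between Banach spaces whose image of the $(1+\vp)$-ball is dense in the unit ball; the standard open mapping argument (the geometric-series step in its proof) then shows the image of the $(1+\vp)(1+\d)$-ball contains the open unit ball, which is exactly $\cl E$-injectivity. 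No pushouts are needed.
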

    \begin{proof} 
    By Remark \ref{r11/3} 
    the extension property in Definition \ref{rd1} is satisfied whenever
    the map $u: S \to X$ is injective.
    But since $\dim(S)<\infty$ 
    the set of injective maps is   dense
    in the space $CB(S,X)$.
    By the open mapping theorem (applied to the map $\tilde u\mapsto {\tilde u}_{|S}$)
    this  shows that $X$ is $\cl E$-injective.
      \end{proof}
   \begin{rem}\label{dr1}
Let $\cl {SE}$ be 
   the collection formed of all the 
   o.s. that are (completely isometrically isomorphic to) subspaces of spaces in  $\cl E$.
  It is obvious that  $\cl E$  (resp. loosely) Gurarii  implies  $\cl {SE}$ 
  (resp. loosely) Gurarii.
   Moreover   the $\cl E$-Gurarii property  implies 
   (and hence is equivalent to) the $\cl {SE}$-Gurarii property
   by an immediate restriction argument.  
       \end{rem}
       
 \begin{rem}\label{dr2} Let $\ovl{\cl E}$ be 
   the closure of $\cl E$ for the $d_{cb}$-distance
 (i.e. $E\in\ovl{\cl E}$ if and only if there is a sequence  $(E_n)$ in $\cl E$ such that $d_{cb}(E,E_n)\to 1$). Then the $\ovl{\cl E}$-Gurarii property and the $\cl E$-Gurarii property
  are obviously equivalent.
  By the preceding remark they are also equivalent to the
  $\ovl{\cl {SE}}$-Gurarii property.
  \end{rem}
  
   \begin{rem}\label{dr2'}
   For instance when $\cl E=\{\ell_\infty^n\}$ as in Remark \ref{bex},
   the class   $\ovl{\cl {SE}}$ is equal to the class of all f.d. minimal o.s.
   In the Banach space analogue this is the class of all f.d. spaces.\\
   When $\cl E=\{M_n\}$ the class   $\ovl{\cl {SE}}$ is equal to the class of all $1$-exact  o.s.\\
   When $\cl E=\{R_n\}$  (row matrices) or $\cl E=\{C_n\}$ (column matrices) or when $\cl E$ is the collection of all the f.d. subspaces of a 1-homogeneous 1-Hilbertian o.s. in the sense of \cite[p.172]{P4},
   then $\cl E=\ovl{\cl {SE}}$ (exercise).
  \end{rem}
  
    \begin{dfn} \label{rd9}
We will say that an o.s. $Y$ locally embeds in $\cl E$
if $F\in \ovl{\cl {SE}}$ for
any f.d. $F\subset Y$.
 \\ In other words   for
any f.d. $F\subset Y$  and
 any $\vp>0$
  and  there is $F_1\in \cl E$  and a subspace $\hat F\subset F_1 $ such that $d_{cb} (F,\hat F )\le 1+\vp$.
 \end{dfn}
 
  \begin{rem}\label{dr4}
 Note that, by perturbation, a separable o.s.
 $Y$  locally embeds in $\cl E$
 if and only if  $Y$ is the closure of an increasing union
 of f.d. subspaces $Y_n$ such that $d_{cb}(Y_n, \cl {SE}) \to 1$, and in that case
 actually $d_{cb}(Y_n, \cl {SE}) = 1$ for all $n$.
 In particular, when $Y=\ovl{\cup Y_n}$, if all $Y_n$'s locally embed in $\cl E$
 then so does $Y$.  \end{rem}

 \begin{rem}\label{pdr4}
  Let $Y$ be as in Remark \ref{dr4}. Then   $Y$ is 
  completely isometric to
  an inductive limit 
  of a system $(E_n,\phi_n)$ as in Remark \ref{ind} with $E_n\in \cl {SE}$ for all $n$.
  Indeed, let $(Y_n)$ be as in Remark \ref{dr4}. Let $\d_n >0$ be
  such that $\sum \d_n<\infty$.
  For each $n$ there is $E_n\in \cl {SE}$ and 
  a  $\d_n$-isomorphism   $w_n : Y_n \to E_n$.
   Let  $\phi_n : E_n \to E_{n+1}$ be defined by
  $\phi_n= w_{n+1}w_n^{-1}$, and let $X$ be the inductive limit in the sense
  of Remark \ref{ind}.
  Then the sequence $(w_n)$ defines a completely isometric 
   embedding $Y\to X$.\\
   Conversely, if $d_{\cl {SE}} (E_n)\to 1$ then any inductive limit $X$ as in Remark \ref{ind}
   locally embeds in $\cl E$.
    \end{rem}
 
 \begin{dfn}\label{d2} We will say that an operator space $X$  is an $\cl E$-Gurarii space if it is separable, has the $\cl E$-Gurarii property 
   and locally embeds in $\cl E$.
 \end{dfn}

 \begin{rem}[On the non-separable case]
 Let $E$ be an arbitrary o.s.
 We may apply 
 Lemma \ref{l1'} to the case when $I$ is equal to 
 the collection formed of  \emph{all}  the maps $u: S \to E$ 
 where  $L$ runs over the set of \emph{all} possible f.d. subspaces
 of $B(\ell_2)$ and $S\subset L$ runs over \emph{all} possible subspaces of $L$.
 Let us denote the latter collection by $I(E)$.
 This gives us $E_1\supset E$. Applying the same to $E_1$ and $I(E_1)$, we obtain
 $E_2\supset E_1$ and so on. Let $X=\ovl{\cup E_n}$. It is easy to see that
 $X$ has the $\cl E$-Gurarii property with respect to the class $\cl E$ of \emph{all} f.d.o.s.
A priori this looks like a good non-commutative analogue of the classical Gurarii space.
 Note  however that such a space cannot be separable
  since  $OS_{fd}$ is not separable by \cite{JP}.\\
  Although many arguments below make sense
 in the non-separable case (like the preceding one),
 we restrict to the separable case, which seems to be the main case of interest for us.
  \end{rem}

    \begin{rem} \label{dr3}
   Let $X$  be an $\cl E$-Gurarii space and assume that $Y$ locally embeds in $\cl E$.
  Let $E\subset F\subset Y$ and let $u: E \to X$ be an injective map.
  We observe that for any $\d>0$ there is  
  an injective map $\tilde u: F \to X$ extending $u$  
  such that 
  $D'(\tilde u) \le (1+\d)D'(  u)$  holds.
  Indeed, since   
  $F\in \ovl{\cl {SE}}$ this follows from Remarks \ref{dr1} and \ref{dr2}.
    \end{rem}

   It is rather easy to see that
   any $\cl E$-Gurarii space $X$ is universal-\emph{but only up to $\vp$}-for the class of spaces that locally embed in $\cl E$, meaning that
     if a separable o.s. $Y$   locally embeds in $\cl E$ then
   $Y$ embeds (globally) completely $(1+\vp)$-isometrically into $X$ for any $\vp>0$.
 This is easy to deduce from  Remark \ref{dr3}.
   In Theorem \ref{t4/2p} below we show that this even holds for $\vp=0$ when $\cl E$ is a league. For this we  will use
the   two  lemmas that follow.

  \begin{lem}\label{l11/3}  Let $\cl E$ be a  loosely Gurarii  
   class. Let $E,L_0,\cdots,L_n  \in \cl E$ ($n\ge 0$). For any $\d>0$
   and $\d'>0$  there is $E_1\in \cl E$ and a   $\d'$-embedding $\phi: E\to         E_1$ such that
 for any $L\in \{ L_0,\cdots,L_n\}$, any  $S\subset L$ and any $u: S \to E$
 there is $\tilde u: L \to E_1$ 
 such that ${\tilde u}_{|S}= \varphi u$ and 
  $D(\tilde u) \le (1+\d) D(u)$.\\
  If $\cl E$ is Gurarii we can achieve this with a completely  isometric $\phi: E\to E_1$
  (i.e. with $\d'=0$).
  \end{lem}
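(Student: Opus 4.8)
The plan is to combine a finite $\eta$-net argument over the (suitably compactified) space of ``extension problems'' with the iterated Gurarii property of Remark~\ref{iter}, repairing the resulting approximate solutions by the perturbation trick of Remark~\ref{pertu}. It suffices to treat a single $L=L_i$: granting the statement for each $L_i$ separately, with auxiliary spaces $E_1^{(i)}$ and embeddings $\phi^{(i)}$, one more run of the iteration of Remark~\ref{iter} (or an application of Lemma~\ref{l1'}) amalgamates them into one $E_1$ with one $\phi$. So fix $L$, and fix a threshold $C_0$ to be chosen at the end depending only on $\delta$.

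First I would discretize the data. Since the Grassmannian of $k$-dimensional subspaces of $L$ is compact for each $k\le\dim L$, fix a fine finite net $S_1,\dots,S_r$ of subspaces; given an arbitrary $S$ one picks $S_m$ of the same dimension close to $S$ together with a perturbation $\rho\colon L\to L$ of the identity with $\rho(S)=S_m$ and $D(\rho)$ as close to $1$ as desired (Remark~\ref{pertu}), which also makes Lemma~\ref{18.12} available. For each $S_m$ the set of injective $u\colon S_m\to E$ with $\|u\|_{cb}=1$ and $D(u)\le C_0$ is a \emph{compact} subset of $CB(S_m,E)$, and I take a finite $\eta$-net $u_{m,1},\dots,u_{m,p_m}$ of it, with $\eta$ small relative to $C_0$ and $\delta$; this is legitimate because $C_0$ is fixed before $\eta$. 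For the remaining maps, those $u\colon S_m\to E$ with $D(u)>C_0$, the push-out Lemma~\ref{l1} applied to the normalization of $u$ yields a \emph{near-contractive} extension $\tilde u_0\colon L\to\widehat E_u:=[L\oplus_1 E]/G_u$ with $\|\tilde u_0\|_{cb}\le1$, $D(\tilde u_0)=D(u)$, a completely isometric $j_u\colon E\hookrightarrow\widehat E_u$, and $\tilde u_0|_{S_m}=j_uu$; since $\dim\widehat E_u=\dim L+\dim E-\dim S_m$ is constant and $u\mapsto G_u$ is continuous, the pointed spaces $(\widehat E_u,j_u)$ range over a compact family, so finitely many prototypes $(\widehat E_{u^{(1)}},j_{u^{(1)}}),\dots$ suffice up to compatible near-isometries (the map $\widehat E_u\to\widehat E_{u'}$ being induced by $(x,e)\mapsto(x,e-(u'-u)Px)$ for a fixed projection $P\colon L\to S_m$, which fixes the copy of $E$).

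Next I would feed this finite data into the iteration of Remark~\ref{iter}: the maps $u_{m,k}$, together with, for each prototype, the completely isometric identification of its internal copy of $E$ with $E$. When $\cl E$ is Gurarii the intermediate embeddings are completely isometric, so the iteration applies verbatim and produces $E_1\in\cl E$, a completely isometric $\phi\colon E\to E_1$, extensions $\tilde u_{m,k}\colon L\to E_1$ of $\phi u_{m,k}$ with $D(\tilde u_{m,k})\le(1+\delta/4)D(u_{m,k})$, and near-isometric, marking-preserving copies $\iota_l\colon\widehat E_{u^{(l)}}\to E_1$ with $\iota_l j_{u^{(l)}}=\phi$; when $\cl E$ is only loosely Gurarii one runs the same iteration with $\vp_k$-embeddings, $\sum\vp_k$ chosen small enough that the composite $\phi$ is a $\delta'$-embedding and each distortion estimate degrades by at most $1+\delta/4$, and one absorbs the $\cl E$-replacement $E^{(u^{(l)})}\in\cl E$ supplied by the loosely Gurarii property applied to $u^{(l)}$ in place of $\widehat E_{u^{(l)}}$ (using the universal property recorded after Lemma~\ref{l1} to see that it dominates the amalgam with controlled loss). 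Finally, for an arbitrary $u\colon S\to E$, normalize $\|u\|_{cb}=1$ and set $u':=u\rho^{-1}|_{S_m}$: if $D(u')\le C_0$, pick $u_{m,k}$ with $\|u'-u_{m,k}\|_{cb}\le\eta$ so that (by the smallness of $\eta$ relative to $C_0$) $D(u_{m,k})\le(1+\delta/4)D(u')$, note that $\tilde u_{m,k}\circ\rho$ extends a map on $S$ that is $\eta$-close to $\phi u$, and correct it by a perturbation $w$ of the identity on $E_1$ (Remark~\ref{pertu} on the images of a basis of $S$) to get $\tilde u:=w(\tilde u_{m,k}\rho)$ with $\tilde u|_S=\phi u$ and $D(\tilde u)\le(1+\delta)D(u)$; if $D(u')>C_0$, transport the push-out extension $\tilde u_0$ of $u'$ through a compatible near-isometry to the nearest prototype and then via $\iota_l$ into $E_1$, obtaining $\tilde u$ with $\tilde u|_S=\phi u$ and $D(\tilde u)\le(1+o(1))D(\tilde u_0)=(1+o(1))D(u')\le(1+\delta)D(u)$ after shrinking the internal parameters.

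The genuine obstacle is precisely the large-distortion regime: one must exhibit a \emph{single finite dimensional} $E_1$ that near-isometrically extends \emph{every} $u\colon S\to E$ with $S\subset L$, including maps of arbitrarily large distortion, even though such $u$ are not approximable by maps of bounded distortion and no crude extension of them can be almost completely contractive. The resolution rests on the two points used above — that the amalgams $[L\oplus_1 E]/G_u$ vary continuously within a family of \emph{fixed} dimension, so that a \emph{finite} set of prototypes suffices, and that in the merely loosely Gurarii case these amalgams can be traded for spaces of $\cl E$ with only a controlled distortion loss — and the main technical burden is to keep the bookkeeping of $C_0$, $\eta$, $\xi$ and the $\vp_k$ consistent (in particular to take the $\eta$-net only over $\{D(u)\le C_0\}$, so that $\eta$ may be chosen after $C_0$ and the perturbation step has no circularity).
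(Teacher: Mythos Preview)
Your overall strategy --- discretize the pairs $(S,u)$ to a finite family, iterate the (loosely) Gurarii property over that family as in Remark~\ref{iter}, and repair by perturbation --- is exactly the paper's approach, including the reduction to a single $L$ and the net of subspaces. The paper is simply much terser: it asserts without further comment that a finite $\delta''$-net in the unit sphere of $CB(S,E)$ suffices by perturbation, and does not isolate the large-distortion regime you flag. You are right that this is where the real care is needed, since the perturbation constants in Remark~\ref{pertu} depend on the biorthogonal functionals to $\phi u_j(s_i)$ and hence blow up with $D(u_j)$; the paper's sentence sweeps this under the rug.

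Your push-out-prototype device closes this cleanly when $\cl E$ is a league: then each $\widehat E_{u^{(l)}}=[L\oplus_1 E]/G_{u^{(l)}}$ lies in $\cl E$, the completely isometric $j_{u^{(l)}}^{-1}$ feeds into the iteration to yield a near-isometric $\iota_l\colon\widehat E_{u^{(l)}}\to E_1$ with $\iota_l j_{u^{(l)}}=\phi$, and your explicit near-isometry $\theta\colon\widehat E_u\to\widehat E_{u^{(l)}}$ transports the exact push-out extension $\tilde u_0$ (with $D(\tilde u_0)=D(u)$) into $E_1$ with distortion loss $\le D(\iota_l)D(\theta)$, which is small. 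For a merely loosely Gurarii $\cl E$, however, your workaround has a gap. The universal property after Lemma~\ref{l1} gives only a \emph{completely contractive} map $T\colon\widehat E_{u^{(l)}}\to E^{(u^{(l)})}$; on $\tilde u_0^{(l)}(L)$ its distortion is essentially $D(u^{(l)})$, not close to $1$, so ``dominates the amalgam with controlled loss'' is not justified. Unwinding, your extension becomes $\iota^{(l)}\tilde u^{(l)}+\phi(u-u^{(l)})P$, which is just the naive net-plus-correction with the same unresolved comparison between $D(u)$ and $D(u^{(l)})$ that you were trying to avoid. Since every concrete $\cl E$ in the paper is either a league or one of the injective $\oplus_\infty$-stable classes of Lemma~\ref{18/3} (where a separate direct argument is available), this may not affect the applications; but as a proof of the lemma in its stated generality the loosely Gurarii case is incomplete, and you should either supply a different argument for that regime or explicitly restrict to leagues.
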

    \begin{proof}  Assume first that the set $\{ L_0,\cdots,L_n\}$ is reduced to a single $L\in \cl E$.
The claim in this lemma  is clear for a single $S$ and a single  $u: S \to E$,
 and hence by iteration (as in Remark \ref{iter}) for any finite set of $(S,u)$'s.
A compactness argument will show that this suffices.
 Indeed, by perturbation (see Remark \ref{pertu}), it suffices to show the claim
 for a finite set of subspaces $S\subset L$ generated say by
 points in a $\d''$-net in the unit ball of $L$ for $\d''>0$ chosen suitably small. 
 By homogeneity we may restrict to $u$ such that $\|u\|_{cb}=1$.
 Furthermore, if the claim holds for all $u$ in  a finite $\d''$-net in the unit sphere of $CB(S,E)$
 with $\d''>0$  small enough, we get the same result  
 for all $u: S \to E$.
 We are thus reduced to checking the claim
 for a finite set of $(S,u)$'s for some $\d''$ chosen suitably small. This completes the proof for a single $L$,
 say for $L=L_0$. But actually the discretization 
 we just used can be used identically for each space in the finite set $\{ L_0,\cdots,L_n\}$,
 so the claim follows as stated in the lemma.
 An alternative for the final argument: applying the same result with $E$ replaced by $E_1$ and $L_0$ replaced by $L_1$,
 we obtain the claim in the lemma for $L\in \{ L_0,L_1\}$, and  iterating  further
 we obtain it for any $L\in \{ L_0,\cdots,L_n\}$.
   \end{proof}
    
     \begin{lem}\label{l11/3'}           Let $\cl E, E,  L_0,\cdots,L_n $ be as in Lemma \ref{l11/3}.
     Assume that we are given f.d. spaces $Y_0,Y_1\in \cl{SE}$,
         an      embedding $T_0: Y_0 \to Y_1 $,
     and an embedding  $v_0 : Y_0 \to E $. 
        Then there is $\hat E_1\in \cl E$,
       an embedding $\phi_0: E \to \hat E_1$ with
       $D'(\phi_0) \le (1+\d')^3   D'(T_0)  D'( v_0) $
        and a  $\d'$-embedding $v_1: Y_1\to \hat E_1$
        such that $   v_1T_0= \phi_0 v_0$. In addition,
        for any $L\in \{ L_0,\cdots,L_n\}$, any  $S\subset L$ and any $u: S \to E$
 there is $\hat u: L \to \hat E_1$ 
 such that ${\hat u}_{|S}= \varphi_0 u$ and 
  $D(\hat u) \le (1+\d) D(u) [(1+\d')^2   D'(T_0)  D'( v_0)]^2 $.
       \end{lem}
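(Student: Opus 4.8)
The idea is to obtain both conclusions from two successive applications of Lemma \ref{l11/3}; the one delicate point — and what I expect to be the real obstacle — is making the distortion bookkeeping land on the right map. One cannot simply extend $v_0$ \emph{along} $T_0$, for that would force $v_1$ to absorb the distortions of both $T_0$ and $v_0$, whereas the statement demands that $v_1$ be a $\d'$-embedding. The trick is to reverse the diagram: one extends instead the transition map $\iota T_0 v_0^{-1}$, placing a space of $\cl E$ that contains $Y_1$ in the role played by $E$ in Lemma \ref{l11/3} (the almost isometrically embedded one), so that all of the distortion is shifted onto the connecting map $\phi_0$, which the statement allows to be distorted by up to $(1+\d')^3D'(T_0)D'(v_0)$. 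Granting this, the rest is routine manipulation of $D$ and $D'$ by means of \eqref{dD'}, \eqref{dD''}, the inequality $D(\cdot)\le D'(\cdot)^2$ and Remark \ref{r11/3}.

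Here is the plan in detail. Since $Y_1\in\cl{SE}$, I would first fix a space $L\in\cl E$ with $Y_1\subset L$ completely isometrically; write $\iota\colon Y_1\hookrightarrow L$, put $S_0=v_0(Y_0)\subset E$, and set $u'=\iota\, T_0\, v_0^{-1}\colon S_0\to L$, which is injective and satisfies $D'(u')\le D'(\iota)D'(T_0)D'(v_0)=D'(T_0)D'(v_0)$ by \eqref{dD''}. I then apply Lemma \ref{l11/3} with $L$ in the role of $E$ and with the one-element list $\{E\}$ in the role of $\{L_0,\dots,L_n\}$, choosing small auxiliary parameters $\d_0$ (in the role of $\d$) and $\vp_0$ (in the role of $\d'$). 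This produces $E^{(1)}\in\cl E$, a $\vp_0$-embedding $\Psi\colon L\to E^{(1)}$, and an extension $\phi^{(1)}\colon E\to E^{(1)}$ of $u'$, i.e.\ ${\phi^{(1)}}_{|S_0}=\Psi u'$, with $D(\phi^{(1)})\le(1+\d_0)D(u')$, whence $D'(\phi^{(1)})\le(1+\d_0)D'(T_0)D'(v_0)$ by Remark \ref{r11/3}. Set $v^{(1)}=\Psi\iota\colon Y_1\to E^{(1)}$; this is a $\vp_0$-embedding, and unwinding the identity ${\phi^{(1)}}_{|S_0}=\Psi u'$ gives $\phi^{(1)}v_0=v^{(1)}T_0$.

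Next I would apply Lemma \ref{l11/3} a second time, now with $E^{(1)}$ in the role of $E$ and with $L_0,\dots,L_n$ themselves as the list, using the \emph{given} $\d$ (in the role of $\d$) and a small $\vp_1>0$ (in the role of $\d'$). This produces $\hat E_1\in\cl E$, a $\vp_1$-embedding $\tau\colon E^{(1)}\to\hat E_1$, and, for every $L_i$, every subspace $S\subset L_i$ and every injective $h\colon S\to E^{(1)}$, an extension $\tilde h\colon L_i\to\hat E_1$ with $\tilde h_{|S}=\tau h$ and $D(\tilde h)\le(1+\d)D(h)$. I then set
$$\phi_0=\tau\phi^{(1)}\colon E\to\hat E_1,\qquad v_1=\tau v^{(1)}\colon Y_1\to\hat E_1.$$

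It remains to verify the assertions. Compatibility is immediate: $v_1T_0=\tau v^{(1)}T_0=\tau\phi^{(1)}v_0=\phi_0v_0$. By \eqref{dD''} one has $D'(v_1)\le(1+\vp_1)(1+\vp_0)$ and $D'(\phi_0)\le(1+\vp_1)(1+\d_0)D'(T_0)D'(v_0)$, so choosing $\vp_0,\vp_1,\d_0$ small enough in terms of $\d'$ makes $v_1$ a $\d'$-embedding and gives $D'(\phi_0)\le(1+\d')^3D'(T_0)D'(v_0)$. Finally, given $L_i\in\{L_0,\dots,L_n\}$, a subspace $S\subset L_i$ and an injective $u\colon S\to E$, I apply the extension property of the second step to $h=\phi^{(1)}u\colon S\to E^{(1)}$ (injective since $\phi^{(1)}$ and $u$ are), obtaining $\hat u=\tilde h\colon L_i\to\hat E_1$ with $\hat u_{|S}=\tau\phi^{(1)}u=\phi_0u$ and
$$D(\hat u)\le(1+\d)D(\phi^{(1)})D(u)\le(1+\d)D'(\phi^{(1)})^2D(u)\le(1+\d)\bigl[(1+\d_0)D'(T_0)D'(v_0)\bigr]^2D(u),$$
which is $\le(1+\d)\bigl[(1+\d')^2D'(T_0)D'(v_0)\bigr]^2D(u)$ as soon as $1+\d_0\le(1+\d')^2$, as required.
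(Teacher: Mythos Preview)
Your proof is correct and uses the same essential ingredients as the paper: two successive applications of Lemma \ref{l11/3}, together with the key ``reversal'' insight that all the distortion must land on $\phi_0$ rather than on $v_1$. The organization, however, is swapped. The paper first applies Lemma \ref{l11/3} to $E$ with the list $\{L_0,\dots,L_n\}$ to get an intermediate $E_1$ and $\phi\colon E\to E_1$, and then applies it a second time with $Y_1$ in the role of $E$ and the \emph{augmented} list $\{L_0,\dots,L_n\}\cup\{E_1\}$; the intertwining $\tilde T_0\colon E_1\to\hat E_1$ is then extracted via Lemma \ref{18.12}, and one sets $\phi_0=\tilde T_0\phi$, $v_1=\psi$, $\hat u=\tilde T_0\tilde u$. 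You instead first fix $L\in\cl E$ containing $Y_1$, apply Lemma \ref{l11/3} to $L$ with the singleton list $\{E\}$ to get $\phi^{(1)}$ (doing by hand what Lemma \ref{18.12} encapsulates), and only afterwards handle the $L_i$-extensions. Your ordering has the mild advantage that the second application is to a genuine member of $\cl E$ (namely $E^{(1)}$), whereas the paper's second application places $Y_1\in\cl{SE}$ in the role of $E$ in Lemma \ref{l11/3}, which strictly speaking calls for a word of justification. One small remark: your invocation of Remark \ref{r11/3} to pass from $D(\phi^{(1)})\le(1+\d_0)D(u')$ to $D'(\phi^{(1)})\le(1+\d_0)D'(u')$ tacitly treats $\Psi$ as completely isometric; since $\Psi$ is only an $\vp_0$-embedding one actually picks up an extra $(1+\vp_0)$ factor, but this is harmless as $\vp_0$ is at your disposal (the paper's proof has the same slight imprecision when invoking \eqref{21.12'}).
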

    \begin{proof}  
    Having obtained $E_1$ as in Lemma \ref{l11/3} we apply 
    the same lemma again  but with  $E$ replaced by $Y_1$ and with the augmented family
    $ \{ L_0,\cdots,L_n\} \cup\{E_1\}$. 
    Moreover, since $\d>0$ is arbitrarily small
    we may take   $\d=\d'$ and $\d'$ as before for this new application. This gives us
    a space $\hat E_1$ and a $\d'$-embedding
    $\psi: Y_1 \to \hat E_1$ such that 
    for any $L\in \{ L_0,\cdots,L_n\} \cup\{E_1\}$, any  $S\subset L$ and any $u: S \to Y_1$
 there is $\tilde u: L \to \hat E_1$ 
 such that ${\tilde u}_{|S}= \psi u$ and $D(\tilde u) \le (1+\d') D(u)$, which by \eqref{21.12'} implies
  $D'(\tilde u) \le (1+\d') D'(u)$. In particular, applying this to $u=T_0$, $S=Y_0$, $L=E_1$
  with the embedding $\phi v_0: Y_0 \to E_1$
   and recalling Lemma \ref{18.12},
  there is $\tilde T_0: E_1 \to \hat E_1$ such that
  $\tilde T_0 \phi v_0= \psi T_0$ and
  $$D'(\tilde T_0) \le (1+\d') D'(T_0) D'(\phi v_0) \le (1+\d') D'(T_0) D'(\phi)D'( v_0) \le  (1+\d')^2   D'(T_0)  D'( v_0).$$
   $$\xymatrix{&&E_1\ar[r]^{ \tilde T_0 }& \hat E_{1}  &   \\
&E\ar[ur]^{\phi}& Y_0\ar[l]^{v_0} \ar[r]^{T_0} \ar[u]_{\phi v_0}
   & Y_1\ar[u]^{\psi}  &  }$$
  We now set $\phi_0= \tilde T_0 \phi: E \to \hat E_1$ and $v_1=\psi$.
  We have $$D'(\phi_0) \le D'(\tilde T_0) D'( \phi) \le (1+\d') D'(\tilde T_0)
  \le (1+\d')^3   D'(T_0)  D'( v_0).$$ Also $   v_1T_0= \phi_0 v_0$ and 
        $D'(v_1)= D'(\psi)\le\d'$. As for the last assertion,
        if $\tilde u$ is as in
        Lemma \ref{l11/3}
       we set $\hat u=\tilde T_0 \tilde u$, so that
       $$D(\hat u) \le D(\tilde u)D(\tilde T_0) \le (1+\d) D(u) D'(\tilde T_0)^2.$$
       This settles the last assertion.
    \end{proof}
 
 The next result shows in particular (say when $Y=\{0\}$) that $\cl E$-Gurarii spaces 
 always exist
   whenever $\cl E$ is  separable and loosely   Gurarii. Our proof is similar to  
   Oikhberg's one   for the ``exact" analogue  \cite[Th. 1.1]{Oi}.
 
  \begin{thm}\label{t4/2p} Let $\cl E$ be a separable loosely   Gurarii class.
  For any  separable o.s.  $Y$ that locally embeds in $\cl E$ there is a
     separable $\cl E$-Gurarii o.s. $X$ containing $Y$ completely isometrically.
 The space $X$ can be written as $X=\ovl{\cup E_n}$
 for some increasing sequence
 $(  E_n)$ of  f.d. subspaces of $X$ such that   $d_{cb}(E_n,\cl E)\to 1$.
 \\ If $\cl E$ is a separable   Gurarii   class, there is an
   $\cl E$-Gurarii o.s.  of the form $X=\ovl{\cup E_n}$ 
    with  $E_n\in \cl E$ for all $n$ such that  for any $\vp>0$ there is an $\vp$-embedding of $Y$ in  $X$. 
 \end{thm}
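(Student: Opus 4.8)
Build $X$ by a one-sided exhaustion (``going up'') argument in the style of Kubi\'s--Solecki and Oikhberg: realize $X$ as an inductive limit $\cl L(\{E_n,\phi_n\})$ of finite dimensional spaces in which a copy of $Y$ has been threaded through all stages, and in which every extension task permitted by the $\cl E$-Gurarii property has already been resolved. Using separability of $\cl E$, fix a countable $d_{cb}$-dense family $\{L^{(k)}\}_{k\ge1}\subset\cl E$, and fix small reals $\d_n>0$ with $\sum_n\d_n<\infty$. Since $Y$ is separable and locally embeds in $\cl E$, Remark \ref{dr4} lets me write $Y=\ovl{\cup_n Y_n}$ with $Y_1\subset Y_2\subset\cdots$ finite dimensional and $d_{cb}(Y_n,\cl{SE})=1$; choose $G_n\in\cl{SE}$ and a $\d_n$-isomorphism $w_n\colon Y_n\to G_n$, and set $\psi_n=w_{n+1}w_n^{-1}\colon G_n\to G_{n+1}$, a $\d_n'$-embedding with $\sum\d_n'<\infty$ and $\psi_nw_n=w_{n+1}|_{Y_n}$.

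\textbf{Inductive step.} Construct inductively finite dimensional $E_n$, embeddings $\phi_n\colon E_n\to E_{n+1}$ which are $\eta_n$-embeddings with $\sum\eta_n<\infty$, and embeddings $f_n\colon Y_n\to E_n$ which are $\vp_n$-embeddings with $\vp_n\to0$ and are compatible, i.e.\ $\phi_nf_n=f_{n+1}|_{Y_n}$ (start from $E_1\in\cl E$ containing $G_1$). The engine is Lemma \ref{l11/3'}, applied at stage $n$ with $E=E_n$, with $\{L^{(1)},\dots,L^{(n)}\}$ in the role of $\{L_0,\dots,L_n\}$, with $Y_0=G_n$, $Y_1=G_{n+1}$, $T_0=\psi_n$, $v_0\colon G_n\hookrightarrow E_n$ the embedding already in hand, and with both tolerances in that lemma chosen $\le\d_n$. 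It produces $E_{n+1}$, an embedding $\phi_n\colon E_n\to E_{n+1}$ with $D'(\phi_n)\le(1+\d_n)^3D'(\psi_n)D'(v_0)$ (whence $\sum\eta_n<\infty$, since $D'(\psi_n)$ and $D'(v_0)$ stay close to $1$), a $\d_n$-embedding $v_1\colon G_{n+1}\to E_{n+1}$ with $v_1\psi_n=\phi_nv_0$, and, for every $L\in\{L^{(1)},\dots,L^{(n)}\}$, every $S\subset L$, and every $u\colon S\to E_n$, an extension $\hat u\colon L\to E_{n+1}$ of $\phi_nu$ with $D(\hat u)\le(1+\d_n)[(1+\d_n)^2D'(\psi_n)D'(v_0)]^2D(u)$. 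Put $f_{n+1}=v_1w_{n+1}$; then $\phi_nf_n=\phi_nv_0w_n=v_1\psi_nw_n=v_1w_{n+1}|_{Y_n}=f_{n+1}|_{Y_n}$. Finally let $X=\cl L(\{E_n,\phi_n\})=\ovl{\cup_n j_n(E_n)}$ as in Remark \ref{ind}, where $j_n$ is completely $(1+\theta_n)$-isometric with $\theta_n\to0$.

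\textbf{Verification.} (i) $Y\hookrightarrow X$ completely isometrically: the maps $g_n=j_nf_n\colon Y_n\to X$ satisfy $g_{n+1}|_{Y_n}=j_{n+1}\phi_nf_n=j_nf_n=g_n$, so they glue to $g\colon\cup Y_n\to X$; each $g_k$ is a $\mu_k$-embedding with $\mu_k\to0$ (combine $\vp_k$ with $\theta_k$), and for $y\in Y_m$ the number $\|g(y)\|_X=\|g_k(y)\|$ does not depend on $k\ge m$ while lying in $[(1+\mu_k)^{-1},1+\mu_k]\cdot\|y\|$ for every such $k$; letting $k\to\infty$ forces $\|g(y)\|=\|y\|$, and likewise at every matrix level, so $g$ extends to a complete isometry $Y\to X$. (ii) $X$ has the $\cl E$-Gurarii property of Definition \ref{d1}: given $L\in\cl E$, $S\subset L$, an injective $u\colon S\to X$, and $\vp>0$, transport $S,u$ across a near-isometry $L^{(k)}\to L$ with $d_{cb}(L^{(k)},L)<1+\vp$; since $S$ is finite dimensional and $X=\ovl{\cup_m j_m(E_m)}$, perturb the new $u$ into $j_m(E_m)$ for some $m$; then pick $n\ge\max(k,m)$ so large that every distortion factor occurring in the stage-$n$ estimate is $\le\vp$, push $u$ into $E_n$ via $\phi_{n-1}\cdots\phi_m\circ j_m^{-1}$, invoke the stage-$n$ conclusion for $L^{(k)}$ to get an extension into $E_{n+1}\subset X$, transport it back to $L$, and correct it by Remark \ref{pertu} to an exact extension $\tilde u\colon L\to X$ of $u$ with $D(\tilde u)\le(1+\vp)D(u)$. (iii) $X$ locally embeds in $\cl E$: each $j_n(E_n)$ is completely $(1+\theta_n)$-isometric to $E_n\in\cl E$, so $d_{cb}(j_n(E_n),\cl E)\to1$ and Remark \ref{dr4} applies. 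Hence $X$ is an $\cl E$-Gurarii space of the stated form containing $Y$ completely isometrically.

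\textbf{The Gurarii case, and the main obstacle.} If $\cl E$ is Gurarii rather than merely loosely Gurarii, Lemma \ref{l11/3}, hence Lemma \ref{l11/3'}, may be run with the embedding $\phi$ \emph{completely isometric}; then every $\phi_n$, hence every $j_n$, is a complete isometry, so $j_n(E_n)\in\cl E$ and one gets $X=\ovl{\cup E_n}$ with $E_n\in\cl E$. The price is that the copy of $Y$ is now built only from the $\d_n$-isomorphisms $w_n\colon Y_n\to G_n$; arranging $\prod_n(1+\d_n)\le1+\vp$, the glued map $g$ is a $\vp$-embedding of $Y$ into $X$ rather than an isometry. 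The substantive difficulty throughout is the bookkeeping: one must be sure that \emph{every} triple $(L,S,u)$ with $L\in\cl E$ reduces --- by density of $\{L^{(k)}\}$ and a perturbation of $u$ into some $E_m$ --- to a task genuinely executed at some stage, and that this stage can always be chosen far enough out for the fixed summable tolerances $\d_n$ to be already below the prescribed $\vp$. It is precisely because Lemma \ref{l11/3} disposes, in one stroke, of \emph{all} subspaces $S\subset L$ and \emph{all} maps $u\colon S\to E_n$ for a finite family of $L$'s that this bookkeeping needs only a single enumeration, and it is the limsup argument in (i) that lets the accumulated tolerances be washed out so that $Y$ (resp.\ a $\vp$-copy of $Y$) actually sits inside the limit.
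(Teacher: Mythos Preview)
Your treatment of the loosely Gurarii case is essentially the paper's own argument: you realize $Y$ as an inductive limit of spaces $G_n\in\cl{SE}$ with connecting maps $\psi_n$ (this is exactly Remark~\ref{pdr4}), and then drive the construction with Lemma~\ref{l11/3'} at each stage, obtaining $E_n\in\cl E$, near-isometric $\phi_n$, and intertwining embeddings $G_n\to E_n$. The verification that the glued map $Y\to X$ is completely isometric and that $X$ has the $\cl E$-Gurarii property is correct and matches the paper.

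There is, however, a genuine gap in your treatment of the Gurarii case. You assert that when $\cl E$ is Gurarii, ``Lemma~\ref{l11/3}, hence Lemma~\ref{l11/3'}, may be run with the embedding $\phi$ completely isometric; then every $\phi_n$, hence every $j_n$, is a complete isometry.'' This is not what Lemma~\ref{l11/3'} provides. Look at its conclusion: the embedding $\phi_0\colon E\to\hat E_1$ satisfies $D'(\phi_0)\le(1+\d')^3\,D'(T_0)\,D'(v_0)$, and in the proof $\phi_0=\tilde T_0\,\phi$ where $\phi$ comes from Lemma~\ref{l11/3} (and is indeed completely isometric when $\cl E$ is Gurarii) but $\tilde T_0$ carries the distortion of $T_0$ and $v_0$. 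In your application $T_0=\psi_n$ and $v_0$ both arise from the $\d_n$-isomorphisms $w_n$, so $D'(\phi_0)>1$ even with $\d'=0$. Consequently the $\phi_n$'s are not completely isometric, the $j_n$'s are only $(1+\theta_n)$-isometric, and $j_n(E_n)$ need not belong to $\cl E$; the conclusion ``$X=\ovl{\cup E_n}$ with $E_n\in\cl E$'' is therefore unjustified.

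The paper handles the Gurarii case differently, and for exactly this reason: it first builds $X$ using \emph{only} Lemma~\ref{l11/3} (no threading of $Y$), which in the Gurarii case does yield completely isometric $\phi_n$ and hence genuine subspaces $j_n(E_n)\in\cl E$; the $\vp$-embedding of $Y$ into $X$ is then obtained afterwards, by a direct induction using the already-established $\cl E$-Gurarii property of $X$. The paper even flags the issue explicitly: ``to produce an $X$ for which this holds for $\vp=0$ some more care is needed\dots\ We will need to allow $\varphi_n$'s that are $\d_n$-isometric instead of c.i.'' In other words, you cannot have both the completely isometric copy of $Y$ and the exact containment $E_n\in\cl E$ from a single run of the Lemma~\ref{l11/3'} machine; the two parts of the theorem are proved by two different constructions.
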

 
 \begin{proof} By Remark \ref{pdr4} we may view $Y$ as the limit $\cl L(\{Y_n,T_n\})$ of a sequence $Y_n\in \cl{S E}$
 with respect to   $\vp_n$-embeddings  $T_n : Y_n \to Y_{n+1}$,
 with $\sum \vp_n <\infty$. Let $\d'_n$ be decreasing and
  such that $\sum \d'_n<\infty$. Obviously there is a
  positive  sequence $(\d_n)$ such that $\sum \d_n<\infty$
 and
  \begin{equation}\label{17.11}
   ( 1+\d_{n+1}' )^5 (1+\vp_n)^2(1+\d_n')^2 \le 1+\d_{n} \quad  \forall n.\end{equation}
 Starting from $E_0\in \cl E$ chosen arbitrarily,
 we will construct $X$ as a limit of a sequence  $E_n\in \cl{ E}$ 
 with respect to   $\d'_n$-embeddings    $\phi_n : E_n \to E_{n+1}$.
 When $\cl E$ is Gurarii  (and we drop the requirement on $Y$) we can take $\d'_n=0$ so that
  $\phi_n$ is completely  isometric.
   Let $\cl E' =\{L_n\}$ be a dense sequence in $\cl E$.
\\  Assume first that $\cl E$ is Gurarii.
  We will construct $(E_n)$ by induction so that
  for any
  $n\ge 0$
   we have $E_n\in \cl E$ and a complete isometry $\phi_n: E_n \to E_{n+1}$ satisfying  the following condition:
  
 $(*)_n$    For  any  $L\in \{L_0,\cdots,L_n\}$, any
  $S\subset L$ and any $u: S \to E_n$ 
  there is $\tilde u: L \to E_{n+1}$  such that  $\tilde u_{|S}=\phi_n u$
  and $D(\tilde u) \le  (1+\d_n)D(u)$. 
    $$\xymatrix{&L\ar[r]^{ \tilde u }& E_{n+1} \ar[r]^{ j_{n+1} \ \ }& \ j_{n+1}(E_{n+1}) \\
& S\ \ \ar@{^{(}->}[u] 
 \ar[r]^{u} & E_n\ar[u]_{\phi_n} \ar[r]^{ j_{n} } & j_n(E_n) \ar@{^{(}->}[u] }$$
  The verification of the induction step is a consequence of
  Lemma \ref{l11/3}.
  Let then $X=\cl L(\{E_n,\phi_n\})=
   \ovl{\cup j(E_n)}$ as in Remark \ref{ind}.
   Clearly,   $ j_{n}: E_n \to j_n(E_n)$ is completely isometric,
   so that   the inclusion $j_n(E_n) \subset j_{n+1}(E_{n+1})$ has the 
  same property as $\phi_n$ in the preceding diagram. 
 To check the Gurarii property,
 we must consider $L\in \cl E$, $\vp>0$
 and $u:S \to X$ as in \eqref{e32}.
 By the density of $\cl E'$ it suffices to consider $L\in \cl E'$.
 By the density of $ {\cup j_n(E_n)}$ and perturbation we may assume 
 that $u(S)\subset j_m(E_m)$ for some $m$ such that  $\d_m <\vp$.
 A fortiori $u(S)\subset j_n(E_n)$ for all $n\ge m$, and hence
  we may assume that $L\in  \{L_0,\cdots,L_n\}$ and $\d_n <\vp$.
  Then the preceding property of the inclusion $j_n(E_n) \subset j_{n+1}(E_{n+1})$
 gives us the desired Gurarii property. 
 At this point in the proof, the Gurarii property 
 of  the space
 $X $ and an easy induction argument shows that
 for any $\vp>0$ there is an $\vp$-embedding of $Y$ in  $X$. 
 But to produce an $X$ for which this holds for $\vp=0$
 some more care is needed. Curiously, the argument below seems to require
 to modify $X$ (although by Theorem \ref{t5} in many cases
 we have uniqueness). We will need to allow $\varphi_n$'s that are
 $\d_n$-isometric instead of c.i.

 Now if $\cl E$ is only loosely Gurarii, 
 we claim that there is a sequence $(E_n)$ in $\cl E$ and
  $\d_n$-embeddings $\phi_n: E_n \to E_{n+1}$ such that the pair
 $(E_n,\phi_n)$ satisfies $(*)_n$ for all $n\ge 0$,  together with   embeddings
  $v_n : Y_n \to E_n$ such that  $D'(v_n)\le 1+\d'_n $  and
    such that $v_{n+1} T_n= \varphi_n v_n$ for all $n\ge 0$.
Again we set  $X=\cl L(\{E_n,\phi_n\})=
   \ovl{\cup j_n(E_n)}$ as in Remark \ref{ind}.
   Now $ j_{n}: E_n \to j_n(E_n)$ is no longer c.i.
   but we still have $D(j_{n}) \to 1$ so that
the same argument leads to the  Gurarii property for $X$.
Moreover, since $D'(v_n) \to 1$, 
 the sequence $(v_n)$ defines a c.i. embedding
of $Y=\cl L(\{Y_n,T_n\})$ into $X$.\\
To prove our claim, we use induction again.
Since $Y_0\in \cl{SE}$ we find $E_0 \in \cl E$ and    $v_0: Y_0\to E_0$ with $D'(v_0)\le 1+\d'_0$. 
Now let $n\ge 0$, assume that we have found $E_k,\phi_{k-1},v_k$ satisfying the  required bounds and $(*)_{k-1}$ for all $k\le n$.
We must produce $E_{n+1},\phi_{n},v_{n+1}$ satisfying
similar bounds and $(*)_{n}$. In particular
we need 
$D'(\phi_n)\le 1+\d_n$,   $D'(v_{n+1})\le 1+\d_{n+1}'$  and $v_{n+1} T_n =\phi_n v_n$.
When $n=0$ we assume
given only $E_0, v_0$, the argument that follows will produce $E_1, \phi_0, v_1$.
To produce the case $k=n+1$, we first 
invoke Lemma \ref{l11/3'} with $\d,\d'>0$ to be specified :
taking $E=E_n$, this gives us a space            
 a space $\hat E_1\in \cl E$ 
and an  embedding $\phi_{n}: E_n \to \hat E_1$
such that 
for any $L\in \{ L_0,\cdots,L_{n}\}$, any  $S\subset L$ and any $u: S \to E$
 there is $\tilde u: L \to \hat E_1$ 
 such that 
 \begin{equation}\label{17.12}
 {\tilde u}_{|S}= \varphi_n u \text{  and  } 
  D(\tilde u) \le (1+ \d) D(u) [(1+\d')^2   D'(T_n)  D'( v_n)]^2.
\end{equation} 
We also have 
$D'(\phi_n)\le (1+\d')^3   D'(T_n)  D'( v_n) $.
Moreover, we have an embedding $v_{n+1}: Y_{n+1} \to \hat E_1 $ such that 
$v_{n+1}T_n= \phi_n v_n$.
 We have $D'(v_{n+1})\le 1+\d'$.
 
 It remains to check $(*)_{n}$.
From \eqref{17.12}
one finds  the constant in  $(*)_{n}$ majorized by
$$  (1+\d)   [(1+\d')^2   D'(T_n)  D'( v_n)]^2.$$
 To conclude, we choose $\d=\d' =\d_{n+1}'$. This ensures $D'(v_{n+1})\le 1+\d_{n+1}'$.
 Then by \eqref{17.11} we have
 $$  (1+\d)   [(1+\d')^2   D'(T_n)  D'( v_n)]^2 \le ( 1+\d_{n+1}' )^5 (1+\vp_n)^2(1+\d_n')^2  \le 1+\d_{n}$$ 
 which yields the desired constant in $(*)_{n}$.
Moreover,
$$D'(\phi_n)\le (1+\d')^3   D'(T_n)  D'( v_n) \le ( 1+\d_{n+1}' )^3 (1+\vp_n)(1+\d_n') \le 1+\d_{n}.$$
This completes the induction step and the proof.
 \end{proof}

 \begin{rem}[Basic example 1]\label{bex}   Consider the class
   $\cl E=\{\ell_\infty^n \mid n\ge 1\}$. We claim that it is Gurarii.
 To check this let   $u: S \to E$ and $L$ with $L,E\in \cl E$.
 By the Banach space version described at the beginning of  Lemma \ref{l1},
 we know 
 that the class of all f.d.
 spaces is tightly Gurarii, so  there is a f.d. o.s. $D_1$,   an isometric embedding
 $E\subset D_1$ and $\hat u: L \to D_1$ extending $u$ and such that 
 $D(\hat u)     =D(  u) .$
  For any $\d>0$ there is $G\in \cl E$ and an embedding $v: D_1 \to  G$
  such that $\|v\|\le 1$ and $\|{v^{-1}}_{|v(D_1)} \|\le 1+\d$. Since $E$ is injective, there is 
  a projection $P: D_1 \to E$ with $\|P\|=1$. Let  $E_1= G \oplus_\infty E$,
  let $w: D_1 \to E_1$ be defined by $w(x)=v(x) \oplus P(x)$, 
   let $\tilde u= w \hat u$ and $j= w_{|E}$. 
  Then $j$ is isometric  and ${\tilde u}_{|S}=w   u= ju$.
  Moreover, we have $\|{\tilde u^{-1}}_{|\tilde u(L)} \| \le  \|v^{-1}\| \|{\hat u^{-1}}_{|\hat u(L)} \|
  \le 
  (1+\d)  \|{\hat u^{-1}}_{|\hat u(L)} \|$.
  Thus, we have    $$ D(\tilde u )  \le (1+\d) 
  D(\hat u) =
  (1+\d)D(   u) .$$
  Clearly $E_1= G \oplus_\infty E\in \cl E$. 
  This gives us 
  \eqref{4/2} with ordinary norms in place of the c.b. ones.
  But if we equip $L,E_1$ and $E$ with their minimal o.s. structure
  we obtain  \eqref{4/2} with c.b. norms and  $j$ is completely isometric, so that indeed
  $\cl E$  is
  Gurarii.
  \end{rem}
  \begin{rem}\label{bex'}
  When $\cl E=\{\ell_\infty^n \mid n\ge 1\}$  the    space $X$ given by Theorem \ref{t4/2p} is clearly a minimal o.s. (i.e. an operator subspace of a commutative $C^*$-algebra)
  and the underlying Banach space
  possesses  the characteristic property of    the classical Gurarii space $\bb G$. By the uniqueness of the latter (that we will reprove in \S \ref{uniq})
$X\simeq \bb G$ isometrically and hence $X \simeq MIN(\bb G)$ completely isometrically 
where (following the notation in \cite{BP})  $MIN(\bb G)$ denotes  $\bb G$ equipped with its minimal o.s. structure.   Note that by Theorem \ref{t4/2p} 
we have $ \bb G=\ovl {\cup E_n}$ for subspaces $\cdots E_n\subset E_{n+1}\subset \cdots$ such that $E_n\simeq \ell_\infty^{k(n)}$ isometrically. The latter fact is a well known property
of $ \bb G$, see e.g. \cite{LL} for more on this theme.
    \end{rem}
   \begin{rem}[Basic example 2]\label{bex2} Similarly the class $\cl E=\{M_n \mid n\ge 1\}$ is Gurarii. 
  Since $M_n$ is injective the same argument as in Remark \ref{bex} works.
   The space $X$ given by Theorem \ref{t4/2p} is then  completely isometric to Oikhberg's exact Gurarii space (but our argument to show that $ \{M_n \mid n\ge 1\}$ is Gurarii seems simpler). See \S \ref{oik} for more details.
   \end{rem}
  \begin{rem}[An easy exercise]  Let $\cl E=\{R_n\}$ (resp. $\cl E=\{C_n\}$) or let $\cl E$ be the collection of all the f.d. subspaces of a 1-homogeneous 1-Hilbertian o.s.
   $\cl H$  in the sense of \cite[p.172]{P4}. Then $\cl E$ is tightly Gurarii
   and  the space given by Theorem \ref{t4/2p}
is $X=R$ (resp. $X=C$)  or $X=\cl H$.  \end{rem}

    In practise,
    the notion of Gurarii class seems too  ``abstract".  It will be convenient to 
    work with  classes $\cl E$ satisfying certain basic stability properties,
    that we call ``leagues":
    
      \begin{dfn}\label{rass} 
      A class   $\cl E$ will be called a league if it is stable 
      under $\oplus_1$-direct sums and quotients.  Assuming $\{0\}\in \cl E$, this means that
       for any  $E_1,E_2\in \cl E$ 
        and 
       for any subspace $N\subset E_1\oplus_1 E_2$  we have $(E_1\oplus_1 E_2)/N
       \in \cl E$.
     (We also recall   that by convention
     any space that is completely isometric to one in $\cl E$ is also in $\cl E$.)
     \end{dfn}
     
      \begin{pro}\label{p12/3} If $\cl E$  is a league
  then $\cl E$ is tightly Gurarii.  \end{pro}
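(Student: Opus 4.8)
The plan is to read the statement off the push-out construction of Lemma~\ref{l1} almost for free, the role of the league hypothesis being solely to guarantee that the push-out space stays inside $\cl E$. So let $u\colon S\to E$ be an injective map with $S\subset L$ and $L,E\in\cl E$, as in Definition~\ref{d5/2}. Since $D(tu)=D(u)$ for every $t>0$, I would first normalize so that $\|u\|_{cb}=1$; this is the only preparatory step.

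Next I would simply form the push-out space $E_1=[L\oplus_1 E]/G_u$ together with the maps $j\colon E\to E_1$ and $\tilde u\colon L\to E_1$ exactly as in Lemma~\ref{l1}. Here is the one place the ``league'' hypothesis enters: $L\oplus_1 E\in\cl E$ by stability under $\oplus_1$, and then $E_1=(L\oplus_1 E)/G_u\in\cl E$ by stability under quotients (this is literally the property spelled out in Definition~\ref{rass}, with the two summands being $L$ and $E$ and the subspace being $G_u$). Note that no density, approximation or inductive-limit argument is invoked, which is precisely why the resulting conclusion will be \emph{tight}, i.e.\ valid with $\d=0$.

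It then remains to check the metric conclusion. By Lemma~\ref{l1}, since $\|u\|_{cb}=1$ the map $j$ is completely isometric, $\tilde u$ extends $u$ (i.e.\ $\tilde u_{|S}=ju$), $\|\tilde u\|_{cb}\le 1$, and the exact identity \eqref{e1} holds, namely $\|{\tilde u^{-1}}_{|\tilde u(L)}\|_{cb}=\|{u^{-1}}_{|u(S)}\|_{cb}$. Since $j$ is completely isometric and $\tilde u_{|S}=ju$, one also has $\|\tilde u\|_{cb}\ge\|u\|_{cb}=1$, hence $\|\tilde u\|_{cb}=1$, and therefore $D(\tilde u)=\|{\tilde u^{-1}}_{|\tilde u(L)}\|_{cb}=\|{u^{-1}}_{|u(S)}\|_{cb}=D(u)$. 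Taking $\phi=j$, this is exactly the tightly Gurarii condition of Definition~\ref{d5/2}. I do not expect any genuine obstacle here: Lemma~\ref{l1} already carries all the analytic content, and Proposition~\ref{p12/3} is merely the observation that a league supplies the ambient space that lemma requires; the only minor points requiring care are the degenerate case $S=\{0\}$ (where $u=0$, $G_u=\{0\}$, $E_1=L\oplus_1 E$, and the canonical inclusions do the job) and the rescaling remark $D(tu)=D(u)$, which reduces matters to $\|u\|_{cb}=1$ so that the hypothesis of Lemma~\ref{l1} applies and $j$ comes out completely isometric.
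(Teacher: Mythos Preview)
Your proof is correct and is essentially the same as the paper's: the paper just cites Lemma~\ref{l12/3}, which packages precisely the normalization $u\mapsto u/\|u\|_{cb}$ followed by the push-out of Lemma~\ref{l1} that you carry out explicitly, with the league hypothesis used exactly as you say to keep $E_1=[L\oplus_1 E]/G_u$ inside~$\cl E$.
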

  \begin{proof} This follows from Lemma \ref{l12/3}.
  \end{proof} 
     
  Our next observation   follows well known ideas going back to the
  Banach version of the Gurarii space:
    by a result due to Lusky \cite{Lus} (improving on Gurarii's initial
    $(1+\vp)$-isometrical uniqueness theorem) the latter space
  is \emph{unique} up to isometric  isomorphism. Our next statement says that
  the $\cl E$-Gurarii space-if it exists-is unique up to   completely   isometric isomorphism.
  In  \cite{Oi}, 
  Oikhberg proved the analogue of Gurarii's uniqueness result in his framework
  involving exact operator spaces, and in  \cite{L0} Lupini 
  proved the analogue of Lusky's result. 
  Imitating the argument of Kubi\'s and Solecki from \cite{Kus}
  we will prove in the next section the following uniqueness:

  \begin{thm}\label{t5}  Assume that $\cl E$ is a separable league.
  Let $X,Y$ be $\cl E$-Gurarii spaces. Then $  X \simeq Y$  
  completely isometrically. 
    \end{thm}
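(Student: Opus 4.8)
The plan is to run the back-and-forth (``zig-zag'') argument of Kubi\'s and Solecki \cite{Kus}, carried out throughout with cb-norms. Note first that the hypothesis that $\cl E$ is a separable league enters only to guarantee (via Proposition \ref{p12/3} and Theorem \ref{t4/2p}) that $\cl E$-Gurarii spaces exist at all; the uniqueness argument itself uses nothing about $X$ and $Y$ beyond the three properties packaged in Definition \ref{d2} — both are separable, both have the $\cl E$-Gurarii property, and both locally embed in $\cl E$ — together with the fact that, by Remark \ref{dr3}, an $\cl E$-Gurarii space $Z$ can extend any injective $u\colon E\to Z$ to an injective $\tilde u\colon F\to Z$ (for $E\subset F$ a pair of f.d. subspaces of any space that locally embeds in $\cl E$) with $D'(\tilde u)\le (1+\d)D'(u)$ and, crucially, $\tilde u|_E=u$ on the nose.

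For the set-up, fix a summable sequence $(\d_n)$ of positive reals and, using Remark \ref{pdr4} (or just Remark \ref{dr4} together with perturbation, Remark \ref{pertu}), present $X$ and $Y$ as inductive limits $X=\cl L(\{A_n,\phi_n\})$ and $Y=\cl L(\{B_n,\psi_n\})$ with $A_n,B_n\in\ovl{\cl{SE}}$; also fix dense sequences $(x_k)$ in $X$ and $(y_k)$ in $Y$ to be absorbed along the way. I would then construct, by induction on $n$, an increasing chain of f.d. subspaces $E_n\subset X$ and $F_n\subset Y$ together with $\d_n$-isomorphisms $f_n\colon E_n\to F_n$ and $g_n\colon F_n\to E_{n+1}$ satisfying the two exact compatibility relations $g_nf_n=\iota_{E_n,E_{n+1}}$ and $f_{n+1}g_n=\iota_{F_n,F_{n+1}}$ (the canonical inclusions), arranging simultaneously that $E_n$ eventually contains every $x_k$ and $F_n$ eventually contains every $y_k$. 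The inductive step alternates sides: at a stage where one enlarges on the $X$-side, one picks $E_n\subset E_{n+1}\subset X$ absorbing the next $x_k$ and uses Remark \ref{dr3} with the $\cl E$-Gurarii space $Y$ as target to extend $f_n\colon E_n\to Y$ to $f_{n+1}\colon E_{n+1}\to Y$, setting $F_{n+1}:=f_{n+1}(E_{n+1})$ (which contains $F_n$ since $f_{n+1}$ genuinely extends $f_n$); at a stage where one enlarges on the $Y$-side, one picks $F_n\subset \hat F_{n+1}\subset Y$ absorbing the next $y_k$ and extends $f_n^{-1}\colon F_n\to X$ via Remark \ref{dr3} (now with target $X$) to $g_n\colon \hat F_{n+1}\to X$, setting $E_{n+1}:=g_n(\hat F_{n+1})$, $F_{n+1}:=\hat F_{n+1}$ and $f_{n+1}:=g_n^{-1}$, and the identity $g_n|_{F_n}=f_n^{-1}$ forces $f_{n+1}|_{E_n}=f_n$ and the exact composite relations. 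One chooses the auxiliary $\d=\d(n)$ at each stage small enough (and, where convenient, invokes Remark \ref{pertu}) so that all accumulated distortions remain under control and the relations hold exactly.

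Having built this system, the key point is that the $f_n$ extend one another exactly, so $\bigcup_n f_n$ is a well-defined linear isomorphism of the dense subspace $\bigcup E_n$ onto the dense subspace $\bigcup F_n$; because $f_m|_{E_n}=f_n$ for all $m\ge n$ one has $D(f_n)\le D(f_m)\le 1+\d_m$ for all such $m$, whence $D(f_n)=1$ for every $n$, so $\bigcup_n f_n$ is completely isometric and extends to a complete isometry $f\colon X\to Y$; symmetrically $\bigcup_n g_n$ gives $g\colon Y\to X$, and $gf=\mathrm{id}_X$, $fg=\mathrm{id}_Y$ by the composite relations, so $f$ is a surjective complete isometry. (Equivalently, one reads $f$ and $g$ off directly as the limits of the intertwining systems, applying the last assertion of Remark \ref{ind}.) The genuinely delicate point — and the one I expect to be the main obstacle — is precisely this bookkeeping: organizing the zig-zag so that the two triangles close \emph{exactly} against the inclusion maps (which is what upgrades a mere uniform bound on the distortions to the conclusion $D(f)=1$, i.e. an honest complete isometry rather than a bounded isomorphism), and choosing the tolerances $\d(n)$ coherently across the alternating sides; this is exactly the technique of \cite{Kus}, and it requires the full $\cl E$-Gurarii property (extensions, not merely the $\cl E$-injectivity of Lemma \ref{dl5}). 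The passage from the Banach to the operator-space setting costs essentially nothing, since the $\cl E$-Gurarii property, Remark \ref{dr3}, the perturbation lemma and the inductive-limit formalism are all already stated with complete boundedness throughout.
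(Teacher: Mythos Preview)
Your proposal contains a genuine gap at the crucial point. You claim to build $\d_n$-isomorphisms $f_n\colon E_n\to F_n$ that \emph{exactly} extend one another, and then assert that ``$D(f_n)\le D(f_m)\le 1+\d_m$ for all $m\ge n$, whence $D(f_n)=1$.'' But these two requirements are incompatible. The $\cl E$-Gurarii property (via Remark~\ref{dr3}) only gives you $D'(f_{n+1})\le (1+\d)D'(f_n)$: the distortion of the extension is controlled by the distortion of the map being extended, not by the tolerance $\d$ alone. Iterating, you obtain $D'(f_n)\le \prod_{k<n}(1+\d(k))$, which is bounded but strictly greater than $1$ as soon as any nontrivial extension has been performed. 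Since $f_m|_{E_n}=f_n$, the sequence $D'(f_n)$ is \emph{nondecreasing}, so there is no mechanism for it to return to $1$. The limit $f=\bigcup f_n$ is therefore only a complete isomorphism with $D'(f)\le\prod(1+\d(k))$, not a complete isometry. Your sentence ``$D(f_m)\le 1+\d_m$'' is simply not what the construction delivers.

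This is exactly why the paper (and Kubi\'s--Solecki, whom you cite but do not actually follow) proceeds differently. The key additional ingredient is Lemma~\ref{1}: if $\cl E$ is a league, then any $\vp$-embedding $f\colon E\to F$ between spaces in $\cl E$ can be ``straightened'' inside a larger $\cl Z\in\cl E$, meaning there are \emph{completely isometric} $i\colon E\to\cl Z$, $j\colon F\to\cl Z$ with $\|jf-i\|_{cb}\le\vp$. This is precisely what allows one to trade exact extension for improved distortion: at each stage the new map $f_{n+1}$ is an $\vp_{n+1}$-embedding (with $\vp_{n+1}$ as small as one likes, independently of $D'(f_n)$), at the price that $f_{n+1}|_{E_n}$ is only \emph{close} to $f_n$ rather than equal to it. The paper packages this as the ``perturbative'' property (Lemma~\ref{23}), feeds it into Lemma~\ref{L2} and Theorem~\ref{T1}, and obtains a Cauchy sequence whose limit is completely isometric because $\vp_n\to 0$. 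The league hypothesis is thus not merely an existence input as you suggest---it is used a second time, through Lemma~\ref{1}, to obtain uniqueness.
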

  \begin{proof} This follows from  Corollary \ref{cd1} and Lemma \ref{1}.
     \end{proof}

\begin{rem}[Notation ${\bb G}_{\cl E}$]
By  Theorem \ref{t4/2p} with any $Y\in \cl E$ (or  even with $Y=\{0\}$)   there is an  $\cl E$-Gurarii o.s.
and it is unique by Theorem \ref{t5}.
We will denote it by ${\bb G}_{\cl E}$.
\end{rem}

  \begin{rem}[Some examples of leagues]\label{3/4} Let us denote by $Q\ell_1^n $ (resp. $QS_1^n$)  the class of o.s. quotients of $\ell_1^n$ (resp. $S_1^n$). 
  By Proposition \ref{p12/3}, the following 3 natural classes satisfy the assumptions
  of Theorem \ref{t4/2p}:
  \item{(i)}  $\cl E_{\max}=\cup_{n\ge 1} Q\ell_1^n  $.
   \item{(ii)}  $\cl E_1=\cup_{n\ge 1} QS_1^n$.   
      \item{(iii)}  $\cl E_{S\C}=\{E \ \text{f.d.o.s.} \mid  d_{S\C}(E)=1 \}$, where $d_{S\C}$ is as in \eqref{e34} and where
      $$\C=C^*(\F_\infty).$$
      Note that any f.d. $E$ with $d_{S\C}(E)=1$ embeds completely isometrically in $\C$ (see \cite[p. 352]{P4} or  \cite[Remark 20.8]{P6}), so
      $\cl E_{S\C}$ is simply the class of f.d. subspaces of $\C$.
      \\
      That (i) and (ii) are leagues is obvious.
      \\
        In cases (i) and (ii) the space ${\bb G}_{\cl E}$ has the strong OLLP by Corollary \ref{c1}.
        \end{rem}
      For (iii) the stability under quotient and dual is proved in \cite{JP}. It is an immediate consequence
      of the following lemma. Indeed, by duality, since the dual of $E\oplus_1 F$
      is $  E^*\oplus_\infty F^*$, \eqref{jp} implies:
$$ \forall E,F \ f.d.o.s.  \quad d_{S\C}(E \oplus_1 F) = \max \{d_{S\C}(E), d_{S\C}(F)\},$$
and also for any $N\subset E$ (since $(E/N)^*\subset E^*$):
$$d_{S\C}(E/N)\le d_{S\C}(E).$$

     \begin{lem}[\cite{JP}]
      Let $E\subset B(\ell_2)$ be a f.d.o.s. and let $E^*\subset B(\ell_2)$ be any completely isometric embedding of the dual o.s.
  Then \begin{equation}\label{jp}  d_{S\C}(E)=\sup \{\|t\|_{B(\ell_2)\otimes_{\max}B(\ell_2) } \mid t\in E^* \otimes B(\ell_2), \ \|t\|_{B(\ell_2)\otimes_{\min}B(\ell_2) }=1\}.\end{equation}
        \end{lem}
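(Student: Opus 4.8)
The plan is to rephrase both sides of \eqref{jp} in terms of completely bounded maps and of the factorization constant through $\C$, after which one inequality follows from Kirchberg's and Haagerup's theorems while the other — the hard one — amounts to constructing embeddings of $E$ into $\C$.

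\begin{proof}
Write $\B=B(\ell_2)$ and recall $\C=C^*(\F_\infty)$. Identify $E^*\otimes\B$ with the space of finite rank operators $E\to\B$, writing $\hat t\colon E\to\B$ for the operator corresponding to $t$, and let $\tau=\sum_j e_j^*\otimes e_j\in E^*\otimes E\subset E^*\otimes\B$ be the tensor with $\hat\tau=\iota_E$, the completely isometric inclusion $E\subset\B$. Since $\otimes_{\min}$ is injective and $\dim E<\infty$, we have $E^*\otimes_{\min}\B=CB(E,\B)$ completely isometrically (see \cite{ER,P4}), so $\|t\|_{\B\otimes_{\min}\B}=\|\hat t\|_{cb}$; in particular $\|\tau\|_{\B\otimes_{\min}\B}=1$. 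For a finite rank map $u\colon E\to\B$ let $\gamma_\C(u)$ denote the infimum of $\|a\|_{cb}\|b\|_{cb}$ over all factorizations $E\xrightarrow{a}\C\xrightarrow{b}\B$ with $u=b\circ a$. Since $\B$ is injective the $\B$-valued factor can always be extended from a subspace of $\C$ to all of $\C$, and a factorization $\iota_E=b\circ a$ with $\|a\|_{cb}\le 1$ is precisely an embedding $a\colon E\to\C$ with $\|a\|_{cb}\le 1$ and $\|{a^{-1}}_{|a(E)}\|_{cb}\le\|b\|_{cb}$; hence $\gamma_\C(\iota_E)=d_{S\C}(E)$. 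Consequently \eqref{jp} follows once we prove (i) $\|t\|_{\B\otimes_{\max}\B}\le d_{S\C}(E)\,\|t\|_{\B\otimes_{\min}\B}$ for every $t\in E^*\otimes\B$, and (ii) $\|\tau\|_{\B\otimes_{\max}\B}\ge d_{S\C}(E)$: indeed (i) bounds the supremum in \eqref{jp} by $d_{S\C}(E)$, and (ii), applied to the admissible vector $t=\tau$, shows that supremum is at least $d_{S\C}(E)$.

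For (i) I prove $\|t\|_{\B\otimes_{\max}\B}\le\gamma_\C(\hat t)\le d_{S\C}(E)\,\|\hat t\|_{cb}$. The second inequality is routine: for $\vp>0$ pick $w\colon E\to\C$ with $\|w\|_{cb}\le 1$ and $\|{w^{-1}}_{|w(E)}\|_{cb}\le d_{S\C}(E)+\vp$ (possible by definition of $d_{S\C}$), extend $\hat t\circ{w^{-1}}_{|w(E)}\colon w(E)\to\B$ to $b\colon\C\to\B$ using injectivity of $\B$, and note $\hat t=b\circ w$ with $\|w\|_{cb}\|b\|_{cb}\le(d_{S\C}(E)+\vp)\|\hat t\|_{cb}$. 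For the first inequality, fix a factorization $\hat t=b\circ a$ with $E\xrightarrow{a}\C\xrightarrow{b}\B$ and let $t_a\in E^*\otimes\C\subset\B\otimes\C$ be the tensor of $a$, so that $(Id_\B\otimes b)(t_a)=t$ in $\B\otimes\B$. By injectivity of $\otimes_{\min}$, $\|t_a\|_{\B\otimes_{\min}\C}=\|t_a\|_{E^*\otimes_{\min}\C}=\|a\|_{cb}$. By Kirchberg's theorem \cite{Kiuhf} the algebra $\C$ has the LLP, that is $\B\otimes_{\max}\C=\B\otimes_{\min}\C$; and since $\B=B(\ell_2)$ is injective we have $\|b\|_{dec}=\|b\|_{cb}$ (Haagerup), so $Id_\B\otimes b\colon\B\otimes_{\max}\C\to\B\otimes_{\max}\B$ has norm at most $\|b\|_{cb}$ (see \cite{P6}). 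Therefore $\|t\|_{\B\otimes_{\max}\B}\le\|b\|_{cb}\,\|t_a\|_{\B\otimes_{\max}\C}=\|b\|_{cb}\,\|t_a\|_{\B\otimes_{\min}\C}=\|b\|_{cb}\|a\|_{cb}$, and taking the infimum over such factorizations gives $\|t\|_{\B\otimes_{\max}\B}\le\gamma_\C(\hat t)$. This proves (i).

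Inequality (ii), equivalently $\|\tau\|_{\B\otimes_{\max}\B}\ge\gamma_\C(\iota_E)$, is the heart of the matter, and is where I expect the real work. One must convert the number $M:=\|\tau\|_{\B\otimes_{\max}\B}$ into the estimate $d_{S\C}(E)\le M$, that is, produce for each $\vp>0$ an embedding $w\colon E\to\C$ with $\|w\|_{cb}\le 1$ and $\|{w^{-1}}_{|w(E)}\|_{cb}\le M+\vp$. Via the universal property of $\C=C^*(\F_\infty)$ it suffices to find a complete contraction $v\colon E\to\C$ such that, for some representation $\pi$ of $\C$, the composition $\pi\circ v$ satisfies $\|(\pi v)^{-1}\|_{cb}\le M+\vp$ (this then forces the same bound for $v$, since $\pi$ restricts to a complete contraction on $v(E)$). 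My plan would be a duality argument dual to the construction in (i): the distortion $d_{S\C}(E)=\gamma_\C(\iota_E)$ is computed by testing $\iota_E$ against the functionals in the polar of the $\gamma_\C$-unit ball (the ``$\C$-integral'' functionals, which by a Hahn--Banach/Pietsch-type description factor, after passing to biduals, through $\C^{**}$), and one then shows that any such functional, evaluated at $\tau$, is dominated by a commuting pair of representations of $\B$, hence has modulus at most $\|\tau\|_{\B\otimes_{\max}\B}$. This last passage — promoting the commuting pair that realizes $\|\tau\|_{\max}$ to a factorization of $\iota_E$ through $\C$ of the required size — again invokes the LLP of $\C$ and the injectivity of $\B$, now in the opposite direction, and is the delicate point; it is precisely the content of \cite{JP}. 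Granting it, (i) and (ii) together yield \eqref{jp}.
\end{proof}
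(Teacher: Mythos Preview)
The paper does not prove this lemma; it is quoted verbatim from \cite{JP} and used as a black box, so there is no ``paper's own proof'' to compare against. Your write-up actually goes further than the paper does: your argument for (i), the inequality $\|t\|_{\B\otimes_{\max}\B}\le d_{S\C}(E)\,\|t\|_{\min}$, is correct and complete --- Kirchberg's identity $\B\otimes_{\min}\C=\B\otimes_{\max}\C$ together with $\|b\|_{dec}=\|b\|_{cb}$ for maps into the injective $\B$ is exactly the right tool, and the reduction of the right-hand side of \eqref{jp} to the single tensor $\tau$ is also sound.

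Part (ii), however, is not a proof but a promissory note. You correctly isolate the content as $\|\tau\|_{\B\otimes_{\max}\B}\ge d_{S\C}(E)$ --- i.e.\ a bound on the max norm of the identity tensor must be converted into an actual embedding of $E$ into $\C$ with controlled distortion --- but your ``plan'' (a Pietsch-type polar description of $\gamma_\C$, then promoting a commuting pair realizing $\|\tau\|_{\max}$ to a factorization) is left unexecuted, and at the crucial moment you write ``it is precisely the content of \cite{JP}. Granting it\ldots''. That is circular: the lemma you are trying to prove \emph{is} the cited result from \cite{JP}. The missing step is genuine work: one has to show that for $t\in E^*\otimes\B$ the max norm $\|t\|_{\B\otimes_{\max}\B}$ coincides with the $\C$-factorization norm $\gamma_\C(\hat t)$, which in \cite{JP} is done by analyzing the commuting representations that compute the max norm and exploiting the universal/lifting properties of $\C$ to manufacture the factorization. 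Without that argument, (ii) is a restatement of the goal, and the proposal establishes only the inequality $\ge$ in \eqref{jp}.
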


      \begin{rem} 
      By Theorem \ref{t4/2p}, $\ell_1$ (resp. $S_1$) embeds completely isometrically
      in ${\bb G}_{\cl E}$ when $\cl E=\cl E_{\max}$  (resp. when $\cl E=\cl E_1$).
      Since $\C$ (trivially) locally embeds in $\cl E_{S\C}$, it embeds completely isometrically
   in ${\bb G}_{\cl E}$ when $\cl E=   \cl E_{S\C} $. Morever, we have
   completely isometric embeddings
   $${\bb G}_{\cl E_{\max}}  \subset {\bb G}_{\cl E_1}  \subset  {\bb G}_{\cl E_{S\C}}  .$$
   These are strict inclusions since the three corresponding classes are distinct.
   Indeed,   for instance the column space $C_n$ is  in ${\cl E_1} \setminus\ovl{ {\cl {S E}_{\max} } }$
   (see \cite[Th. 10.5, p. 222]{P4}) for all $n>1$; moreover  $M_n$ and $\ell_\infty^n$ are in  ${ \cl { E}_{S\C} } \setminus\ovl{ \cl {S E}_{1} }$ for all $n>2$.
   The latter because if 
    an injective space $E$ is  in $ \ovl{ \cl {S E}_{1} }$
   its o.s. dual $E^*$ must be 1-exact, and this fails when $n>2$
 by \cite[Th. 21.5, p. 336]{P4}. Since $M_n$ and $\ell_\infty^n$ have the LP 
 (they are nuclear !) they are in ${ \cl { E}_{S\C} }$.

    \end{rem}
      
            The class $\cl E_{\max}$ is actually the smallest league containing $\CC$.
  We chose to denote it by $\cl E_{\max}$ because
the $d_{cb}$-closure of  $\cl E_{\max}$ is the class of f.d. maximal o.s.
      meaning f.d. spaces equipped with their maximal o.s. structures
      in the sense of Blecher-Paulsen in \cite{BP} (see also e.g. \cite[ \S 3]{P4}),
      that can be defined as follows:
            an o.s. $X$ is maximal if and only if $\|u\|_{cb}= \|u\|$
            for all $u: X \to B(H)$ and all $H$.
            The class of f.d. maximal o.s. reappears below  in Remarks \ref{rmax}
       and \ref{r18/3}     as the class $\cl E_{\max}^{[1]}$, among more general variations.
           
       We say that  $X$   is strongly maximal if, for any $\vp>0$,   any
           f.d. $E\subset X$ is contained in a larger f.d. $F\subset X$  
           such that $\forall H, \forall u: F \to B(H)\quad \|u\|_{cb}\le (1+\vp) \|u\|.$
            When $X$ is separable this means
       there is an increasing sequence of f.d. subspaces $X_n\subset X$
      with dense union such that  for some  $\vp_n\to 0$
       we have
       $$\forall H, \forall u: X_n \to B(H)\quad \|u\|_{cb}\le (1+\vp_n) \|u\|.$$
       
     \begin{rem}[Some important questions]\label{iq}  Clearly, strongly maximal implies maximal. The converse is 
      apparently open (even if we only  require  for $(\vp_n)$ to be a bounded sequence). We conjecture that the converse fails  
 for lack of a suitable approximation property in $X$.
 Actually it is unclear whether any maximal space locally embeds
 in $\cl E_{\max}$ (this was raised already in \cite{[O3]}). 
 Even whether it locally embeds in $\cl E_{S\C}$ is unclear to us.
 In fact we do not know if the class of f.d. subspaces of maximal o.s.
 (equivalently the class of all f.d. subspaces of $B(\ell_2)$
 equipped with its maximal o.s. structure)  is separable.
 A negative answer would be a very significant strengthening of 
 the non separability of $OS_{fd}$ proved in \cite{JP}.
 The failure of the approximation property for $B(\ell_2)$ (see \cite{Sz}) seems to play a role here
 behind the scene.
 These queries are related to some of Ozawa's questions in \cite{Ozllp} (see also \cite[\S 6]{[O3]}).
    \end{rem}

         \begin{cor}\label{dc1} There is a separable  o.s. ${\cl X}_{\max}\subset B(H)$    that is $\cl E_{\max}$-Gurarii, maximal
         and even strongly maximal, with the strong OLLP and 
         for which there is a (normal) projection $P: B(H)^{**} \to {\cl X}_{\max}^{**}$ with $\|P\|=1$.
      \end{cor}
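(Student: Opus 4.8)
The plan is to show that the space $\cl X_{\max}:={\bb G}_{\cl E_{\max}}$ does the job, where $\cl E_{\max}=\cup_n Q\ell_1^n$. First I would invoke the structure already in place: $\cl E_{\max}$ is a separable league (Remark~\ref{3/4}), hence tightly, and in particular loosely, Gurarii by Proposition~\ref{p12/3}, so Theorem~\ref{t4/2p} (applied with $Y=\{0\}$) produces a separable $\cl E_{\max}$-Gurarii o.s. of the form $X=\ovl{\cup_n E_n}$ with $E_n\subset E_{n+1}$ and $E_n\in\cl E_{\max}$ for all $n$; I then fix any completely isometric embedding $X\subset B(H)$. The strong OLLP of $X$ is immediate from Corollary~\ref{c1}: each $E_n$ is a quotient of some $\ell_1^{N(n)}$, and $\ell_1^N$ is in turn a complete quotient of $S_1^N$ (the diagonal inclusion $\ell_\infty^N\hookrightarrow M_N$ is completely isometric and completely contractively complemented, so its adjoint $S_1^N\to\ell_1^N$ is a complete metric surjection), whence each $E_n$ is completely isometric to a quotient of $S_1^{N(n)}$.

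Second, I would treat (strong) maximality. Since $\ell_1^N$ is maximal and a quotient of a maximal operator space is maximal (elementary: if $q\colon Z\to Z/N$ is the quotient map and $v\colon Z/N\to B(K)$, then $Id_{M_k}\otimes q$ is a metric surjection of $M_k(Z)$ onto $M_k(Z/N)$, so $\|Id_{M_k}\otimes v\|=\|Id_{M_k}\otimes(vq)\|=\|vq\|\le\|v\|$ whenever $Z$ is maximal), each $E_n$ is maximal. Consequently, for any $u\colon X\to B(K)$ we get $\|u\|_{cb}=\sup_n\|u_{|E_n}\|_{cb}=\sup_n\|u_{|E_n}\|\le\|u\|$, using that $\cup_n M_k(E_n)$ is norm-dense in $M_k(X)$ for every $k$; hence $X$ is maximal. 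For strong maximality I simply take, in the separable formulation, $X_n=E_n$ and $\vp_n=0$; to get the ``any f.d.\ subspace'' phrasing, given a f.d.\ $E\subset X$ and $\vp>0$ a perturbation of the identity on $X$ (Remark~\ref{pertu}) carries $E$ into some $E_n$, so $E\subset F:=w^{-1}(E_n)\subset X$ with $d_{cb}(F,E_n)\le1+\d(\vp)$, and maximality of $E_n$ then yields $\|u\|_{cb}\le(1+\d(\vp))\|u\|$ for all $u\colon F\to B(K)$.

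Third, for the norm-one projection: by Lemma~\ref{dl5} the $\cl E_{\max}$-Gurarii property makes $X$ $\cl E_{\max}$-injective, and since $\ell_1^n\in\cl E_{\max}$ this says exactly that for every $n$, every subspace $S\subset\ell_1^n$ and every $u\colon S\to X$ there is, for each $\vp>0$, an extension $\tilde u\colon\ell_1^n\to X$ with $\|\tilde u\|_{cb}\le(1+\vp)\|u\|_{cb}$. That is precisely condition~(iii) of Proposition~\ref{p1}, so Proposition~\ref{p1} produces a weak$^*$-to-weak$^*$ continuous (i.e.\ normal) projection $P\colon B(H)^{**}\to X^{**}$ onto $X^{**}$ with $\|P\|=1$. (One only gets $\|P\|=1$, not $\|P\|_{cb}=1$, since here $\ell_1^n$ rather than $S_1^n$ is the relevant test space; this is what distinguishes the present corollary from the WEP cases.)

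None of these steps is a genuine obstacle: the corollary is essentially a packaging of Theorem~\ref{t4/2p}, Corollary~\ref{c1}, Lemma~\ref{dl5} and Proposition~\ref{p1} for the specific league $\cl E_{\max}$. If I had to single out the one place requiring a small argument beyond bookkeeping, it is that maximality of the building blocks $E_n$ transfers to their closed union $X$ — the elementary density computation above — together with, if one insists on the ``any f.d.\ subspace'' phrasing of strong maximality rather than the increasing-sequence phrasing already handed to us by Theorem~\ref{t4/2p}, the perturbation step in the second paragraph.
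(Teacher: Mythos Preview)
Your proof is correct and follows essentially the same route as the paper's own proof: construct $\cl X_{\max}=\bb G_{\cl E_{\max}}$ via Proposition~\ref{p12/3} and Theorem~\ref{t4/2p}, read off strong maximality from the decomposition $\ovl{\cup E_n}$ with $E_n\in\cl E_{\max}$, and obtain the normal contractive projection from Proposition~\ref{p1}. You are simply more explicit where the paper is terse --- in particular you spell out (via Lemma~\ref{dl5}) why condition~(iii) of Proposition~\ref{p1} holds, and you justify the strong OLLP by noting that $\ell_1^N$ is a complete quotient of $S_1^N$, which the paper leaves implicit in Remark~\ref{3/4}.
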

      \begin{proof} By Proposition \ref{p12/3}
      we may apply Theorem \ref{t4/2p} to obtain our $\cl E_{\max}$-Gurarii space.
      Let ${\cl X}_{\max}={\bb G}_{\cl E}$ for $\cl E=\cl E_{\max}$.
      Since  ${\cl X}_{\max} =\ovl{\cup E_n}$ with $E_n$ f.d. and maximal strong maximality follows.
       The last assertion follows from  (ii)' in 
      Proposition \ref{p1}. 
       \end{proof}
    
 \begin{rem}
  One might be tempted to guess that the classical Gurarii space
 (which is a $\cl L_\infty$-space, see Remark \ref{bex'}) equipped with its maximal o.s. structure could have the 
 properties in Corollary \ref{dc1}, but it is not so.
Indeed, by Theorem \ref{t4/2p}   when $\cl E=\cl E_{\max}$
the space $\bb G_{\cl E}$ contains $\ell_1$  completely isometrically.
 But if a maximal o.s. $X$  
   contains $\ell_1$ completely isomorphically, then $X$ has quotients
   uniformly  isomorphic (in the Banach space sense) to $M_n$ ($n\ge 1$), 
   and hence cannot be isomorphic to a $\cl L_\infty$-space.
   Indeed, any  metric surjection $\ell_1\to M_n$ 
   is a fortiori a complete contraction, and by the injectivity of $M_n$
   it extends to a (complete) contraction from $X$ onto $M_n$,
   which is also a metric surjection.
   The fact that the spaces $\{M_n\mid n\ge 1\}$ are not uniformly isomorphic
   to quotients of $\cl L_\infty$-spaces, or equivalently that
   $\{M^*_n\mid n\ge 1\}$ does not uniformly embed in $\cl L_1$-spaces
   goes back to Gordon and Lewis \cite{GL} (see also \cite{PP}).
   The same argument shows that $\bb G_{\cl E_{\max}}$ cannot have local unconditional structure in the sense of \cite{GL}.
 \end{rem}

 \begin{rem} Friedman and Russo  \cite{FR} (see also \cite{FR2}) 
 proved that the range of a contractive projection $P$
 on a $C^*$-algebra is  a Jordan triple system
 for the triple product $(a,b,c)\mapsto P(ab^*c +cb^*a)/2$
 that 
  has a faithful representation as a $J^*$-algebra.
 A $J^*$-algebra is  closed subspace of $B(H,K)$
 stable by the mapping $x\mapsto xx^*x$.
 This applies to ${\cl X}_{\max}^{**}$.
  \end{rem}
 \begin{rem} 
 Note that, by
 Ozawa's Theorem \ref{op1}, a f.d.o.s. $E$ has the OLLP
 if and only if $E\in \ovl{\cl E_1}$.
 Since   $\ell_\infty^n\not\in \ovl{\cl E_1}$  when $n>2$ (see \cite[p. 336]{P4}) and $\ell_\infty^n$ is injective,
 an o.s. with OLLP cannot contain  $\ell_\infty^n$ completely isometrically.
 This shows that
 no infinite dimensional $C^*$-algebra has the OLLP. In particular
  the $\cl E$-Gurarii spaces  for $\cl E\subset \ovl{\cl E_1}$
 are not $C^*$-algebras.
 
 \end{rem}
        \begin{rem}\label{rmax}
        Fix an integer $N\ge 1$.
        Let $\cl E_{\max}^{[N]}$ be the class formed of all f.d. quotients
        of o.s. of the form $\ell_1 (S_1^N)$. 
        This is the class of f.d. $M_N$-maximal o.s.
        in the sense of Lehner \cite{Le}.
         See \S \ref{oik} for more details on $M_N$-spaces.\\
        Then $\cl E_{\max} \subset
        \cl E_{\max}^{[N]} \subset \ovl{\cl E_1}$.
        Moreover it is easy to check that
        $\ovl{\cl E_1}=\ovl{\cup_N  \cl E_{\max}^{[N]}}$,
        and the latter class coincides
        with that of f.d.o.s. $E$ such that $E^*$ is $1$-exact. This should be compared with Remark \ref{r18/3}.
         The space ${\bb G}_{\cl E_{\max}[N]} \subset B(H)$
        has the OLLP and is such that there is a projection 
        $P: B(H)^{**}\to {\bb G}_{\cl E_{\max}[N]}^{**}$ with $Id_{M_N} \otimes P$
        contractive. The proof is similar to that of Proposition \ref{p1w}.
         \end{rem}
 
       By Proposition \ref{p1w} whenever $\cl E\supset \cl E_1$ the space ${\bb G}_{\cl E}$
       is an ${\cl E}$-Gurarii space
      with the WEP. In particular:

      \begin{cor}\label{dc3} There is a separable o.s. ${\cl X}_1\subset B(H)$ with the strong OLLP that also has  the WEP. In particular 
         there is a (normal) projection $P: B(H)^{**} \to {\cl X}_1^{**}$ with $\|P\|_{cb}=1$.
    \end{cor}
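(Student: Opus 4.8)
The plan is to take $\cl E=\cl E_1=\cup_n QS_1^n$ and to set $\cl X_1={\bb G}_{\cl E_1}$, the $\cl E_1$-Gurarii space. First I would check that $\cl E_1$ meets the hypotheses of the constructions above. It is a league: stability under $\oplus_1$-direct sums and under o.s. quotients is immediate (and is recorded in Remark \ref{3/4}), so by Proposition \ref{p12/3} it is tightly Gurarii, in particular Gurarii. It is separable for $d_{cb}$: for each fixed $n$ the quotients of $S_1^n$ correspond to the subspaces of $M_n=(S_1^n)^*$, and these form a finite union over dimensions of compact Grassmannians on which $d_{cb}$ is continuous, so $QS_1^n$ is separable and hence so is the countable union $\cl E_1$. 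I would also note the trivial but crucial point that $S_1^n\in\cl E_1$ for every $n$, being the quotient of $S_1^n$ by $\{0\}$.

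Next I would invoke Theorem \ref{t4/2p} with $Y=\{0\}$: since $\cl E_1$ is a separable Gurarii class, it produces an $\cl E_1$-Gurarii o.s. of the form $\cl X_1=\ovl{\cup E_n}$, where $(E_n)$ is an increasing sequence of f.d. subspaces with $E_n\in\cl E_1$ for all $n$, i.e. each $E_n$ is completely isometric to a quotient of $S_1^{N(n)}$ for some finite $N(n)$; realize $\cl X_1\subset B(H)$ completely isometrically. The strong OLLP of $\cl X_1$ is then immediate from Corollary \ref{c1}: $\cl X_1$ is the closed increasing union of the f.d. subspaces $E_n$, each of which is completely isometric to a quotient of some $S_1^N$, which is exactly the situation covered by the last sentence of that corollary.

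For the WEP I would use that the $\cl E_1$-Gurarii property implies $\cl E_1$-injectivity (Lemma \ref{dl5}). Applying $\cl E_1$-injectivity with the member $E=S_1^n\in\cl E_1$: for every subspace $S\subset S_1^n$, every linear map $u:S\to\cl X_1$, and every $\vp>0$, there is an extension $\tilde u:S_1^n\to\cl X_1$ with $\|\tilde u\|_{cb}\le(1+\vp)\|u\|_{cb}$. This is precisely condition (iii) of Proposition \ref{p1w} (equivalently the criterion of Corollary \ref{cp1}), so $\cl X_1$ has the WEP. Finally, the WEP is condition (i) of Proposition \ref{p1w}, and its equivalent form (ii)$'$ furnishes a weak$^*$ to weak$^*$ continuous — i.e. normal — completely contractive projection $P:B(H)^{**}\to\cl X_1^{**}$ onto $\cl X_1^{**}$, which is the ``in particular'' clause.

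Since Theorem \ref{t4/2p} hands us the space, there is no substantial obstacle; the one point deserving attention is the passage from the $\cl E_1$-Gurarii property (which, as stated, only extends \emph{injective} maps out of subspaces of members of $\cl E_1$) to the extension property characterizing the WEP (which concerns \emph{arbitrary} linear maps out of subspaces of $S_1^n$). This gap is bridged exactly by Lemma \ref{dl5}, once one observes that $S_1^n$ itself lies in $\cl E_1$. A secondary point is that the \emph{strong} OLLP requires the representation $\cl X_1=\ovl{\cup E_n}$ with the $E_n$ genuinely \emph{in} $\cl E_1$, not merely with $d_{cb}(E_n,\cl E_1)\to 1$; this stronger conclusion of Theorem \ref{t4/2p} is available precisely because $\cl E_1$, being a league, is Gurarii rather than merely loosely Gurarii.
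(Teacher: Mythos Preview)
Your proposal is correct and follows essentially the same approach as the paper: set $\cl X_1=\bb G_{\cl E_1}$, use that $\cl E_1$ is a separable league (Remark \ref{3/4}, Proposition \ref{p12/3}) together with Theorem \ref{t4/2p} to realize $\cl X_1=\ovl{\cup E_n}$ with $E_n\in\cl E_1$, whence the strong OLLP via Corollary \ref{c1}, and obtain the WEP from the $\cl E_1$-Gurarii extension property via Proposition \ref{p1w}(iii). The paper's proof is a one-liner relying on the sentence immediately preceding the corollary and on Remark \ref{3/4}; you have simply unpacked these references, including the explicit appeal to Lemma \ref{dl5} to pass from injective to arbitrary maps.
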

     \begin{proof} 
     By Proposition \ref{p12/3}
      and Theorem \ref{t4/2p}, we may  just set  ${\cl X}_1=\bb G_{\cl E_1}$. 
        \end{proof}
 \begin{rem}\label{tro}  By a result due to Youngson \cite{Y} 
 the range of a completely contractive projection $P$ on a $C^*$-algebra $B$
 is (completely isometric to) a triple subsystem of another $C^*$-algebra $C$
 with  triple products given respectively by $[a,b,c]=P(ab^*c)$ ($a,b,c \in P(B)$)
 and $\{x,y,z\}= xy^*z$ ($x,y,z\in C$).
 Moreover,  for any $x,y,z \in B$ we have
 $$P(P(x)P(y)^* P(z))=P(xP(y)^* P(z))=P(P(x)y^* P(z))=P(P(x)P(y)^*z).$$
 This implies that $P(B)$ is completely isometric to a ternary ring of operator (TRO).
 By definition, a TRO is a subspace of $B(H,K)$ ($H$,$K$ Hilbert spaces)
 (or simply of a $C^*$-algebra) that is stable under the triple product $(x,y,z) \mapsto xy^*z$. Kirchberg in \cite{Kcar1} uses the term $C^*$-triple system.
 By \cite[Prop. 4.2]{Kcar1} (see also \cite{KR}) a TRO can be identified
 (as a TRO and completely isometrically) with $pA(1-p)$  where $A$ is its ``linking" $C^*$-algebra for some projection $p\in A$.
 In the weak* closed case $A$ is a von Neumann algebra.
 This applies in particular to the space ${\cl X}_1^{**}$ in Corollary \ref{dc3}.
 Perhaps the underlying linking von Neumann algebra deserves more
 investigation.
      We refer the reader to \cite{BLM} for more  results   on triple systems and TROs.
  \end{rem}

    \section{Uniqueness of $\cl E$-Gurarii spaces}\label{uniq}

  Recall that a linear map $f:E \to F$ between o.s.
     is called an $\vp$-embedding if it is injective
     and $$\max\{ \|f   \|_{cb} , \|{f^{-1} }_{|f(E)} \|_{cb} \} \le 1+\vp.$$
    
 To prove the uniqueness up to complete isometry of Theorem \ref{t5}
 we will use the same idea as in \cite{Kus}.

 \begin{dfn}\label{d23.12} We will say that a Gurarii class $\cl E$ is perturbative if 
 there is a function $\d: (0,\infty) \to (0,1) $ with $\lim\nl_{\vp\to 0} \d(\vp)=0$
 such that  for any $\vp>0$ the following holds:
 for any $L\in \cl E$, $S\subset L$,   $E\in \cl E$ 
   and any  $\vp$-isometric $u: S\to E$, 
 there is for  any $\d'>0$ a space $\hat E_1\in \cl E$,
  and $\d'$-isometric maps $\phi: E \to \hat E_1$ and  $\tilde u: L \to \hat E_1$ 
  such that
  $\|{\tilde u}_{|S} - \phi u\|_{cb}\le \d(\vp)$.
 \end{dfn}
  \begin{rem}\label{r12/3} Again it is easy to check that
  if $\cl E$ is perturbative then so are $\cl{S E}$ 
  and $\ovl{\cl{S E}}$.
  \end{rem}
 
 This   property expresses roughly that if the range is suitably enlarged (within $\cl E$)
 any  $\vp$-isometric $u: S\to E$ 
 is the restriction of a $\d(\vp)$-perturbation of    a map (namely $ \tilde u$)
 that is almost isometric. Since $ \tilde u$ is ``more isometric"
 than $u$, of course there has to be a compensation and $\d(\vp)$ has to be essentially larger than $\vp$,
 but still if $\d(\vp)\to 0$ when $\vp\to 0$, as we will see, this leads to some
 strong consequences.
 
       \begin{lem}\label{1} 
    If $\cl E$ is a league then for all $\vp>0$, 
     all $E,F\in \cl E$ and all $\vp$-isometric $f: E \to F$ 
    there is $\cl Z\in \cl E$ and completely isometric maps
    $i: E \to \cl Z$ and $j: F \to \cl Z$ such that
    $$ \|jf-i\|_{cb} \le \vp.$$
       \end{lem}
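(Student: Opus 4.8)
The plan is to build $\cl Z$ as a single $\ell_1$-quotient of $E$, $F$ and one extra copy of $E$, the extra copy being present solely to absorb the distortion of $f$. Explicitly, set
$$N:=\{(x,-f(x),\vp x)\ :\ x\in E\}\subset E\oplus_1 F\oplus_1 E,\qquad \cl Z:=(E\oplus_1 F\oplus_1 E)/N,$$
let $Q$ be the quotient map, and define $i(x)=Q(x,0,0)$ and $j(y)=Q(0,y,0)$. Since $\cl E$ is a league it is stable under $\oplus_1$ and under quotients, so $E\oplus_1 F\oplus_1 E\in\cl E$ and hence $\cl Z\in\cl E$. It then remains to see that $i$ and $j$ are completely isometric and that $\|jf-i\|_{cb}\le\vp$.

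First I would check the three assertions with ordinary norms in place of cb-norms; these are elementary infimal-convolution estimates using only $\|f\|\le\|f\|_{cb}\le1+\vp$ and $\|f(z)\|_F\ge(1+\vp)^{-1}\|z\|_E$. For $x\in E$ one has $\|i(x)\|_{\cl Z}=\inf_{z\in E}\bigl(\|x-z\|_E+\|f(z)\|_F+\vp\|z\|_E\bigr)$, and since $(1+\vp)^{-1}+\vp\ge1$ this infimum equals $\|x\|_E$, attained at $z=0$. For $y\in F$ one has $\|j(y)\|_{\cl Z}=\inf_{z\in E}\bigl(\|y+f(z)\|_F+(1+\vp)\|z\|_E\bigr)$, and the bound $\|f(z)\|_F\le(1+\vp)\|z\|_E$ together with the triangle inequality forces this to be $\|y\|_F$. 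Finally $(x,-f(x),0)=(x,-f(x),\vp x)-(0,0,\vp x)$ with $(x,-f(x),\vp x)\in N$, so $i(x)-jf(x)=-Q(0,0,\vp x)$ and therefore $\|i(x)-jf(x)\|_{\cl Z}\le\|(0,0,\vp x)\|_{\oplus_1}=\vp\|x\|_E$.

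To pass from norms to cb-norms I would use the vector-valued trace class exactly as in the proof of Lemma \ref{l1}. By the isometric identities $S_1[A\oplus_1 B]=S_1[A]\oplus_1 S_1[B]$ and $S_1[V/W]=S_1[V]/S_1[W]$ one identifies $S_1[\cl Z]$ with $(S_1[E]\oplus_1 S_1[F]\oplus_1 S_1[E])/N'$, where $N'=\{(\xi,-(Id\otimes f)\xi,\vp\xi):\xi\in S_1[E]\}$; that is, $S_1[\cl Z]$ is obtained by the Banach-space construction above applied to $Id\otimes f:S_1[E]\to S_1[F]$, which is again $\vp$-isometric since $\|Id\otimes f\|=\|f\|_{cb}$ and its inverse on the range has norm $\|{f^{-1}}_{|f(E)}\|_{cb}$. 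The three estimates of the preceding paragraph, read off inside $S_1[\cdot]$, say precisely that $Id\otimes i$ and $Id\otimes j$ are isometric and $\|Id\otimes(jf-i)\|\le\vp$, which is exactly the assertion that $i,j$ are completely isometric and $\|jf-i\|_{cb}\le\vp$.

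The one real idea is the choice of amalgam: quotienting $E\oplus_1 F$ by the graph $\{(x,-f(x))\}$ alone identifies $i(x)$ with $jf(x)$ exactly but degrades both $i$ and $j$ to mere $\vp$-embeddings, because the distortion of $f$ has nowhere to go; the summand $\vp x$ is precisely the room for it, and one pays by getting $\vp$-closeness instead of exact identification. The scalar coefficient must be $\ge\vp$ (otherwise $j$ is not isometric) and $\ge1-(1+\vp)^{-1}$ (otherwise $i$ is not isometric), and taking it to equal $\vp$ is what makes the final bound come out as $\vp$ on the nose. Beyond finding this construction I do not expect a genuine obstacle; the only routine point needing care is verifying, in the $S_1[\cdot]$ step, that $S_1[N]$ really is the graph-type subspace $N'$ and nothing larger, which follows from the naturality of the $S_1[\cdot]$-identifications already invoked in Lemma \ref{l1}.
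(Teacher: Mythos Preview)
Your proof is correct and follows essentially the same approach as the paper's: build $\cl Z$ as a quotient of a three-fold $\ell_1$-sum $E\oplus_1 F\oplus_1 E$ by a graph-type subspace, verify the Banach-space inequalities by hand, and then pass to cb-norms via $S_1[\cdot]$. The only cosmetic difference is that the paper puts the weight $\vp$ into the norm (using $\|(x,y,z)\|=\|x\|+\|y\|+\vp\|z\|$ and quotienting by $\{(-e,f(e),e)\}$), whereas you put it into the subspace (standard $\oplus_1$-norm, quotienting by $\{(x,-f(x),\vp x)\}$); these are completely isometric via $(a,b,c)\mapsto(a,b,\vp c)$, and your version has the minor advantage that membership of $\cl Z$ in the league is immediate from the definition without needing to observe that the weighted sum is still a league operation.
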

    \begin{proof} The proof is similar to the one given by Lupini
     in \cite[Lemma 3.1]{L0}.
     Consider $\cl Z=E\oplus F\oplus E$
    with norm
    $$\|(x,y,z)\|= \|x\|+\|y\|+\vp\|z\|.$$
    We will quotient this by
    $$N=\{ (-e,f(e),e ) \mid e\in E\}.$$
    Let $q: \cl Z \to \cl Z/N$ denote the quotient map. We set $Z= \cl Z/N$.
Then we set
$i(x)= q( (x,0,0) )$ ($x\in E$)
and $j(y)= q( (0,y,0) )$ ($y\in F$).
Note
$$ j(f(x))-i(x)= q( (-x,f(x), 0)  )= q( (-x,f(x), x)  )+ q( (0,0, -x)  )=q( (0,0, -x)  ) $$
and hence
$$\|jf-i\|\le \vp.$$
We have clearly $\|i\|\le 1$ and $\|j\|\le 1$.
Moreover,
for any $e\in E$
$$\|  (x,0,0)+ (-e,f(e),e )\|=\|x-e\|+\|f(e)\|+\vp\|e\|\ge\|x\|-\|e\|+\|f(e)\|+\vp\|e\|$$
$$ \ge \|x\|-\|e\| +\|e\|(1+\vp)^{-1}+\vp\|e\| \ge \|x\|,$$
and hence $\|i(x)\|\ge \|x\|$.
Similarly
$$\|  (0,y,0)+ (-e,f(e),e )\|=\|-e\|+\|y+f(e)\|+\vp\|e\|\ge\|e\|+\|y\|- \|f(e)\|+\vp\|e\| \ge \|y\|,$$
and hence $\|j(y)\|\ge \|y\|$.
This gives us isometries $i,j$ and  Lemma \ref{1} with the usual
norms instead of cb-ones. To pass to cb-norms
we use as earlier the identity
$\|f\|_{cb}= \|Id_{S_1} \otimes f: S_1[E] \to S_1[F]\|$ valid for any $f: E \to F$.
We equip $\cl Z$ with the o.s.s. of the direct sum
in the sense of $\ell_1$ (with the third factor weighted by $\vp$) so that $S_1[\cl Z]\simeq S_1[E]\oplus S_1[F]\oplus S_1[E]$
and for any $(x,y,z) \in S_1[E]\oplus S_1[F]\oplus S_1[E]$
we have
$$\|(x,y,z)\|_{S_1[\cl Z]}= \|x\|_{S_1[E]}+\|y\|_{S_1[F]}+\vp\|z\|_{S_1[E]}.$$
Moreover, 
$${S_1[N]}\simeq \{ (-e,( Id_{S_1} \otimes f)e,e )\mid e\in S_1[E]\}.$$
Thus if we run the preceding argument with $f$ replaced by $Id_{S_1} \otimes f: S_1[E] \to S_1[F]$ and
$\cl Z$ replaced by $S_1[\cl Z]$ we obtain the announced statement with cb-norms and with $\d(\vp)=\vp$.
    \end{proof}
      \begin{lem}\label{23} 
    If $\cl E$ is a league then it is a perturbative Gurarii class.    \end{lem}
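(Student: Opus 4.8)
The plan is to verify that a league $\cl E$ satisfies the definition of a perturbative Gurarii class (Definition \ref{d23.12}), with the perturbation function $\d(\vp)=\vp$ coming from Lemma \ref{1}. We already know from Proposition \ref{p12/3} that a league is tightly Gurarii, so the only thing to check is the perturbative property: given $L\in \cl E$, a subspace $S\subset L$, a space $E\in \cl E$, and an $\vp$-isometric map $u: S\to E$, we must produce (for any $\d'>0$) a space $\hat E_1\in \cl E$ with $\d'$-isometric maps $\phi: E\to \hat E_1$ and $\tilde u: L\to \hat E_1$ satisfying $\|\tilde u_{|S}-\phi u\|_{cb}\le \d(\vp)=\vp$.

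First I would apply Lemma \ref{1} to the $\vp$-isometric map $u: S\to E$ (note $S\in \cl{SE}$, so by Remark \ref{r12/3} the lemma's conclusion — being really a statement about quotients of $\ell_1$-sums, which stay in $\cl E$ — applies, or one directly reruns its proof with $S$ in place of $E$): this gives a space $\cl Z\in \cl E$ together with completely isometric embeddings $i: S\to \cl Z$ and $j: E\to \cl Z$ such that $\|ji-u'\|_{cb}\le\vp$ where I am writing $u'$ loosely — more precisely $\|j u - i\|_{cb}\le \vp$ with $i: S\to \cl Z$, $j: E\to \cl Z$ completely isometric. Now $i: S\to \cl Z$ is a completely isometric embedding with $S\subset L$ and $L\in \cl E$, $\cl Z\in \cl E$, so by the tightly Gurarii property of the league (Proposition \ref{p12/3}) the map $i$ extends: there is $\hat E_1\in \cl E$, a completely isometric embedding $\psi: \cl Z\to \hat E_1$ and a map $\tilde\imath: L\to \hat E_1$ with $\tilde\imath_{|S}=\psi i$ and $D(\tilde\imath)=D(i)=1$, i.e. $\tilde\imath$ is completely isometric. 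Set $\tilde u=\tilde\imath: L\to \hat E_1$ and $\phi=\psi j: E\to \hat E_1$; both are completely isometric, so in particular $\d'$-isometric for every $\d'>0$. Then on $S$ we have $\tilde u_{|S}-\phi u=\psi i - \psi j u = \psi(i - ju)$, and since $\psi$ is a complete isometry $\|\tilde u_{|S}-\phi u\|_{cb}=\|i-ju\|_{cb}\le\vp$, which is exactly the required estimate with $\d(\vp)=\vp$.

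The only delicate point — and the one I'd be careful about — is the applicability of Lemma \ref{1} and the tightly-Gurarii extension to a mere \emph{subspace} $S$ of a space in $\cl E$ rather than to an element of $\cl E$ itself. Lemma \ref{1} is literally stated for $E,F\in\cl E$, but its proof only uses $\cl Z=E\oplus F\oplus E$ quotiented by a subspace, so when we feed it $S\subset L\in\cl E$ and $E\in\cl E$ we need the ambient $\ell_1$-sum to lie in $\cl E$: taking $L\oplus_1 E\oplus_1 L$ (with the third summand rescaled, which doesn't change the o.s. up to complete isometry) and quotienting by $N=\{(-s,f(s),s)\mid s\in S\}$ stays in $\cl E$ because $\cl E$ is a league, and the resulting $\cl Z$ still receives completely isometric copies of $S$ (via the first factor) and of $E$ (via the second), the relevant lower estimates going through verbatim since they only involve $s\in S$. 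Likewise the tightly-Gurarii extension of $i:S\to\cl Z$ is exactly the content of the league's Gurarii property with the pair $S\subset L$, $\cl Z\in\cl E$ in the role of $(S\subset L, E)$ in Definition \ref{d5/2}. Hence $\cl E$ is perturbative, and being already tightly Gurarii, it is a perturbative Gurarii class, as claimed.
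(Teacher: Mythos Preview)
Your proof is correct and arrives at the same conclusion with the same function $\d(\vp)=\vp$, but you apply the two ingredients in the opposite order from the paper. The paper first uses the tightly Gurarii property (Proposition \ref{p12/3} together with Remark \ref{r11/3}) on $u:S\to E$ to produce $E_1\in\cl E$, a complete isometry $k:E\to E_1$ and an $\vp$-embedding $f:L\to E_1$ with $f_{|S}=ku$; it then applies Lemma \ref{1} to $f:L\to E_1$, where now both domain and codomain lie in $\cl E$, obtaining $\cl Z$, $i:L\to\cl Z$, $j:E_1\to\cl Z$ and setting $\hat E_1=\cl Z$, $\tilde u=i$, $\phi=jk$. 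You instead apply Lemma \ref{1} first (to $u:S\to E$) and the Gurarii extension second. The paper's ordering buys a cleaner invocation of Lemma \ref{1} as literally stated, with no need to enlarge the ambient $\ell_1$-sum from $S\oplus_1 E\oplus_1 S$ to $L\oplus_1 E\oplus_1 L$; your ordering is equally valid but forces you to rerun the proof of Lemma \ref{1} in that enlarged setting, which you do correctly. Both routes yield completely isometric $\phi$ and $\tilde u$ (so $\d'=0$ suffices) and the bound $\|\tilde u_{|S}-\phi u\|_{cb}\le\vp$.
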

    \begin{proof}
    Let $S\subset L$ and  $u:S \to E$ be as in Definition \ref{d23.12}.
    We know by Proposition \ref{p12/3}
    and 
    Remark \ref{r11/3}
    there is $E_1\in \cl E$, a complete isometry $k: E \to E_1$
    and an $\vp$-isometry $f: L \to E_1$
    such that ${f}_{|S} = ku$. Applying the preceding lemma
     we find $\cl Z\in \cl E$ with  $i: L \to \cl Z$
    and $j: E_1\to \cl Z$ such that 
    $\|j f - i\|_{cb}\le \vp$. Let $\hat E_1= \cl Z$, $\phi=jk$  and  $\tilde u=i$, so that
    $\phi u= jf_{|S}$.
    We obtain the property in Definition \ref{d23.12} with $\d'=0$ and $\d(\vp)=\vp$.
    \end{proof}
    There is a quite different sort of perturbative Gurarii class that we
    describe next.      
    \begin{lem}\label{18/3} Let $\cl E$ be a class of f.d.o.s. Assume that 
      \item{\rm (i)}   $\cl E$ is stable by $\oplus_{\infty}$,
      meaning that for any   $L,E\in \cl E$
       we have  $L \oplus_\infty E \in \cl E$.
       \item{\rm (ii)} 
      Each $E\in \cl E$ is injective (meaning that there is a
    completely isometric embedding $E\subset B(H)$ and a projection $P: B(H) \to E$ with $\|P\|_{cb}=1$).\\
    Then $\cl E$ is a perturbative tightly Gurarii class.
    \end{lem}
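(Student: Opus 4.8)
The plan is to prove the two halves separately, in parallel with Proposition~\ref{p12/3} and Lemma~\ref{23}: first that $\cl E$ is tightly Gurarii (which already yields ``Gurarii''), and then, using that, that it is perturbative.

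For tight Gurarii-ness I would start from $S\subset L$ and an injective $u:S\to E$ with $L,E\in\cl E$, and after rescaling assume $\|u\|_{cb}=1$, so that $c:=\|{u^{-1}}_{|u(S)}\|_{cb}=D(u)\ge 1$. Take $E_1=L\oplus_\infty E$, which lies in $\cl E$ by hypothesis~(i). Using the injectivity~(ii) of $E$, extend $u$ to $w:L\to E$ with $\|w\|_{cb}=1$; using the injectivity of $L$, extend ${u^{-1}}_{|u(S)}:u(S)\to L$ to $R:E\to L$ with $\|R\|_{cb}=c$. Now set $\phi:E\to E_1$, $\phi(e)=(c^{-1}R(e),e)$, and $\tilde u:L\to E_1$, $\tilde u(x)=(c^{-1}x,w(x))$. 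Since $\|c^{-1}R\|_{cb}\le 1$ and the $E$-coordinate of $\phi$ is the identity, $\phi$ is completely isometric; since $c\ge 1$ and $\|w\|_{cb}=1$ one gets $\|\tilde u\|_{cb}\le 1$, while the coordinate $x\mapsto c^{-1}x$ forces $\|{\tilde u^{-1}}_{|\tilde u(L)}\|_{cb}\le c$, whence $D(\tilde u)\le c=D(u)$. Finally ${\tilde u}_{|S}=\phi u$, because ${w}_{|S}=u$ and $R$ agrees with $u^{-1}$ on $u(S)$, so for $s\in S$ both sides equal $(c^{-1}s,u(s))$. This is exactly Definition~\ref{d5/2} with $\delta=0$.

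For perturbativity the key ingredient is the analogue of Lemma~\ref{1} obtained by trading the $\oplus_1$-quotient there for an $\oplus_\infty$-sum and the use of quotient-stability for the use of injectivity. Given $E,F\in\cl E$ and an $\vp$-isometric $f:E\to F$, put $\cl Z=E\oplus_\infty F\in\cl E$, extend ${f^{-1}}_{|f(E)}$ to $h:F\to E$ with $\|h\|_{cb}\le 1+\vp$ (injectivity of $E$), and set $i(e)=(e,(1+\vp)^{-1}f(e))$ and $j(y)=((1+\vp)^{-1}h(y),y)$. Since $\|(1+\vp)^{-1}f\|_{cb}\le1$ and $\|(1+\vp)^{-1}h\|_{cb}\le1$, in each of $i,j$ the honest-identity coordinate realizes the norm, so $i,j$ are completely isometric; and because $h(f(e))=e$ one has $jf(e)-i(e)=(-\frac{\vp}{1+\vp}e,\ \frac{\vp}{1+\vp}f(e))$, of cb-norm at most $\vp$. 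From here the argument is verbatim that of Lemma~\ref{23}: given an $\vp$-isometric $u:S\to E$ with $S\subset L$ and $L,E\in\cl E$, the tight Gurarii property just established together with Remark~\ref{r11/3} yields $E_1\in\cl E$, a complete isometry $k:E\to E_1$ and an $\vp$-isometry $f:L\to E_1$ with ${f}_{|S}=ku$; applying the construction above to $f$ gives $\cl Z\in\cl E$ with completely isometric $i:L\to\cl Z$, $j:E_1\to\cl Z$ and $\|jf-i\|_{cb}\le\vp$; then $\hat E_1=\cl Z$, $\phi=jk$, $\tilde u=i$ satisfy Definition~\ref{d23.12} with $\delta(\vp)=\vp$ and even $\delta'=0$.

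I expect the one genuinely substantive point to be this Lemma~\ref{1}-analogue: $i$ and $j$ must be (completely) isometric while $jf$ stays within $O(\vp)$ of $i$, and a straight diagonal embedding fails precisely because $\|f\|_{cb},\|h\|_{cb}$ can exceed $1$. Damping the off-diagonal blocks by the factor $(1+\vp)^{-1}$, so that the true-identity coordinate always carries the norm, is what makes it go through; the $L\oplus_\infty E$ construction for tight Gurarii and the final assembly are then routine. One should also keep the degenerate cases $S=\{0\}$ or $L=\{0\}$ in mind, but they are covered by the same formulas.
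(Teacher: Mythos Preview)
Your proof is correct and follows essentially the same approach as the paper. For tight Gurarii the paper first passes through the general push-out $E_1$ of Lemma~\ref{l1} and then maps it into $L\oplus_\infty E$ via extensions obtained from injectivity, whereas you construct the maps into $L\oplus_\infty E$ directly; the ingredients (extending $u$ using injectivity of $E$, extending $u^{-1}$ using injectivity of $L$, and reading off the distortion from the coordinates) are identical, and your version is slightly more streamlined. For perturbativity the paper merely says ``the same argument can be repeated''; your $\oplus_\infty$/injectivity analogue of Lemma~\ref{1} followed by the Lemma~\ref{23} scheme is exactly the natural way to unpack that sentence, and yields the same $\delta(\vp)=\vp$, $\delta'=0$.
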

    \begin{proof}  Let us first show that $\cl E$ is tightly Gurarii.
 Consider $L,E\in \cl E$ and $u:S \to E$ injective as in Definiton \ref{d5/2}.
 By Lemma \ref{l1} there is a f.d.o.s. $E_1 $, $\tilde u: L \to E_1$
 and a complete isometry $\phi: E \to E_1$ such that  ${\tilde u}_{|S}= \phi u$
 and $D({\tilde u}) = D(u)$. 
 We claim that there is a space $\hat E_1\in \cl E$ and 
 an injective map $\psi : E_1 \to \hat E_1$ 
 with $\| \psi\|_{cb} =1$
 such that the maps $\hat u: L \to \hat E_1$ and $\hat \phi: E \to \hat E_1$
 defined by $\hat u=\psi \tilde u$ and $\hat \phi=\psi\phi$ 
 (and hence ${\hat u}_{|S}= \hat\phi u$) satisfy the requirements to show that $\cl E$ is tightly Gurarii.
 
 The space $\hat E_1$ is defined simply as  $\hat E_1=L\oplus_\infty E$.
 The map $\psi : E_1 \to \hat E_1$ is defined by setting
 $\psi (x)= \|w\|_{cb}^{-1}w(x)\oplus \|v\|_{cb}^{-1} v(x)$ where $w$ and $v$ are as follows.
 Consider ${\tilde u}^{-1} : {\tilde u}(L) \to L $. Since $L$ is injective
 the latter map admits an extension 
  $w: E_1 \to L$  with $\|w\|_{cb}= \| { {\tilde u}^{-1} }_{|{\tilde u}(L)}  \|_{cb}$.
  Similarly, since $E$ is injective  the map ${\phi}^{-1} : {\phi}(E) \to E $  admits an extension 
  $v: E_1 \to E$  with $\|v\|_{cb}= 1$.
  Note that $\hat u(y)= \psi \tilde u(y)= (\|w\|_{cb}^{-1}y, \ast)$ ($y\in L$)
and $\hat \phi(e)=\psi \phi (e)=( \ast , e)$ ($e\in E$). So we have
  $$\|  {\hat u^{-1}}_{|\hat u(L) } : \hat u(L) \to L\|_{cb}  \le \|w\|_{cb} = \| { {\tilde u}^{-1} }_{|{\tilde u}(L)}  \|_{cb},$$ 
  and hence  
 $$ D(\hat u )  \le  D(\tilde u )  = D(u).$$
Moreover $\hat \phi$ is completely isometric.
  By our assumptions on $\cl E$ we know that $\hat E_1\in \cl E$.
  Thus we conclude that $\cl E$ is tightly Gurarii.
  The same argument can be repeated to show that 
  $\cl E$ is perturbative.
    \end{proof}

  \begin{lem}\label{L2}
  Assume that $\cl E$ is perturbative with associated function $\vp\mapsto \d(\vp)$.
   Let $f: E \to F$
    be a complete $\vp$-embedding ($\vp>0$) with $E,F\in \cl E$.
    Assume that $E$ is a f.d. subspace of an $\cl E$-Gurarii space $X$.
    Then for any $0<\vp'<1$ there is
    a subspace $E_1\subset X$ containing $E$ and a complete $\vp'$-embedding
    $g: F \to E_1$ such that  
       $$\|(gf-Id_{E_1})_{|E}\|_{cb} \le 2 (1+\vp)\d( \vp)  .$$    \end{lem}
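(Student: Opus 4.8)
The plan is to combine the perturbative property of $\cl E$ (available by hypothesis, with its function $\vp\mapsto\d(\vp)$), applied to the inverse of $f$, with the $\cl E$-Gurarii property of $X$ applied ``in reverse'' --- i.e.\ used to extend the resulting almost-isometry \emph{backwards} into $X$. I would carry along two auxiliary small parameters $\d'>0,\vp''>0$, fixed only at the end. First put $S=f(E)\subset F$ and $u=f^{-1}_{|f(E)}\colon S\to E$; since $f$ is a complete $\vp$-embedding, $\|u\|_{cb}\le 1+\vp$ and $\|u^{-1}_{|u(S)}\|_{cb}=\|f\|_{cb}\le 1+\vp$, so $u$ is $\vp$-isometric. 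Applying the perturbative property (Definition \ref{d23.12}) with $L=F\in\cl E$, this $S\subset L$, this $u\colon S\to E$ and the parameter $\d'$, I obtain $\hat E_1\in\cl E$ and $\d'$-isometric maps $\phi\colon E\to\hat E_1$ and $\tilde u\colon F\to\hat E_1$ with $\|\tilde u_{|S}-\phi u\|_{cb}\le\d(\vp)$. Precomposing with $f\colon E\to S$ and using $uf=Id_E$ yields $\|\tilde uf-\phi\|_{cb}=\|(\tilde u_{|S}-\phi u)f\|_{cb}\le(1+\vp)\,\d(\vp)$ as maps $E\to\hat E_1$; this is the only quantitative input.

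Next I would transport $\hat E_1$ into $X$. As $\phi$ is $\d'$-isometric it is injective, so $\phi^{-1}_{|\phi(E)}\colon\phi(E)\to E\subset X$ is an injective linear map on the subspace $\phi(E)$ of $\hat E_1\in\cl E$. Applying the $\cl E$-Gurarii property of $X$ (Definition \ref{d1}) with $L=\hat E_1$, $S=\phi(E)$ and tolerance $\vp''$, I get an injective $h\colon\hat E_1\to X$ extending $\phi^{-1}_{|\phi(E)}$ --- equivalently $h\phi=Id_E$, the inclusion $E\subset X$ --- with $D(h)\le(1+\vp'')D(\phi^{-1}_{|\phi(E)})=(1+\vp'')D(\phi)$. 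Because $h$ extends $\phi^{-1}_{|\phi(E)}$, the elementary observation of Remark \ref{r11/3} upgrades this to $D'(h)\le(1+\vp'')D'(\phi)\le(1+\vp'')(1+\d')$; in particular $\|h\|_{cb}\le(1+\vp'')(1+\d')$. I then set $E_1=h(\hat E_1)\subset X$, which contains $E=h(\phi(E))$, and $g=h\tilde u\colon F\to E_1$.

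It remains to verify the conclusions and choose the parameters. For the distortion, $D'(g)\le D'(h)D'(\tilde u)\le(1+\vp'')(1+\d')^2$, which is $\le 1+\vp'$ once $\vp''$ and $\d'$ are small enough, so $g$ is a complete $\vp'$-embedding into $E_1$. For the approximation, for $x\in E$ one has $gf(x)-x=h\tilde uf(x)-h\phi(x)=h(\tilde uf(x)-\phi(x))$, hence $\|(gf-Id_{E_1})_{|E}\|_{cb}\le\|h\|_{cb}\,\|\tilde uf-\phi\|_{cb}\le(1+\vp'')(1+\d')(1+\vp)\,\d(\vp)$, and since $\vp',\d'<1$ the prefactor $(1+\vp'')(1+\d')$ can be kept below $2$, giving the claimed bound $2(1+\vp)\,\d(\vp)$.

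There is no serious obstacle: the single idea to be found is to invoke the Gurarii property of $X$ \emph{backwards}, extending $\phi^{-1}$ rather than $\phi$, so that $\hat E_1$ is realised inside $X$ with $E$ already in its correct position; the rest is routine bookkeeping with $D$ versus $D'$ (using Remark \ref{r11/3} to convert a product bound on $D(h)$ into a two-sided bound on $D'(h)$), and the harmless loss of the constant $2$ comes precisely from $\|h\|_{cb}$ being only close to $1$ rather than equal to it.
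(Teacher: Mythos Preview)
Your proof is correct and follows essentially the same route as the paper's: apply the perturbative property to $u=f^{-1}_{|f(E)}$ to get $\hat E_1$, $\phi$, $\tilde u$, then use the $\cl E$-Gurarii property of $X$ to extend $\phi^{-1}_{|\phi(E)}$ to a map $h:\hat E_1\to X$ (the paper calls it $w$) and set $g=h\tilde u$. The only cosmetic differences are that you carry two separate small parameters $\d',\vp''$ where the paper uses a single one, and you take $E_1=h(\hat E_1)$ whereas the paper takes any f.d.\ subspace containing $g(F)+E$; your choice is a valid instance of the latter.
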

    
\begin{proof} 
Let $i_E:E \to X$ denote the inclusion.
We apply  the perturbative  Gurarii property with $ L=F$, $S=f(E)\subset L$
and $u={f^{-1}}_{|f(E)}$.
 For any $\vp''>0$ this gives us $\hat E_1\in \cl E$, and $\vp''$-isometric maps
$\phi: E \to \hat E_1$ and  $\tilde u: F \to \hat E_1$ such that
$\|{\tilde u}_{|S} - \phi u\|\le \d(\vp)$.
By the $\cl E$-Gurarii property of $X$
  applied to $\phi^{-1}: \phi(E) \to E$, there is
  a map $w: \hat E_1 \to X$
  such that   $w_{|\phi(E)} =i_E \phi^{-1}$ or equivalently $w\phi=i_E    $   and (recall \eqref{21.12'})
   \begin{equation}\label{e10/3} 
   D'(w) \le
    (1+\vp'') D'(\phi)  \le  (1+\vp'')^2.\end{equation}
  We then set $g= w\tilde u: F \to X$. 
  We have  $
  D'(g)  \le (1+\vp'')^3$ and 
  $$gf-i_E = w {\tilde u}_{|S} f  -  i_E f^{-1} f  = (w {\tilde u}_{|S}    -  i_E u) f  
  = (w {\tilde u}_{|S}    - w\phi u) f 
   $$ and hence  by \eqref{e10/3}
  $$\|(gf-i_E) \|_{cb} \le \|w \|_{cb}  \|{\tilde u}_{|S} - \phi u\|_{cb}  \|f \|_{cb}     \le   (1+\vp'')^2 \d(\vp) (1+\vp).$$
  Let  $E_1\subset X$ be any f.d. subspace
  such that $g(F) + E \subset E_1$, we may then view $g$ and $i_E$ as 
  having range in $E_1$ and it only
 remains to choose $\vp''$ small enough with respect to $\vp'$ to obtain the announced result.
\end{proof}

Mimicking the approach of \cite{Kus} we will prove

    \begin{thm}\label{T1} 
    Assuming that $\cl E$ is a perturbative   Gurarii class with associated function $\vp\mapsto \d(\vp)$. Let $X,Y$ be $\cl E$-Gurarii spaces, let $\vp>0$, 
      and let $E\subset X$ be a f.d. subspace in $\cl E$.
    Let $u : E \to Y$ be a complete $\vp$-embedding.   
    Then for any $\d'>0$ there
    is a completely isometric isomorphism  $f: X \to Y$
    such that $\|f_{|E} -u\|_{cb} \le 8\d(\vp) +\d'$.\\
    In particular, $X$ and $Y$ are completely isometrically isomorphic.
      \end{thm}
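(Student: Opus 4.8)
The plan is to run a back-and-forth argument in the style of Kubi\'s and Solecki: one alternately builds up finite-dimensional pieces of $X$ and of $Y$ together with almost isometric, almost mutually inverse maps between them, and then passes to the limit. Since $X$ and $Y$ are $\cl E$-Gurarii spaces they are separable and, as they locally embed in $\cl E$, every finite-dimensional subspace of either lies in $\ovl{\cl {SE}}$; moreover $\ovl{\cl {SE}}$ is again a perturbative Gurarii class (Remark~\ref{r12/3}) and the $\cl E$- and $\ovl{\cl {SE}}$-Gurarii properties coincide (Remark~\ref{dr2}), so Lemma~\ref{L2} is available, with $\cl E$ replaced by $\ovl{\cl {SE}}$, for all the finite-dimensional subspaces that occur. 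Fix dense sequences $(x_k)$ in $X$ and $(y_k)$ in $Y$; at odd stages we absorb a new $x_k$ into the domain, at even stages a new $y_k$ into the target, so that the two resulting increasing chains of finite-dimensional subspaces are dense.

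First I would set $E_0=E$, $u_0=u$, and iterate Lemma~\ref{L2}. Given a finite-dimensional $E_n\subset X$ and a complete $\vp_n$-embedding $u_n\colon E_n\to Y$ with image inside a finite-dimensional $F_n\subset Y$ (chosen to contain $y_n$), apply Lemma~\ref{L2} to $u_n\colon E_n\to F_n$, viewing $E_n$ as a subspace of the Gurarii space $X$: this yields $E_{n+1}\subset X$ with $E_n\subset E_{n+1}$ (enlarged to contain $x_{n+1}$) and a complete $\vp'_{n+1}$-embedding $v_n\colon F_n\to E_{n+1}$, with $\vp'_{n+1}$ as small as we please, such that $\|(v_nu_n-Id)_{|E_n}\|_{cb}\le 2(1+\vp_n)\d(\vp_n)$. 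Then apply Lemma~\ref{L2} again to $v_n\colon F_n\to E_{n+1}$, this time with $F_n$ a subspace of the Gurarii space $Y$, to obtain $F_{n+1}\subset Y$ with $F_n\subset F_{n+1}$ (enlarged to contain $y_{n+1}$) and a complete $\vp_{n+1}$-embedding $u_{n+1}\colon E_{n+1}\to F_{n+1}$, with $\vp_{n+1}$ as small as we please, such that $\|(u_{n+1}v_n-Id)_{|F_n}\|_{cb}\le 2(1+\vp'_{n+1})\d(\vp'_{n+1})$. The crucial point --- and the reason the induction closes --- is that the distortion of each newly produced embedding may be taken arbitrarily small, while the mismatch error depends only on the distortion of the previous map; so, using $\d(t)\to0$ as $t\to0$, I would choose all the $\vp_n$ ($n\ge1$) and all the $\vp'_n$ so small that $\sum_n\vp_n$, $\sum_n\vp'_n$, $\sum_n\d(\vp_n)$ and $\sum_n\d(\vp'_n)$ are finite and that every term below except the single term $2(1+\vp)\d(\vp)$ coming from $n=0$ is as small as desired.

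Next I would pass to the limit. A short computation gives the telescoping bound $\|(u_{n+1}-u_n)_{|E_n}\|_{cb}\le (1+\vp_{n+1})\,2(1+\vp_n)\d(\vp_n)+(1+\vp_n)\,2(1+\vp'_{n+1})\d(\vp'_{n+1})$ and its analogue for the $v_n$'s, so (cf. the mechanism in Remark~\ref{ind}) the $u_n$'s define a complete contraction $f\colon X=\ovl{\cup E_n}\to Y$ and the $v_n$'s a complete contraction $g\colon Y=\ovl{\cup F_n}\to X$, using $\vp_n,\vp'_n\to0$. Since $v_nu_n\to Id$ on each $E_m$ and $u_{n+1}v_n\to Id$ on each $F_m$, a routine approximation argument yields $gf=Id_X$ and $fg=Id_Y$, so $f$ is a completely isometric isomorphism. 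Restricting the telescoping sum to $E=E_0$, every term after the first is $\le\d'$ by the choices above, while the first contributes at most $(1+\vp_1)\,2(1+\vp)\d(\vp)$, which is $\le 8\d(\vp)$ when $\vp,\vp_1\le1$ (the regime of substance for the statement); hence $\|f_{|E}-u\|_{cb}\le 8\d(\vp)+\d'$. The last assertion is simply the case where one discards this bound on $f_{|E}$.

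I expect the principal difficulty to be the bookkeeping: organising the two interleaved sequences of tolerances $(\vp_n),(\vp'_n)$ so that all four relevant series converge, so that $f$ and $g$ are simultaneously well defined, completely contractive and mutually inverse, and so that the alternation genuinely exhausts both $X$ and $Y$; there is also the routine but necessary check that Lemma~\ref{L2} may legitimately be applied with $\cl E$ replaced by $\ovl{\cl {SE}}$ at every step. The conceptual heart of the matter is already contained in Lemma~\ref{L2}: the perturbative Gurarii property is exactly what allows the ``inverse direction'' of an almost isometry to be realised inside the Gurarii space with negligible distortion and only a $\d(\vp)$-sized defect in the composition, and it is this asymmetry --- arbitrarily small distortion, but a fixed-size composition defect --- that both drives the construction and fixes the final constant.
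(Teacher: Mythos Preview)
Your proposal is correct and follows essentially the same back-and-forth argument as the paper: alternate applications of Lemma~\ref{L2} produce the sequences $(E_n),(F_n)$ and the almost mutually inverse $\vp_n$-embeddings, and the telescoping estimate yields both the completely isometric isomorphism and the bound $8\d(\vp)+\d'$. Your explicit reduction to $\ovl{\cl{SE}}$ via Remarks~\ref{dr1}, \ref{dr2} and~\ref{r12/3} is a point the paper leaves implicit, and your bookkeeping with two sequences $(\vp_n),(\vp'_n)$ is an inessential variant of the paper's single sequence.
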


      \begin{proof} 
     The proof can be completed exactly as in \cite[p. 453]{Kus}
     but using our Lemma \ref{L2} in place of
     \cite[Lemma 2.2]{Kus}. Unfortunately, our notation
     clashes with that of \cite{Kus} so we   repeat the
     argument of \cite{Kus} for the reader's convenience.
     
     Fix $\vp_0=\vp$. Let $(\vp_n)_{n>0}$ be a  sequence of   numbers such that 
     $0<\vp_n<1$ for all $n>0$  and $\sum \d(\vp_n)<\infty$ (to be specified).
     By induction, following \cite{Kus} we construct sequences $(E_n)$ and $(F_n)$
     of f.d. subspaces respectively of $X$ and $Y$,
     together with maps $(f_n)$ and $(g_n)$
     satisfying the following conditions:
     \item{(0)} $E_0=E$, $F_0=u(E)$,
        \item{(1)} $f_n : E_n \to F_n$ is an $\vp_n$-embedding,  
           \item{(2)} $g_n : F_n \to E_{n+1}$ is an $\vp_{n+1}$-embedding,
              \item{(3)} $\|(g_nf_n -Id_{E_{n+1}})_{|E_n}\|_{cb}\le 4 \d(\vp_n)  \|x\|$ and $E_n\subset E_{n+1}$,
                 \item{(4)} $\|(f_{n+1} g_n -Id_{F_{n+1}})_{|F_n}\|_{cb}\le 4\d(\vp_{n+1}) \|y\|$ and $F_n\subset F_{n+1}$,
                    \item{(5)}  
                    $X=\ovl{\cup E_n}$, $Y=\ovl{\cup F_n}$.
                    
                    The proof can be illustrated by the following asymptotically approximately commuting
                    diagram.
                     $$\xymatrix{  X \ar[r]^{f}& Y\ar[r]^{g} & X
   \\E_{n+1}\ar@{-->}[r]^{f_{n+1}}\ar@{^{(}->}[u]  & F_{n+1} \ar@{^{(}->}[u]  \ar@{-->}[r]^{g_{n+1}} & E_{n+2}  \ar@{^{(}->}[u] \\
 E_n\ar@{-->}[r]^{f_n}\ar@{^{(}->}[u]& F_n \ar@{^{(}->}[u]\ar@{-->}[r]^{g_n} & E_{n+1}\ar@{^{(}->}[u] }$$ 
                    
                    We start with $E_0=E$, $F_0=u(E)$ and $f_0=u$.
                \\
                    Assume we know $  E_k,F_k,f_k$ for all $k\le n$.
                    Then by Lemma \ref{L2} applied to $f_n$
                    there is  $E_{n+1}$,  that we can pick so that $E_{n+1} \supset E_n$,  and an 
                    $\vp_{n+1}$-embedding
                    $g_{n}:  F_n \to E_{n+1}$ such that (3) holds.
                    With the latter map $g_n: F_n \to E_{n+1}$ at hand,  we apply Lemma \ref{L2} to  it                    (now with $Y$ in place of $X$), this gives us $F_{n+1}$,  that we can pick so that $F_{n+1} \supset F_n$, and 
                    an $\vp_{n+1}$-embedding
                    $f_{n+1} : E_{n+1} \to F_{n+1}$ such that (4) holds.
                    \\
                    In particular,  starting from $E_0,F_0,f_0$ we obtain $g_0$   as well as $E_1,F_1,f_1$,
                    and then $g_1$   as well as $E_2,F_2,f_2$ and so on.
                    The condition (5) can easily be ensured by enlarging if necessary
                    the space $E_n$ (resp. $F_n$) at the $n$-th step so that   it contains the first $n$ elements
                    of a dense sequence in $X$ (resp. $Y$).   
                 Then (3) implies
                    $\|(f_{n+1}g_nf_n -f_{n+1})_{|E_n}\|_{cb}\le 4\d(\vp_n) \|f_{n+1}\|_{cb} \le 8\d(\vp_n)  $  while (4)
                    implies
                    $\|f_{n+1}g_nf_n -f_{n}\|_{cb}\le 4\d(\vp_{n+1}) \|f_n\|_{cb}\le 8\d(\vp_{n+1}) $. Therefore by the triangle inequality
                     \begin{equation}\label{err1} 
                \|(f_{n+1} -f_{n})_{|E_n}\| \le       \|(f_{n+1} -f_{n})_{|E_n}\|_{cb}
                \le 8\d(\vp_n)   +8\d(\vp_{n+1})  .\end{equation}
                    Since $(\d(\vp_n))$ is summable, $(f_n(x))$ converges in $Y$
                    to a limit $f(x)$ for any $x\in \cup E_n$. Clearly $f$ extends to a complete isometry
                    from $X $ to $Y$.
                    Arguing similarly with $g_{n+1} f_{n+1} g_n$ we find that
                    $(g_n(y))$ converges in $X$
                    to a limit $g(y)$ for any $y\in \cup F_n$ that defines a complete isometry
                    $g: Y\to X$. Clearly (3), (4) and (5) imply that
                    $fg=Id_Y$ and $gf=Id_X$, whence the conclusion
                    that $f$ is a bijective complete isometry.                    
                      Since $f_0=u$ and $E=E_0\subset E_n$ the bound \eqref{err1} gives us
                      $$\|f_{|E} -u\|_{cb} \le \sum\nl_0^\infty \|(f_{n+1} -f_{n})_{|E_n}\|_{cb}
                    \le \sum\nl_0^\infty8\d(\vp_n)   +8\d(\vp_{n+1}) =
                    8\d(\vp)+16\sum\nl_1^\infty \d(\vp_n),$$
                    so that by a suitable choice of  $(\vp_n)_{n>0}$ we can ensure that
                    $\|f_{|E} -u\|_{cb} \le 8\d(\vp) +\d'$.
   \end{proof}
    
      Applying this for an arbitrary choice of $E$,  we state for emphasis:
      
    \begin{cor}[Uniqueness]\label{cd1}  If $\cl E$ is  a perturbative  Gurarii class, any two $\cl E$-Gurarii spaces are
    completely isometrically isomorphic.
    \end{cor}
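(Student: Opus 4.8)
The plan is to derive Corollary \ref{cd1} directly from Theorem \ref{T1}, which already contains all the analytic content: the back-and-forth construction of the isomorphism, fed at each step by Lemma \ref{L2}, which in turn exploits the perturbative property of $\cl E$. Theorem \ref{T1} says that whenever $X,Y$ are $\cl E$-Gurarii spaces, $E\subset X$ is a finite-dimensional subspace belonging to $\cl E$, and $u\colon E\to Y$ is a complete $\vp$-embedding, then $X$ and $Y$ are completely isometrically isomorphic. So all that remains for Corollary \ref{cd1} is to check that, inside any two $\cl E$-Gurarii spaces, \emph{some} admissible triple $(E,u,\vp)$ exists.

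First I would pass to the hereditary closure. By Remarks \ref{r12/3}, \ref{dr1} and \ref{dr2}, the class $\cl F=\ovl{\cl{SE}}$ is again a perturbative Gurarii class, it is $d_{cb}$-closed, and an operator space is $\cl E$-Gurarii if and only if it is $\cl F$-Gurarii (the $\cl E$-, $\cl{SE}$- and $\ovl{\cl{SE}}$-Gurarii properties all coincide, and "locally embeds in $\cl E$" is by definition a statement about $\ovl{\cl{SE}}$-membership). So it suffices to apply Theorem \ref{T1} to the class $\cl F$, and for that it is enough to produce a finite-dimensional $E\subset X$ with $E\in\cl F$.

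Next I would produce such an $E$. Since $\cl E\ne\{\{0\}\}$, fix a nonzero $L\in\cl E$; since $X$ is $\cl E$-Gurarii it is nonzero, so, choosing a norm-one $\xi\in L$ and a norm-one $y\in X$, the map $u_0\colon\CC\xi\to X$, $\lambda\xi\mapsto\lambda y$, is a complete isometry ($\CC\xi$ being completely isometric to $\CC$), hence $D'(u_0)=1$. Applying the $\cl E$-Gurarii property of $X$ with $S=\CC\xi\subset L$ extends $u_0$ to an injective $\tilde u\colon L\to X$ with $D'(\tilde u)\le 1+\vp$ (using \eqref{21.12'}); its image $E:=\tilde u(L)\subset X$ then satisfies $d_{cb}(E,L)\le 1+\vp$, so $E\in\ovl{\cl{SE}}=\cl F$. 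Running the identical bootstrap inside $Y$ — starting from a one-dimensional subspace of $E$ and using that $Y$ is $\cl F$-Gurarii — produces a complete $\vp$-embedding $u\colon E\to Y$. Now Theorem \ref{T1}, applied to the perturbative Gurarii class $\cl F$ with the data $(X,Y,E,u,\vp)$, yields a completely isometric isomorphism $f\colon X\to Y$.

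The only point that is not purely formal is the replacement of $\cl E$ by its hereditary closure $\cl F=\ovl{\cl{SE}}$: without it one would be forced to find an \emph{exact} completely isometric copy of a member of $\cl E$ sitting inside $X$, which a general perturbative Gurarii class (as opposed to a league) need not furnish, since it only guarantees $\vp$-universality. Everything else — nonemptiness of $X$, the one-dimensional bootstrap, and the bookkeeping between $\cl E$, $\cl{SE}$ and $\ovl{\cl{SE}}$ — is routine, and the substantive work has already been discharged in Theorem \ref{T1} and Lemma \ref{L2}.
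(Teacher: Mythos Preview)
Your proof is correct and matches the paper's approach: the corollary is simply the ``In particular'' clause of Theorem \ref{T1}, which the paper restates without further argument (``Applying this for an arbitrary choice of $E$, we state for emphasis''). Your passage to $\cl F=\ovl{\cl{SE}}$ is the right move to make the hypotheses of Theorem \ref{T1} verifiable; note however that the bootstrap you use to produce $E$ is unnecessary, since by Definition \ref{d2} an $\cl E$-Gurarii space locally embeds in $\cl E$, so \emph{every} f.d.\ subspace of $X$ already lies in $\ovl{\cl{SE}}$ --- in particular any one-dimensional $E\subset X$, sent isometrically onto a line in $Y$, already furnishes the seed $(E,u,\vp)$.
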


\begin{rem}[More examples]\label{mex}
Let $\cl E$ be a family of injective f.d.o.s.
stable by
$\oplus_\infty$-direct sums.  
By Lemma \ref{18/3}, $\cl E$ is perturbative Gurarii and hence there 
is a unique space $\bb G_\cl E$.\\
Let $Z$ be   a fixed injective f.d.o.s. 
 Let  $\cl E(Z)$ be   the class formed 
of all the $\oplus_\infty$-direct sums of finitely many copies of $Z$.
We can associate to $Z$ the space $\bb G_{\cl E(Z)}$.
Perhaps the cases of $Z=R_n$ (row $n\times n$-matrices) or $Z=C_n$ (column  $n\times n$-matrices) 
 or more generally $Z=M_{p,q}$ (rectangular matrices of size $p\times q$) deserve further investigations.\\
 We  discuss the particular case $Z=M_N$ in the next section.
\end{rem}

    \section{Oikhberg's exact Gurarii space}\label{oik}
    
    In \cite{Oi} Oikhberg proved the existence of an analogue of the Gurarii space among exact operator spaces. While the spaces of interest to us
    in this note are mainly non exact, we would like to indicate how the existence and uniqueness
    of the exact Gurarii space can be derived from Theorems \ref{t4/2p}
    and   \ref{T1}.
      Note that, using \cite{Kus},  Lupini proved in \cite{L0} the uniqueness
    of Oikhberg's space up to completely isometric isomorphism. Moreover, in \cite{L0,L1} (see also \cite{GoL}) Lupini placed the whole
    subject of Gurarii operator spaces in the much broader context of Fra\"\i ss\'e limits, which were connected to Gurarii spaces    by Ben Yaacov (see \cite[\S 3.3]{BY})  inspired by Henson's unpublished  work.  The more recent paper   \cite{FLAMT} studies Fra\"\i ss\'e limits for the class
    of  Banach spaces that are $L_p$-spaces or lattices.
    In this framework (with which we confess we are not too familiar)
    our paper probably just boils down to providing more examples illustrating the applicability of Fra\"\i ss\'e limits in operator space theory,
    beyond Oikhberg's exact space.

  Let $\cl E^{[N]}_{\min}$ be the collection of all f.d. operator 
subspaces of $\ell_\infty( M_N)$ (or equivalently of $\ell_\infty\otimes_{\min} M_N$), and let
$\cl E^{[\infty]}_{\min}=\ovl{\cup_N \cl E^{[N]}_{\min}}$.
The set $\cl E^{[\infty]}_{\min}$ coincides with the collection of all 
f.d. $1$-exact o.s.

\begin{lem}\label{21/3}
The classes $\cl E^{[N]}_{\min}$ and $\cl E^{[\infty]}_{\min}$ are perturbative Gurarii.
\end{lem}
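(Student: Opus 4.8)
The plan is to derive both assertions from the injectivity-based constructions behind Lemma \ref{18/3} and Lemma \ref{1}, carried out relative to the ambient algebra $\ell_\infty(M_N)=\ell_\infty\otimes_{\min}M_N$ rather than relative to the (generally non-injective) members of $\cl E^{[N]}_{\min}$ themselves. The two facts I would use are: (a) $\ell_\infty(M_N)$ is an injective von Neumann algebra (it is hyperfinite, being an $\ell_\infty$-direct sum of copies of $M_N$; alternatively use the pinching conditional expectation onto the block diagonal), and (b) it absorbs itself, $\ell_\infty(M_N)\oplus_\infty\ell_\infty(M_N)\cong\ell_\infty(M_N)$ completely isometrically, by the reindexing $\NN\sqcup\NN\cong\NN$. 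I also note that $\cl E^{[N]}_{\min}$ is hereditary and that every member embeds completely isometrically into $\ell_\infty(M_N)$.

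\emph{Tightly Gurarii.} Given $S\subset L$ and an injective $u:S\to E$ with $L,E\in\cl E^{[N]}_{\min}$ (we may assume $\|u\|_{cb}=1$), I would form the push-out $E_1=[L\oplus_1 E]/G_u$ of Lemma \ref{l1}, with $j:E\hookrightarrow E_1$ completely isometric, $\tilde u:L\to E_1$ extending $u$, $\|\tilde u\|_{cb}\le1$ and $\|{\tilde u}^{-1}_{|\tilde u(L)}\|_{cb}=\|u^{-1}_{|u(S)}\|_{cb}=:c$. Fix completely isometric embeddings of $L$ and of $E$ into copies $W,W'$ of $\ell_\infty(M_N)$; by injectivity of $W,W'$ extend ${\tilde u}^{-1}_{|\tilde u(L)}$ to $w:E_1\to W$ with $\|w\|_{cb}=c$ and $j^{-1}$ to $v:E_1\to W'$ with $\|v\|_{cb}=1$, and set $\psi:E_1\to W\oplus_\infty W'\cong\ell_\infty(M_N)$, $\psi(x)=(c^{-1}w(x),v(x))$. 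The very normalizations in the proof of Lemma \ref{18/3} then give that $\psi$ is completely contractive, $\psi j$ is completely isometric (its $W'$-component is the inclusion of $E$), $(\psi\tilde u)_{|S}=(\psi j)u$, and $D(\psi\tilde u)\le c=D(u)$. Replacing $\ell_\infty(M_N)$ by the finite dimensional subspace $\psi j(E)+\psi\tilde u(L)$, which again lies in $\cl E^{[N]}_{\min}$, yields Definition \ref{d5/2} with $\d=0$.

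\emph{Perturbative.} I would combine the above with an analogue of Lemma \ref{1}. Applying tightly Gurarii to a $\vp$-isometric $u:S\to E$ (with $S\subset L$, all in $\cl E^{[N]}_{\min}$) gives $E_1\in\cl E^{[N]}_{\min}$, a completely isometric $k:E\to E_1$, and $g:L\to E_1$ with $g_{|S}=ku$ and $D(g)\le D(u)$; since $g$ extends $u$, the trivial inequalities of Remark \ref{r11/3} force $\|g\|_{cb}=\|u\|_{cb}$ and $\|g^{-1}_{|g(L)}\|_{cb}=\|u^{-1}_{|u(S)}\|_{cb}$, so $g$ is again $\vp$-isometric. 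Next I would prove: for every $\vp$-isometric $f:A\to B$ with $A,B\in\cl E^{[N]}_{\min}$ there are $\cl Z\in\cl E^{[N]}_{\min}$ and completely isometric maps $i:A\to\cl Z$, $\beta:B\to\cl Z$ with $\|\beta f-i\|_{cb}\le\vp$; for this, embed $A,B$ completely isometrically into copies $W_A,W_B$ of $\ell_\infty(M_N)$, extend $f^{-1}_{|f(A)}$ to $G:B\to W_A$ with $\|G\|_{cb}\le1+\vp$, work inside $\cl Z_0=W_A\oplus_\infty W_B\cong\ell_\infty(M_N)$, and set $i(a)=(a,(1+\vp)^{-1}f(a))$, $\beta(b)=((1+\vp)^{-1}G(b),b)$; the factors $(1+\vp)^{-1}$ make $i,\beta$ completely isometric while $\beta f(a)-i(a)=\bigl(-\tfrac{\vp}{1+\vp}a,\tfrac{\vp}{1+\vp}f(a)\bigr)$ has cb-norm $\le\vp$, and one cuts down to $\cl Z=i(A)+\beta(B)$. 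Applying this with $f=g$ and taking $\hat E_1=\cl Z$, $\phi=\beta k$, $\tilde u=i$: both $\phi$ and $\tilde u$ are completely isometric, and $(\tilde u)_{|S}-\phi u=(i-\beta g)_{|S}$, so Definition \ref{d23.12} holds with $\d'=0$ and $\d(\vp)=\vp$.

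Finally, for $\cl E^{[\infty]}_{\min}=\ovl{\cup_N\cl E^{[N]}_{\min}}$: for $N_1\le N_2$ the corner embedding $M_{N_1}\hookrightarrow M_{N_2}$ induces a complete isometry $\ell_\infty(M_{N_1})\hookrightarrow\ell_\infty(M_{N_2})$, so $\cl E^{[N_1]}_{\min}\subset\cl E^{[N_2]}_{\min}$ and any two spaces in $\cup_N\cl E^{[N]}_{\min}$ lie in a single $\cl E^{[N]}_{\min}$; hence $\cup_N\cl E^{[N]}_{\min}$ is perturbative tightly Gurarii by the two preceding paragraphs, and by Remarks \ref{r12/3} and \ref{dr2} its $d_{cb}$-closure $\cl E^{[\infty]}_{\min}$ is perturbative Gurarii. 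I expect the main obstacle to be simply the realization that $\cl E^{[N]}_{\min}$ is \emph{not} a class of injective spaces, so Lemma \ref{18/3} does not apply to it directly; the remedy is to run its proof, and that of Lemma \ref{1}, inside the self-absorbing injective algebra $\ell_\infty(M_N)$ and to cut down at the end of each construction to the finite dimensional subspace generated by the images — after which all estimates are the bookkeeping already done in those lemmas. (Alternatively one can quote Lemma \ref{18/3} for the class $\cl E(M_N)$ of finite $\oplus_\infty$-sums of copies of $M_N$ from Remark \ref{mex} together with the identification $\cl E^{[N]}_{\min}=\ovl{\cl{S}\cl E(M_N)}$; but checking that the latter class is $d_{cb}$-closed needs a short ultraproduct argument — $\ell_\infty(M_N)^{\cl U}\cong C(K)\otimes_{\min}M_N$ since $M_N$ is finite dimensional — so I prefer the direct route above.)
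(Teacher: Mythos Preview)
Your proof is correct. The paper's own argument is precisely the parenthetical alternative you sketch at the end: it takes $\cl E[M_N]$, the class of finite $\oplus_\infty$-sums of copies of $M_N$ (each of which is an injective f.d.o.s., so Lemma~\ref{18/3} applies directly), observes that $\cl E^{[N]}_{\min}=\ovl{\cl{S E}[M_N]}$, and invokes Remark~\ref{r12/3}; the passage to $\cl E^{[\infty]}_{\min}$ is exactly your monotonicity-plus-closure step.

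Your ``direct route''---running the injectivity manoeuvre of Lemma~\ref{18/3} and a variant of Lemma~\ref{1} inside the infinite self-absorbing injective algebra $\ell_\infty(M_N)$, then cutting down at the end to the finite-dimensional span of the images---is a legitimate reorganization of the same idea, and it has the minor bonus of yielding $\d'=0$, $\d(\vp)=\vp$ explicitly without detouring through an auxiliary class. Your worry about the $d_{cb}$-closedness of $\cl E^{[N]}_{\min}$ is slightly overcautious: the paper simply asserts the identity as ``easy to check'', and indeed one can avoid ultraproducts by noting that $E\in\cl E^{[N]}_{\min}$ is equivalent to $E=MIN_N(E[N])$, which is manifestly a $d_{cb}$-closed condition (alternatively, embed a $d_{cb}$-approximating sequence $F_n\in\cl{S E}[M_N]$ into disjoint blocks of $\ell_\infty(M_N)$ and take the diagonal). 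The paper's version is shorter because it reuses Lemma~\ref{18/3} and Remark~\ref{r12/3} as black boxes; yours is longer but self-contained.
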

\begin{proof} Let $ \cl E[M_N]$ be as in Remark \ref{mex}
 the perturbative Gurarii class  formed 
of all the $\oplus_\infty$-direct sums of finitely many copies of $M_N$.
It is easy to check that $\cl E^{[N]}_{\min}=\ovl{ \cl{S  E}[M_N]  }$.
By Remark \ref{r12/3} the class $\cl E^{[N]}_{\min}$ is   perturbative Gurarii.
Since $ (\cl E^{[N]}_{\min})$ is  monotone increasing its union
   ${\cup_N \cl E^{[N]}_{\min}}$ is still perturbative Gurarii, and 
   by Remark \ref{r12/3} so is its closure $\cl E^{[\infty]}_{\min}$.
\end{proof}

 Let $\G_N$  and $\G_\infty$ be respectively the  unique $\cl E^{[N]}_{\min}$-Gurarii space and the unique $\cl E^{[\infty]}_{\min}$-Gurarii space
 (see Theorems \ref{t4/2p}
    and   \ref{T1}).

Note that when $N=1$ the space $\G_1$ is just the classical
     Gurarii space equipped with its minimal o.s. structure (see Remark \ref{bex'}). 
 The space $\G_\infty$ is 
  Oikhberg's exact Gurarii space from \cite{Oi},
  i.e. the unique $\cl E$-Gurarii space when $\cl E$ is the class of $1$-exact f.d.o.s.

    \begin{rem}\label{r18/3}    The class $\cl E^{[N]}_{\min}$ 
    (resp. $\cl E^{[N]}_{\max}$) is the class of all f.d. $M_N$-minimal
    (resp. $M_N$-maximal)  o.s. in the sense of Lehner
    \cite{Le}. 
When $N=1$   these are just the f.d. minimal (resp. maximal) o.s. 
Note that for simplicity we previously denoted $\cl E^{[1]}_{\max}$ by $\cl E_{\max}$.

 The notion of $M_N$-space in the sense of Lehner
    \cite{Le} provides a very clear picture of the properties
    of $\bb G_N$, as an intermediate object between
    the classical Gurarii space and Oikhberg's exact variant.
    To describe this we first give  a quick review of   $M_N$-space theory.

 By an $M_N$-space we mean 
    (following \cite{Le}) a vector space $V$ equipped with
    a norm $\|\cdot \|_{(N)}$ for which there is
    an embedding $V \subset B(H)$ with which 
    $\|\cdot \|_{(N)}$ coincides with the norm
    induced by $M_N(B(H))$. The morphisms $u: V_1\to V_2$
    between $M_N$-spaces are now the maps such that
    $Id_{M_N} \otimes u: M_N(V_1) \to M_N(V_2)$ is bounded
    and the cb-norm is now replaced by
    $$\|u\|_N:=\| Id_{M_N} \otimes u: M_N(V_1) \to M_N(V_2)\|.$$
    A map with $\|u\|_N\le 1$ 
    (resp. $\|u\|_N\le 1$ and $\|u^{-1}\|_N\le 1$ )
    is called $N$-contractive (resp. $N$-isometric).
    
    Just like operator spaces, $M_N$-spaces admit a duality,
     as well as natural notions of quotient space,  of direct sums
     $\oplus_\infty$ and $\oplus_1$ and more.\\
    When $N=1$, we recover the usual Banach spaces,
    the latter notions become the usual ones and
     $\|u\|_N=\|u\|$. \\When $N=\infty$ (where we replace $M_N$ by the algebra of compact operators on $\ell_2$) we recover operator space theory.

    Given an $M_N$-space $V$, there are operator space structures
    $MIN_N$ and $MAX_N$ on $V$, with associated o.s. denoted by 
    $MIN_N(V)$ and $MAX_N(V)$
    such that the maps
    $MAX_N(V) \to V \to MIN_N(V)$ are $N$-isometric
    and these are extremal in the sense that
    for any o.s. $E,F$, 
    for any $u: V \to E$ and any $v: F \to V$ we have
    $$  \|u\|_N= \| u:  MAX_N(V) \to E\|_N=  \| u:  MAX_N(V) \to E\|_{cb},$$
    $$  \|v\|_N= \| v:  F  \to MIN_N(V)  \|_N=  \| v:  F  \to MIN_N(V) \|_{cb}.$$
    \end{rem}

    Given an o.s. $E\subset B(H)$, 
    the norm on $M_N(E)$ determines the structure of an $M_N$-space
    on $E$; let us denote by $E[N]$ the latter $M_N$-space. Note
    that $M_n(E[N])=M_n(E)$ for all $n\le N$ and 
    when passing from $E$ to $E[N]$  we roughly ``forget" these norms for $n>N$.
  We  then  define a new o.s. $E^{[N]}$ by setting
   $$E^{[N]}= MIN_{N} (E[N]) .$$ 
    Then $M_n(E^{[N]})=M_n(E)$ for all $n\le N$, 
    and for any o.s. $F$ and any $u:F \to E$ we have
    $\|u:F \to E\|_{N}= \|u:F \to E^{[N]}\|_{cb}$. In particular
    the identity map satisfies $\| Id: E  \to E^{[N]}\|_{cb} =1$.
    This means  $\| \cdot\|_{M_n(E^{[N]})} \le \| \cdot\|_{M_n(E )}$
    for all  $n>N$ and  the norms on $M_n(E^{[N]})$ for $n>N$ are  the largest possible ones (as a sequence satisfying the o.s. axioms)
    satisfying the preceding two properties.
    One way to realize an embeding $E^{[N]} \subset B(\cl H)$ is to consider
      the embedding  $\Phi_N$
    defined below. Let $I_N=\{u: E \to M_N \mid \|u\|_{cb}\le 1\}$. 
    Note that by a well known
    lemma (due to Roger Smith, see e.g. \cite[p. 26]{P4}) $\|u\|_N=\|u\|_{cb}$ for all $u\in I_N$.
    Let $\Phi_N : E \to \ell_\infty( I_N; M_N) $ be the mapping defined by
    $$\forall x\in E\quad \Phi_N(x)=( u(x))_{u\in I_N}.$$
    Then $\Phi_N$ defines a completely isometric embedding of
    $E^{[N]}$  in $\ell_\infty( I_N; M_N) $ (and a fortiori in some $B(\cl H)$).
   
    This shows that
     $$E\in \cl E^{[N]}_{\min}\Leftrightarrow E= E^{[N]}.$$ 
     Note that if $F \in \cl E^{[N]}_{\min}$,  any invertible $u:F \to E$
     satisfies $$\|u^{-1}_{|u(F)}: u(F) \to F\|_{cb}= \|u^{-1}_{|u(F)}: u(F) \to F\|_{N}.$$    
    With these facts the following is easy to check.
    \begin{lem}\label{l12} Let $S\subset L$, $u: S \to E$ and $E_1$
    be as in Lemma \ref{l1}. Fix $N\ge 1$. 
    If the spaces $L$ and $E$ both embed (completely isometrically) in $\ell_\infty( M_N)$, then the space   $E_1^{[N]}$
    satisfies all the properties of $E_1$
    listed in Lemma \ref{l1}.
  \end{lem}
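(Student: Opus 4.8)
The plan is to verify directly that $E_1^{[N]}$, together with the evident modifications of $j$ and $\tilde u$, has the four properties listed in Lemma~\ref{l1}. Let $\iota_N\colon E_1\to E_1^{[N]}$ denote the formal identity; it is a complete contraction ($\|Id\colon E_1\to E_1^{[N]}\|_{cb}=1$) that is completely isometric at matrix levels $n\le N$, since $M_n(E_1^{[N]})=M_n(E_1)$ there. Set $j^{[N]}=\iota_N\circ j$ and $\tilde u^{[N]}=\iota_N\circ\tilde u$. The formal assertions are then immediate: $j^{[N]}$ is injective, $\tilde u^{[N]}_{|S}=\iota_N\tilde u_{|S}=\iota_N ju=j^{[N]}u$, and $\|\tilde u^{[N]}\|_{cb}\le\|\iota_N\|_{cb}\|\tilde u\|_{cb}\le1$ by Lemma~\ref{l1}. (Note also that $E_1^{[N]}=MIN_N(E_1[N])$ lies in $\cl E^{[N]}_{\min}$, via the embedding $\Phi_N$, so it is a legitimate object of the class under consideration.)

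For the metric statements I would exploit the concrete description of the norms coming from $\Phi_N$: for $z\in M_n(E_1)$,
$$\|z\|_{M_n(E_1^{[N]})}=\sup\bigl\{\,\|(Id_{M_n}\otimes v)(z)\|_{M_n(M_N)}\ :\ v\colon E_1\to M_N,\ \|v\|_{cb}\le1\,\bigr\},$$
and, crucially, the hypothesis that $L$ and $E$ embed completely isometrically in $\ell_\infty(M_N)$ means exactly $L=L^{[N]}$ and $E=E^{[N]}$, so the norms on $M_n(L)$ and on $M_n(E)$ are likewise recovered as suprema over cb-contractions into $M_N$. Now assume $\|u\|_{cb}\le1$, so that by Lemma~\ref{l1} the maps $j$ and $\tilde u$ are completely isometric and, if $u$ is also injective, $\tilde u$ is injective with $\|\tilde u^{-1}_{|\tilde u(L)}\|_{cb}=c$ where $c=\|u^{-1}_{|u(S)}\|_{cb}$. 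Given any cb-contraction $w\colon L\to M_N$, the map $w\circ\tilde u^{-1}_{|\tilde u(L)}\colon\tilde u(L)\to M_N$ has cb norm $\le c$, and by injectivity of $M_N$ it extends to some $v\colon E_1\to M_N$ with $\|v\|_{cb}\le c$ and $v\tilde u=w$. Feeding $v/c$ into the displayed formula for $\|(Id_{M_n}\otimes\tilde u^{[N]})(x)\|_{M_n(E_1^{[N]})}$ (for $x\in M_n(L)$) and taking the supremum over all such $w$ yields $\|(Id_{M_n}\otimes\tilde u^{[N]})(x)\|_{M_n(E_1^{[N]})}\ge c^{-1}\|x\|_{M_n(L)}$, i.e.\ $\|(\tilde u^{[N]})^{-1}_{|\tilde u^{[N]}(L)}\|_{cb}\le c$; the reverse inequality is the trivial half, as in Remark~\ref{r11/3}, since $\tilde u^{[N]}$ extends $u$ and $j^{[N]}$ is completely isometric (see below). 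This gives \eqref{e1} for $\tilde u^{[N]}$.

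The same two-step pattern handles the remaining assertions: the ``$\le$'' direction is automatic because $v\tilde u$ (resp.\ $vj$) is a cb-contraction into $M_N$, hence one of the maps over which the supremum defining $\|\cdot\|_{M_n(L)}$ (resp.\ $\|\cdot\|_{M_n(E)}$) is taken; and the ``$\ge$'' direction follows by extending a cb-contraction $L\to M_N$ along $\tilde u$ (resp.\ $E\to M_N$ along $j$) to all of $E_1$ using injectivity of $M_N$. Taking $c=1$ in the argument above, applied to $\tilde u$ this shows $\tilde u^{[N]}$ is completely isometric whenever $u$ is, and applied to $j$ it shows $j^{[N]}$ is completely isometric whenever $\|u\|_{cb}\le1$. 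The only point where the hypothesis on $L$ and $E$ is actually used is in writing their matrix norms as suprema over cb-contractions into $M_N$; everything else is bookkeeping with $\Phi_N$ and the injectivity of $M_N$. I expect the only real difficulty to be notational, namely keeping the roles of $S\subset L$, $E$, $E_1$ and $M_N$ and the successive extension steps straight while writing the argument out cleanly.
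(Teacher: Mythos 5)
Your verification is correct and follows exactly the route the paper intends: it sets up the embedding $\Phi_N$ and the identity $E=E^{[N]}$, $L=L^{[N]}$ precisely so that the matrix norms become suprema over cb-contractions into $M_N$, and then leaves the check to the reader, which you carry out via the injectivity (Wittstock extension property) of $M_N$. The paper gives no written proof of this lemma, and your argument supplies the intended one.
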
 
  \begin{pro} We have  
  $\G_N\subset \G_{N+1}$ (completely isometrically) for all $N\ge 1$
 and  $\G_\infty=\ovl{\cup \G_N}$.
  \end{pro}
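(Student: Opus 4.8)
The plan is to deduce both assertions from the existence-with-universality statement of Theorem~\ref{t4/2p} together with the uniqueness Corollary~\ref{cd1}, applied to the three classes $\cl E^{[N]}_{\min}\subset\cl E^{[N+1]}_{\min}\subset\cl E^{[\infty]}_{\min}$, all of which are separable and perturbative Gurarii by Lemma~\ref{21/3}. First I would record the elementary inclusion $\cl E^{[N]}_{\min}\subset\cl E^{[N+1]}_{\min}$: since $M_N$ sits completely isometrically as the upper-left corner of $M_{N+1}$, a f.d.\ operator subspace of $\ell_\infty(M_N)$ is completely isometric to a f.d.\ subspace of $\ell_\infty(M_{N+1})$. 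Consequently, any o.s.\ that locally embeds in $\cl E^{[N]}_{\min}$ also locally embeds in $\cl E^{[N+1]}_{\min}$ and in $\cl E^{[\infty]}_{\min}=\ovl{\cup_N\cl E^{[N]}_{\min}}$.

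For the first assertion: $\G_N$ is separable and locally embeds in $\cl E^{[N]}_{\min}$, hence in $\cl E^{[N+1]}_{\min}$. Applying Theorem~\ref{t4/2p} with $Y=\G_N$ and the (separable, loosely Gurarii) class $\cl E^{[N+1]}_{\min}$ yields a separable $\cl E^{[N+1]}_{\min}$-Gurarii space containing $\G_N$ completely isometrically; by Corollary~\ref{cd1} this space must be $\G_{N+1}$. So $\G_N\subset\G_{N+1}$ completely isometrically. I would fix such embeddings once and for all and set $\G=\ovl{\cup_N\G_N}$ for the resulting inductive limit (Remark~\ref{ind} with all $\d_n=0$), a separable o.s.\ with each $\G_N\subset\G$ completely isometrically. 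It then suffices to prove that $\G$ is an $\cl E^{[\infty]}_{\min}$-Gurarii space, for then $\G\simeq\G_\infty$ completely isometrically by Corollary~\ref{cd1}, which is exactly the identity $\G_\infty=\ovl{\cup\G_N}$.

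That $\G$ locally embeds in $\cl E^{[\infty]}_{\min}$ is immediate from Remark~\ref{dr4}, since each $\G_N$ does. The $\cl E^{[\infty]}_{\min}$-Gurarii property of $\G$ is the substantive point. Given $S\subset L$ with $L\in\cl E^{[\infty]}_{\min}$ (i.e.\ $L$ is $1$-exact), an injective $u:S\to\G$, and $\vp>0$, I would fix a small $\vp'>0$, use $1$-exactness of $L$ to choose an integer $N_0$ and a $\vp'$-isomorphism $\theta:L_0\to L$ with $L_0\in\cl E^{[N_0]}_{\min}$, and then, since $u(S)$ is finite dimensional and $\cup_n\G_n$ is dense in $\G$, use perturbation (Remark~\ref{pertu}) to reduce to the case $u(S)\subset\G_M$ for some $M$. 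With $N=\max(N_0,M)$ we have $L_0\in\cl E^{[N]}_{\min}$ and $u$ maps into $\G_N$, so I would apply the $\cl E^{[N]}_{\min}$-Gurarii property of $\G_N$ through the $\vp'$-isomorphism $\theta$ — this is exactly the generalization provided by Lemma~\ref{18.12} — to produce $\tilde u:L\to\G_N\subset\G$ extending $u$ with $D(\tilde u)$ at most $D(u)$ times a factor depending only on $\vp'$ and the Gurarii loss inside $\G_N$, and tending to $1$ as $\vp'\to0$. Choosing $\vp'$ small enough gives $D(\tilde u)\le(1+\vp)D(u)$, as required.

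The \textbf{main obstacle} is precisely this last step. Because $L$ is only $1$-exact and need not be $(1+\vp)$-close to an $M_N$-minimal space for any \emph{fixed} $N$, one cannot solve the lifting problem inside a single $\G_N$ chosen in advance; $N$ must be allowed to grow with $\vp'$, $u(S)$ must be pushed into $\G_N$ by a perturbation of the identity of $\G$, and the \emph{exact} Gurarii extension property of $\G_N$ must be transported across a $(1+\vp')$-isomorphism $\theta$, with the various distortions $D(w)$, $D(\theta)$ and the internal Gurarii loss all absorbed into the single factor $(1+\vp)$ of Definition~\ref{d1}. Everything else is bookkeeping of the kind already carried out in the proof of Theorem~\ref{t4/2p}.
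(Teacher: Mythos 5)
Your argument is correct and follows essentially the same route as the paper: both parts are obtained by applying Theorem \ref{t4/2p} with $Y=\G_N$ to the larger class and then invoking the uniqueness of Corollary \ref{cd1}, with the identification $\G_\infty=\ovl{\cup\G_N}$ reduced to checking that the union is an $\cl E^{[\infty]}_{\min}$-Gurarii space. The only difference is one of emphasis: the paper flags (via Lemma \ref{l12} and the $M_N$-analogue of $\oplus_1$) why Theorem \ref{t4/2p} is applicable to $\cl E^{[N]}_{\min}$ even though this class is not a league, and leaves the final verification as "easy to check," whereas you take the applicability from Lemma \ref{21/3} and instead spell out that verification (approximating $L$ by some $L_0\in\cl E^{[N_0]}_{\min}$, pushing $u(S)$ into some $\G_M$, and transporting the extension across $\theta$ via Lemma \ref{18.12}).
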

 
   \begin{proof}  
 Using Lemma \ref{l12} one can easily
  show that    Theorem \ref{t4/2p} is valid for    $\cl E= \cl E^{[N]}_{\min}$.
  Although the latter 
  is not stable
  by the o.s. version of $\oplus_1$,  it is stable
  by the $M_N$-space analogue of $\oplus_1$,
  which can be defined
  as in \eqref{de9} but with the supremum
  over all $u$'s with $\|u\|_N\le 1$.
   This explains transparently why the case
  of $\cl E=\cl E^{[N]}_{\min}$ is entirely analogous to those
  included in Theorem \ref{t4/2p}.
  Let $\G_N$ be the  $\cl E^{[N]}_{\min}$-Gurarii space.
  Since $\cl E^{[N]}\subset \cl E^{[N+1]}$, applying
  the $\cl E^{[N+1]}$-variant of  Theorem \ref{t4/2p}
  with $Y=\G_N$ and $X=\G_{N+1}$,
 we have $\G_N\subset \G_{N+1}$ (completely isometrically)
 and since
 $\cl E^{[\infty]}_{\min}=\ovl{\cup_N \cl E^{[N]}_{\min}}$
  it is easy to check that $ \ovl{\cup \G_N}$
 is an $\cl E^{[\infty]}_{\min}$-Gurarii space, and hence by uniqueness
 $\G_\infty=\ovl{\cup \G_N}$.
        \end{proof}

     The next statement was already stated in \cite[Remark 4.9]{L0} as a consequence
    of \cite[Lemma 3.17]{BY}.
    
 \begin{lem}  
  There are integers $(k(n))$ and   an increasing sequence $E_1\subset E_2\subset  \cdots$
  of f.d. subspaces of $\G_\infty$ with dense union
  such that $E_n\simeq M_{k(n)}$  completely isometrically.
   \end{lem}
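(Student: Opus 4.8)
The plan is to obtain $\G_\infty$ from the second part of Theorem \ref{t4/2p}, applied not to the class of all $1$-exact spaces but to the much smaller class $\cl M=\{M_k\mid k\ge 1\}$ of full matrix algebras (with the standing convention that any operator space completely isometric to some $M_k$ counts as a member of $\cl M$). By Remark \ref{bex2} the class $\cl M$ is Gurarii, and it is plainly separable, so Theorem \ref{t4/2p} (with the auxiliary space $Y=\{0\}$) produces a separable $\cl M$-Gurarii operator space $X$ of the form $X=\ovl{\cup_n E_n}$, where $E_1\subset E_2\subset\cdots$ is an increasing chain of finite dimensional subspaces of $X$ with dense union and each $E_n$ belongs to $\cl M$, i.e. $E_n$ is completely isometric to $M_{k(n)}$ for suitable integers $k(n)$. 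It matters here that $\cl M$ is Gurarii, not merely loosely Gurarii: in that case the inductive limit built in the proof of Theorem \ref{t4/2p} uses \emph{completely isometric} connecting maps, so the building blocks $E_n$ sit inside $X$ as honest copies of matrix algebras and each embeds completely isometrically in the next.

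The remaining step is to identify $X$ with $\G_\infty$. The key observation is that the $d_{cb}$-closure of the class of all subspaces of matrix algebras is, by the definition of $1$-exactness, precisely the class of finite dimensional $1$-exact operator spaces, namely $\cl E^{[\infty]}_{\min}$. Hence, by Remark \ref{dr2}, the $\cl M$-Gurarii property coincides with the $\cl E^{[\infty]}_{\min}$-Gurarii property, so $X$ enjoys the latter. Moreover every $E_n\simeq M_{k(n)}$ is $1$-exact, so $X=\ovl{\cup E_n}$ is $1$-exact (Remark \ref{dr4}), i.e. $X$ locally embeds in $\cl E^{[\infty]}_{\min}$. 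Being also separable, $X$ is then an $\cl E^{[\infty]}_{\min}$-Gurarii space; since $\cl E^{[\infty]}_{\min}$ is a perturbative Gurarii class (Lemma \ref{21/3}), the uniqueness statement of Corollary \ref{cd1} forces $X$ to be completely isometrically isomorphic to $\G_\infty$ (an identification also recorded in Remark \ref{bex2}). Transporting the chain $(E_n)$ through any surjective complete isometry $X\to\G_\infty$ then yields an increasing sequence of finite dimensional subspaces of $\G_\infty$ with dense union, each completely isometric to some $M_{k(n)}$, which is exactly the assertion of the lemma.

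I do not expect a genuine obstacle, since the argument only assembles results already in hand. The one point deserving care — and worth a sentence or two in the final write-up — is verifying that Theorem \ref{t4/2p} really furnishes an increasing chain of subspaces, each \emph{exactly} (not merely approximately) completely isometric to a member of $\cl M$; this is where one uses that $\cl M$ is Gurarii rather than only loosely Gurarii, so that the connecting maps $\phi_n$ of the inductive limit may be chosen completely isometric and the canonical maps $j_n$ of Remark \ref{ind} become complete isometries. A second, routine point to make explicit is the identity ``$d_{cb}$-closure of the subspaces of matrix algebras $=\cl E^{[\infty]}_{\min}$'', which is precisely what lets Remark \ref{dr2} bridge the $\cl M$-Gurarii property and the $\cl E^{[\infty]}_{\min}$-Gurarii property.
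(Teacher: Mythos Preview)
Your proposal is correct and follows essentially the same route as the paper's own proof: build an $\{M_n\}$-Gurarii space via Theorem \ref{t4/2p}, observe that $\ovl{\cl S\{M_n\}}=\cl E^{[\infty]}_{\min}$ so that the space is also $\cl E^{[\infty]}_{\min}$-Gurarii, and invoke uniqueness to identify it with $\G_\infty$. Your write-up is in fact a bit more careful than the paper's, in that you make explicit both the local-embedding half of Definition \ref{d2} and the reason why the second clause of Theorem \ref{t4/2p} (Gurarii rather than loosely Gurarii) yields genuine completely isometric copies of the $M_{k(n)}$ inside $X$.
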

    \begin{proof}
  Indeed, since $\cl E=\{M_n\mid n\ge 1\}$ is Gurarii, there is an
  $\cl E$-Gurarii space $X$. 
  By Theorem \ref{t4/2p} and Remark \ref{bex} (ii) we may assume that 
  there are integers $(k(n))$ and   an increasing sequence $E_1\subset E_2\subset  \cdots$
  of f.d. subspaces of $X$ with dense union
  such that $E_n\simeq M_{k(n)}$  completely isometrically.
  By Remark \ref{dr2}  the space $X$
  is automatically an $\ovl{\cl {S E}}$-Gurarii space.
  But since $\ovl{\cl{S E}}=\cl E^{[\infty]}_{\min}$ (see Remark \ref{dr2'}), we must have $X= \G_\infty$
  by the uniqueness of $\G_\infty$.
  \end{proof}

     While   the   presentation in this section is arranged to illustrate our framework,
      in essence  the proofs do not differ from  Oikhberg's ones
     from \cite{Oi}, updated with the contributions
     in \cite{Kus} and \cite{L1}.

     \medskip
    
    \n\textbf{Acknowledgement.} I am grateful to Jean Roydor
    for stimulating conversations.


\begin{thebibliography}{JP95}
  
\bibitem{TBA}  T. B. Andersen,  Linear extensions, projections, and split faces. \emph{J. Functional Analysis} {\bf 17} (1974), 161--173. 
  
    \bibitem{ACGM}
  A. Avil\'es, F. Cabello, J. Castillo, M. Gonz\`alez, Y. Moreno, Banach spaces of universal disposition, \emph{J. Funct. Anal.} {\bf 261} (2011) 2347--2361.
  
  \bibitem{BY} Ben Yaacov, Fra\"\i ss\'e limits of metric structures, \emph{J. Symbolic Logic} {\bf 80}   (2015) 100--115.
  
  
    \bibitem{BLM}  D.  P.  Blecher  and  C.  Le  Merdy,  \emph{Operator  algebras  and  their  modules  :  an  operator  space  approach},  Oxford  Univ.  Press,  Oxford  (2004).
    
     \bibitem{BP}  D.  P.  Blecher  and V. Paulsen,   Tensor products of operator spaces.
 J. Funct. Anal. 99 (1991) 262--292.

     
    \bibitem{BoP} J. Bourgain and G. Pisier, A construction of ${\cal L}_\infty$-spaces and
related Banach spaces,  \emph{Bol. Soc. Bras. Mat.} {\bf 14} (1983) 109--123.
    
    \bibitem{[CE3]}  M.D.    Choi  and  E.  Effros,  Injectivity
and    operator
spaces.  \emph{  J.  Funct.  Anal.  }    {\bf  24}  (1977),  156--209.

       \bibitem{ER}  E.  Effros  and  Z.J.  Ruan,  {\it  Operator  Spaces}.
  Oxford  Univ.  Press,  Oxford,  2000.
  
   \bibitem{FLAMT} V. Ferenczi, J. Lopez-Abad, B. Mbombo, S. Todorcevic,
Amalgamation and Ramsey properties of Lp spaces,
\emph{Adv. in Math.},
{\bf 369}
(2020),
107--190.
  
   \bibitem{FR} Y. Friedman and B. Russo,   Solution of the contractive projection problem. \emph{J. Funct. Anal.} {\bf 60} (1985), 56--79.
   
   \bibitem{FR2} Y. Friedman and B. Russo,   Conditional expectation without order, \emph{Pacific J. Math. } {\bf 115} (1984),   351--360. 
   
     \bibitem{GK} J. Garbuli\'nska and W. Kubi\'s, Remarks on Gurarii spaces, \emph{Extracta Math.} {\bf 26} (2011),  235--269.
  
 \bibitem{GoL}  I. Goldbring and M. Lupini,  Model-theoretic aspects of the Gurarij operator space, https://arxiv.org/abs/1501.04332v3

 \bibitem{GL} Y. Gordon and D. Lewis, Absolutely summing operators and local unconditional structures.
\emph{Acta Math.} {\bf 133} (1974), 27--48. 
 
\bibitem{Gu}   V. I. Gurarij, Space of universal disposition, isotopic spaces and the Mazur problem on rotations of Banach spaces, Sibirsk. Mat. ´.7, 1002--1013 (1966).
   
     \bibitem{JO} W. Johnson and T. Oikhberg,
   Separable lifting property and extensions of local reflexivity,
   \emph{Illinois J. Math. } {\bf 45} (2001), 123--137.
   


\bibitem{JLM}  M.    Junge  and  C.  Le  Merdy,  Factorization  through  matrix  spaces  for  finite  rank  operators  between  $C^*$-algebras,\emph{Duke  Math.  J.}    {\bf  100},      (1999),  299--319.

 \bibitem{JP}  M.  Junge  and  G.  Pisier,    Bilinear  forms  on  exact  operator  spaces  and  $B(H)\otimes  B(H)$,  \emph{Geom.  Funct.  Anal.}  {\bf  5}  (1995),    329--363.

 \bibitem{KR} 
 M. Kaur and Z-J. Ruan,  
Local properties of ternary rings of operators and their linking $C^*$-algebras,
\emph{J. Funct. Anal.} {\bf 195} (2002),   262--305.

  

 \bibitem{Kir}  E.    Kirchberg,    On  nonsemisplit  extensions,  tensor  products  and  exactness  of  group  $C^*$-algebras,
\emph{Invent.  Math.}  {\bf  112}  (1993),  449--489.


\bibitem{Kiuhf}  E.    Kirchberg,  Commutants  of
unitaries  in  UHF  algebras  and  functorial  properties  of
exactness,  \emph{J.
  reine  angew.  Math.}  {\bf    452}  (1994),
39--77.  

\bibitem{Kcar1}
 E. Kirchberg, On restricted perturbations in inverse images and a description of normalizer
algebras in $C^*$-algebras, \emph{J. Funct. Anal. } {\bf 129} (1995), 1--34.



 \bibitem{Kis} S. Kisliakov,   Spaces with a ``small" annihilator. (Russian)  
 \emph{Zap. Nau\v cn. Sem. Leningrad. Otdel. Mat. Inst. Steklov. (LOMI)} {\bf 65} (1976), 192--195, 209.
 
  \bibitem{Kus} W. Kubi\'s and  S. Solecki,  A proof of uniqueness of the Gurarii space, \emph{Israel J. Math.} {\bf 195} (2013),  449--456.
  
 
  
   \bibitem{LL} A. Lazar and   J.  Lindenstrauss,    Banach spaces whose duals are $L_1$-spaces and their representing matrices, \emph{Acta Math.} {\bf 126} (1971), 165--193. 
 
  
 \bibitem{Le}  F. Lehner, $M_n$-espaces, sommes d'unitaires et analyse harmonique sur le groupe libre, Ph.D. thesis, Universit\'e Paris VI, 1997.
 https://www.math.tugraz.at/~lehner/dvi/these2.pdf

 
  
   \bibitem{L0} M. Lupini, Uniqueness, universality, and homogeneity of the noncommutative Gurarij space
  \emph{Adv. Math.} {\bf  298} (2016), 286--324.
  
 \bibitem{L1} M. Lupini, Fra\"\i ss\'e limits in functional analysis, \emph{Adv. Math.} {\bf  338} (2018), 93--174. 
 
 \bibitem{L2} M. Lupini,  A universal nuclear operator system, arXiv:1412.0281 (2014).

\bibitem{Lus} W. Lusky,  The Gurarij spaces are unique, \emph{Arch. Math. (Basel) } {\bf 27} (1976),  627--635.
 
 \bibitem{[O3]} T. Oikhberg, Subspaces of
maximal operator spaces,  \emph{Integral Equations Operator Theory }  {\bf 48} (2004), 81--102. 


\bibitem{Oi}  T.  Oikhberg,  The non-commutative Gurarii space,
 \emph{Arch. Math. (Basel)} {\bf 86} (2006),   356--364.
  
  \bibitem{Ozllp} N. Ozawa, On the lifting property for universal $C^*$-algebras of operator spaces, \emph{J. Operator Theory} {\bf 46} (2001), no. 3, suppl., 579--591.
  
   \bibitem{Ped} G. Pedersen, Pullback and pushout constructions in $C^*$-algebra theory. J. Funct. Anal. 167 (1999), 243--344. 
  
    \bibitem{PP}  G.    Pisier,  Some results on Banach spaces without local unconditional structure
\emph{Compositio Math.} {\bf 37} (1978),  3--19. 
    
    
  \bibitem{P0}  G.    Pisier,    Counterexamples    to  a  conjecture  of
Grothendieck  \emph{Acta  Math.}  {\bf  151},  (1983),  181--209.
  
  \bibitem{87.b.}   G.    Pisier,   Noncommutative
 vector valued $L_p$-spaces and completely
$p$-summing maps,  \emph{Ast\'erisque (Soc. Math. France)} {\bf 247} (1998) 1--131.

\bibitem{P4}    
G.  Pisier,    \emph{Introduction  to  operator  space  theory},    Cambridge  University  Press,  Cambridge,  2003.  

\bibitem{P6}  G.  Pisier,    \emph{Tensor  products  of  $C^*$-algebras  and  operator  spaces,  
  The  Connes-Kirchberg  problem},    Cambridge  University  Press,  2020.  \\
Available  at:    \url{https://www.math.tamu.edu/~pisier/TPCOS.pdf}

   \bibitem{P7} G. Pisier, {A  non-nuclear  $C^*$-algebra  with  the  Weak  Expectation  Property      and  the  Local  Lifting  Property} Invent. Math.  222 (2020) 513--544. 
   
   \bibitem{157} G.  Pisier,   On the    Lifting  Property for $C^*$-algebras, J. non-commutative geometry, to appear.
   
   
  \bibitem{Sz} A. Szankowski, $B(H)$ does not have the approximation property. \emph{Acta Math. } {\bf 147} (1981),   89--108. 
  
  \bibitem{Y}  M. A. Youngson,  Completely contractive projections on $C^*$-algebras, \emph{Quart. J. Math. Oxford Ser. (2)} {\bf 34} (1983),  507--511.
  
  \end{thebibliography}
  \end{document}